\let\@wraptoccontribs\wraptoccontribs
\theoremstyle{plain}
\newtheorem{theorem}{Theorem}[section]
\newtheorem{proposition}[theorem]{Proposition}
\newtheorem{lemma}[theorem]{Lemma}
\newtheorem{corollary}[theorem]{Corollary}
\theoremstyle{definition}
\newtheorem{definition}[theorem]{Definition}
\newtheorem{remark}[theorem]{Remark}
\newtheorem{example}[theorem]{Example}
\numberwithin{equation}{section}
\newcommand\E{\mathbb{E}}
\newcommand\R{\mathbb{R}}
\newcommand\C{\mathbb{C}}
\newcommand\N{\mathbb{N}}
\newcommand\A{\mathcal{A}}
\newcommand\semicirc{{\mu_{\mathrm{sc}}}}
\newcommand\eps{\varepsilon}
\DeclareMathOperator{\Log}{Log}
\DeclareMathOperator{\cosec}{cosec}
\DeclareMathOperator{\tr}{tr}
\DeclareMathOperator{\Tr}{Tr}
\DeclareMathOperator{\sa}{sa}
\DeclareMathOperator{\Var}{Var}
\DeclareMathOperator{\diag}{diag}
\DeclareMathOperator{\op}{op}
\DeclarePairedDelimiter{\ip}{\langle}{\rangle}
\DeclarePairedDelimiter{\norm}{\lVert}{\rVert}
\begin{document}

\title{Fractional free convolution powers}

\author{Dimitri Shlyakhtenko}
\address{University of California, Los Angeles, Department of Mathematics, Los Angeles, CA 90095-1555.}
\email{shlyakht@math.ucla.edu}

\author{Terence Tao}
\address{University of California, Los Angeles, Department of Mathematics, Los Angeles, CA 90095-1555.}
\email{tao@math.ucla.edu}

\contrib[with an appendix by]{David Jekel}
\address{University of California, San Diego, Department of Mathematics, La Jolla, CA 92093-0112}
\email{djekel@ucsd.edu}

\subjclass[2010]{46L54, 15B52}

\begin{abstract}
The extension $k \mapsto \mu^{\boxplus k}$ of the concept of a free convolution power to the case of non-integer $k \geq 1$ was introduced by Bercovici-Voiculescu and Nica-Speicher, and related to the minor process in random matrix theory.  In this paper we give two proofs of the monotonicity of the free entropy and free Fisher information of the (normalized) free convolution power in this continuous setting, and also establish an intriguing variational description of this process.
\end{abstract}

\maketitle

%%%%%%%%%%%%%%%%%%%%%%%%%%%%%%%%%%%%%%%%%%%%%%%%%

\section{Introduction}

\subsection{Integer free convolution powers}

In this paper we assume familiarity with noncommutative probability, particularly the concept of free independence (see, e.g., \cite{vdn}). 

In \cite{voi}, Voiculescu introduced the notion of the \emph{free convolution} $\mu \boxplus \nu$ of two compactly supported probability measures $\mu,\nu$ on $\R$.  There are multiple ways to define this operation.  One is to define $\mu \boxplus \nu$ to be the law of $X+Y$, where $X, Y$ are freely independent (real) noncommutative random variables with law $\mu,\nu$ respectively.  Another is to define $\mu \boxplus \nu$ to be the asymptotic empirical spectral distribution of $A+B$ as $N \to \infty$, where $A,B$ are classically independent bounded $N \times N$ random Hermitian matrices, each invariant under unitary conjugation, and whose empirical spectral distribution converges to $\mu,\nu$ respectively.  A third way is to introduce the \emph{Cauchy transform}\footnote{One can also write $G_\mu = -s_\mu$, where $s_\mu(z) \coloneqq \int_\R \frac{d\mu(x)}{x-z}$ is the \emph{Stieltjes transform} of $\mu$; however it will be slightly more convenient to work with the Cauchy transform instead of the Stieltjes transform to reduce the number of minus signs in our formulae.} $G_\mu \colon \C \backslash \mathrm{supp}(\mu) \to \C$ of a compactly supported probability measure $\mu$ by the formula
\begin{equation}\label{gdef}
 G_\mu(z) \coloneqq \int_\R \frac{d\mu(x)}{z-x}
\end{equation}
for $z \in \C \backslash \mathrm{supp}(\mu)$ (in particular one has $G_\mu(z) = \frac{1}{z} + O(\frac{1}{|z|^2})$ as $|z| \to \infty$), and then define the \emph{$R$-transform} $R_\mu(s)$ for sufficiently small complex numbers $s$ by requiring that 
\begin{equation}\label{rg}
 \frac{1}{G_\mu(z)} + R_\mu( G_\mu(z) ) = z
\end{equation}
for all sufficiently large $z$.  For sufficiently small $z$ one has the convergent Taylor expansion
$$ R_\mu(s) = \sum_{n=0}^\infty \kappa_{n+1}(\mu) s^n$$
where 
\begin{align*}
\kappa_1(\mu) &= \int_\R x\ d\mu \\
\kappa_2(\mu) &= \int_\R x^2\ d\mu - \left(\int_\R x\ d\mu\right)^2\\
\kappa_3(\mu) &= \int_\R x^3\ d\mu - 3 \left(\int_\R x\ d\mu\right) \left(\int_\R x^2\ d\mu\right)^2 + 2 \left(\int_\R x\ d\mu\right)^3\\
&\dots
\end{align*}
are the \emph{free cumulants} of $\mu$.  

\begin{example}\label{ex:semicircle} If $\semicirc$ is the \emph{semicircular distribution}
$$ \semicirc \coloneqq \frac{1}{2\pi} (4-x^2)_+^{1/2}\ dx$$
then one easily verifies that
$$ R_\semicirc(s) = s,$$
thus $\kappa_1(\semicirc)=1$ and $\kappa_n(\semicirc)=0$ for $n>1$.  It is not difficult to see that a compactly supported probability measure $\mu$ is uniquely determined by its $R$-transform $R_\mu$.
\end{example}
 
The free convolution $\mu \boxplus \nu$ is then the unique compactly supported measure for which
$$ R_{\mu \boxplus \nu}(s) = R_\mu(s) + R_\nu(s)$$
for all sufficiently small $s$, or equivalently $\kappa_n(\mu\boxplus \nu) = \kappa_n(\mu) + \kappa_n(\nu)$ for all $n \geq 1$ (see, e.g., \cite{vdn}); this is a commutative and associative operation on such measures.  If $k$ is a positive integer, one can then $\mu^{\boxplus k} = \mu \boxplus \dots \boxplus \mu$ to be the free convolution of $k$ copies of $\mu$, and one clearly has
\begin{equation}\label{rbox}
 R_{\mu^{\boxplus k}}(z) = k R_\mu(z)
\end{equation}
for all sufficiently small $s$, or equivalently 
\begin{equation}\label{kappan}
\kappa_n(\mu^{\boxplus k}) = k \kappa_n(\mu)
\end{equation}
for all $n \geq 1$.  One can normalize these free convolutions by defining the dilates $\lambda_* \mu$ of a probability measure $\mu$ by a scaling factor $\lambda>0$ to be the pushforward of $\mu$ by the dilation $x \mapsto \lambda x$ (thus, if $\mu$ is the law of a random variable $X$, then $\lambda_* \mu$ is the law of $\lambda X$).  One easily verifies the scaling laws
\begin{equation}\label{gmu}
 G_{\lambda_* \mu}(z) = \lambda^{-1} G_\mu( z / \lambda )
\end{equation}
for all $z$ outside of the support of $\mu$, and
$$ R_{\lambda_* \mu}(s) = \lambda R_\mu(\lambda s)$$
for all sufficiently small $s$ (or equivalently $\kappa_n(\lambda_* \mu) = \lambda^n \kappa_n(\mu)$ for all $n \geq 1$), hence one has
$$ R_{k^{-1/2}_* \mu^{\boxplus k}}(s) = k^{1/2} R_\mu(k^{-1/2} s).$$
Using this relation, Voiculescu \cite{voi} established the \emph{free central limit theorem}: if $\mu$ is a compactly supported probability measure of mean zero and variance one, then the normalized free convolutions $k^{-1/2}_* \mu^{\boxplus k}$ converge in the vague topology to the semicircular distribution $\semicirc$.

In \cite{voi-entropy}, Voiculescu also introduced the \emph{free entropy}	
$$ \chi(\mu) \coloneqq \int_\R \int_\R \log|x-y|\ d\mu(x)d\mu(y) + \frac{3}{4} + \frac{1}{2} \log 2\pi$$
and the \emph{free Fisher information}\footnote{There appears to be some inconsistency in terms of normalization constants in the definition of $\Phi$ between (and within) Voiculescu's papers \cite{voi-entropy,voi-score}. In particular, there appears to be an unfortunate typo in the statement and proof of Lemma 3.2 of \cite{voi-entropy}, in which a factor of $\pi^2/2$ was left off.  Our choice of normalization in the definition of $\Phi$ is compatible with its definition via the $L^2$ norm of a free conjugate variable as in \cite{voi-score} and differs by a factor of $4\pi^2/3$ from the definition in \cite{voi-entropy}.  If $\mu$ is the semicircular law with second moment equal to $1$ as in Example~\ref{ex:semicircle}, then its free Fisher information equals $1$ in our normalization.}
\begin{equation}\label{23-phi}
 \Phi(\mu) \coloneqq \frac{4\pi^2}{3} \int_\R \left(\frac{d\mu}{dx}\right)^3\ dx
\end{equation}
for compactly supported probability measures $\mu$ (with the convention that $\Phi(\mu)=+\infty$ if $\mu$ is not absolutely continuous); the two concepts are related by the derivative rationa
$$ \Phi(\mu) = 2 \frac{d}{dt} \chi( \mu \boxplus \sqrt{t}_* \semicirc )|_{t=0}$$
and the closely associated integral formula
\begin{equation}\label{tik}
 \chi(\mu) = \frac{1}{2} \int_0^\infty \left(\frac{1}{1+t} - \Phi(\mu \boxplus \sqrt{t}_* \semicirc)\right)\ dt + \frac{1}{2} \log 2\pi e.
\end{equation}
In \cite{shly}, it was shown that these quantities were monotone with respect to normalized free convolution powers in the sense that
\begin{equation}\label{ch-1}
 \chi( (k+1)^{-1/2}_* \mu^{\boxplus k+1} ) \geq \chi( k^{-1/2}_* \mu^{\boxplus k} ) 
\end{equation}
and
\begin{equation}\label{ch-2}
 \Phi( (k+1)^{-1/2}_* \mu^{\boxplus k+1} ) \leq \Phi( k^{-1/2}_* \mu^{\boxplus k} ) 
\end{equation}
for all compactly supported $\mu$, and all $k \geq 1$.  This was the free analog of a corresponding result proven in \cite{abbn} for the Shannon entropy and classical Fisher information, answering a question of Shannon \cite{shannon}. 

As is customary, if $X$ is a real noncommutative random variable with law $\mu$, we write $G_X \coloneqq G_\mu$, $R_X \coloneqq R_\mu$, $\kappa_n(X) \coloneqq \kappa_n(\mu)$, $\Phi(X) \coloneqq \Phi(\mu)$, and $\chi(X) \coloneqq \chi(\mu)$.

\subsection{Fractional free convolution powers}

Observe that the right-hand sides of \eqref{rbox}, \eqref{kappan} make sense for any real number $k$.  This raises the question of whether one can define fractional powers $\mu^{\boxplus k}$ for non-integer choices of $k$.  This is indeed true:

\begin{proposition}[Existence of fractional free convolution powers]\label{exist}  Let $\mu$ be a compactly supported probability measure on $\R$, and let $k \geq 1$ be real.  Then there exists a unique compactly supported probability measure $\mu^{\boxplus k}$ on $\R$ such that
\begin{equation}\label{rmos}
 R_{\mu^{\boxplus k}}(s) = k R_\mu(s)
\end{equation}
for all sufficiently small $s$, or equivalently
$$ \kappa_n(\mu^{\boxplus k}) = k^n \kappa_n(\mu)$$
for all $n \geq 1$.  
\end{proposition}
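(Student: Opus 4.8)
The plan is to construct $\mu^{\boxplus k}$ concretely by \emph{free compression}, in the manner of Nica--Speicher, and to obtain uniqueness from the fact (recorded in Example~\ref{ex:semicircle}) that a compactly supported probability measure is determined by its $R$-transform. Uniqueness is then immediate: any two compactly supported probability measures whose $R$-transforms both equal $kR_\mu$ near $0$ have the same $R$-transform and hence coincide, so only existence requires work.

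For existence, realize $\mu$ as the distribution of a bounded self-adjoint element $X$ in a tracial von Neumann algebra $(M,\tau)$ with faithful normal trace. Since $k\ge1$, the number $1/k$ lies in $(0,1]$ and can be the trace of a projection; after enlarging $M$ if necessary (for instance by passing to the free product of $(M,\tau)$ with $\bigl(L^\infty([0,1]),\mathrm{Leb}\bigr)$ and taking $p=\mathbf 1_{[0,1/k]}$), we may assume $M$ contains a projection $p$, free from $X$, with $\tau(p)=1/k$. Pass to the compressed tracial von Neumann algebra $(pMp,\tau_p)$, where $\tau_p(x)\coloneqq\tau(x)/\tau(p)=k\,\tau(x)$ for $x\in pMp$, and set $Y\coloneqq k\,pXp$, a bounded self-adjoint element of $pMp$. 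Its spectral distribution $\mu_Y$ relative to $\tau_p$ (the compactly supported probability measure on $\R$ with $\int_\R f\,d\mu_Y=\tau_p(f(Y))$ for all continuous $f$) is the candidate: we put $\mu^{\boxplus k}\coloneqq\mu_Y$.

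What makes this work is the Nica--Speicher compression formula for free cumulants: if $p$ is free from $a$ in $(M,\tau)$ with $\tau(p)=\lambda>0$, then the free cumulants of $pap$ computed in $(pMp,\lambda^{-1}\tau)$ satisfy $\kappa_n(pap,\dots,pap)=\lambda^{\,n-1}\kappa_n(a,\dots,a)$ for all $n\ge1$. Granting this with $\lambda=1/k$, and using the dilation scaling $\kappa_n(\lambda_*\nu)=\lambda^n\kappa_n(\nu)$ recalled before the statement to pull the scalar $k$ out of $Y=k\,pXp$, one obtains $\kappa_n(Y)=k^n k^{-(n-1)}\kappa_n(X)=k\,\kappa_n(X)$ for every $n\ge1$, which is precisely the assertion that $R_{\mu_Y}(s)=kR_\mu(s)$ for sufficiently small $s$ (both sides are analytic near $0$ with matching Taylor coefficients). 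The one substantial ingredient is the compression formula itself; I would establish it by computing the moments $\tau_p\bigl((pap)^m\bigr)=\lambda^{-1}\tau\bigl((pa)^m\bigr)$ through the moment--cumulant expansion --- freeness forces the contributing non-crossing partitions to decompose into a non-crossing partition of the $a$-entries together with one of the $p$-entries, after which the projection blocks are checked to contribute a net factor $\lambda^{\,m-1}$ --- and then passing back to cumulants by M\"obius inversion; alternatively one can simply cite Nica--Speicher. This combinatorial bookkeeping is where I expect the main effort to lie.

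A purely analytic alternative follows Bercovici--Voiculescu: for compactly supported $\mu$ the function $K_\mu(z)=\frac1z+R_\mu(z)$ is the local inverse of $G_\mu$ near $z=0$, and one shows that the inverse $G_k$ of $K_k(z)\coloneqq\frac1z+kR_\mu(z)$ extends to an analytic map sending the upper half-plane into the lower half-plane with $zG_k(z)\to1$ as $z\to\infty$, so that $G_k$ is the Cauchy transform of a probability measure, whose support is compact by a growth estimate. The obstacle there is establishing the half-plane mapping property and the compact support of $G_k$ by complex-analytic means; the compression argument sidesteps both, which is why I would present it as the main proof.
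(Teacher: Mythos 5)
Your proposal is correct and takes the same conceptual route as the paper's self-contained proof: realize $\mu$ as the law of a bounded self-adjoint $X$ in a tracial noncommutative probability space, adjoin a free projection $p$ of trace $1/k$, and define $\mu^{\boxplus k}$ to be the law of $k\,pXp$ in the compressed space $(pMp,\,k\tau)$; uniqueness from the $R$-transform is immediate, as you note. The single point of divergence is how the compression formula $\kappa_n(pXp)=k^{1-n}\kappa_n(X)$ is verified. You propose the combinatorial route — expanding $\tau_p\bigl((pXp)^m\bigr)$ over non-crossing partitions, tracking the projection blocks, and M\"obius inverting — which is the original Nica--Speicher argument and certainly works, though the partition bookkeeping you flag as the main effort is real. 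The paper's Appendix~\ref{power} instead bypasses the combinatorics with a resolvent/Neumann-series argument (Lemma~\ref{alg}): writing $(z-X)^{-1}=s(1-E(s))$ with $E(s)$ trace-free and free from $p$, one checks that compression preserves this structure, which gives the $R$-transform identity directly. That algebraic route is shorter once the lemma is set up and is the one the paper adopts; both are valid proofs of the same fact. Your Bercovici--Voiculescu analytic alternative is also legitimate (and is essentially the original argument for large $k$), with exactly the difficulty you identify — establishing the Nevanlinna mapping property and compact support of the constructed $G_k$ — which is why the compression construction is preferred for the full range $k\ge1$. One small remark: your computation correctly yields $\kappa_n(\mu^{\boxplus k})=k\,\kappa_n(\mu)$, which is what $R_{\mu^{\boxplus k}}=kR_\mu$ and \eqref{kappan} actually require; the exponent $k^n$ in the displayed cumulant identity of the proposition statement is a typographical slip.
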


Thus for instance $\semicirc^{\boxplus k} = k^{1/2}_* \semicirc$ for any $k \geq 1$.

Proposition \ref{exist} was first established for sufficiently large $k$ by Bercovici and Voiculescu \cite{bv}, and then for all $k \geq 1$ by Nica and Speicher \cite{ns}; a complex analysis proof using subordination was given by Belinschi-Bercovici \cite{bb-1, bb-2} and Huang \cite{huang}. See also the recent paper \cite{belin} for further study of the subordination functions associated to these measures, and \cite{huang}, \cite{williams} for further regularity and support properties of the $\mu^{\boxplus k}$, and \cite{abfn}, \cite{shly-2} for an extension to the case when $k$ is a completely positive map and $\mu$ takes values in a $C^*$-algebra. 

From \eqref{rmos} and the invertibility of the $R$-transform we have the semigroup law
\begin{equation}\label{mono}
 (\mu^{\boxplus k})^{\boxplus l} = \mu^{\boxplus kl}
\end{equation}
for any real $k,l \geq 1$, and similarly
$$
\mu^{\boxplus k} \boxplus \mu^{\boxplus l} = \mu^{\boxplus k+l}.
$$
Thus one can now view $k \mapsto \mu^{\boxplus k}$ as a continuous one-parameter semigroup.  There are also connections between fractional free convolution powers and free \emph{multiplicative} convolution: see \cite{bn} and below. 

The proof of Proposition \ref{exist} by Nica and Speicher \cite{ns} also gave the following free probability interpretation of such powers.  Let $(\A, \tau)$ be a noncommutative probability space (that is to say, a complex associative unital $*$-algebra $\A$ equipped with a unital tracial positive linear functional $\tau$, and let $p \in \A$ be a self-adjoint projection of trace $1/k$ for some $k \geq 1$ (thus $p^* = p^2 = p$ and $\tau(p)=1/k$).  Then we can form another noncommutative probability space $(\A_p, \tau_p)$ by defining $\A_p = [p\A p]$ to be a copy\footnote{Thus for instance $[pXp][pYp] = [pXppYp] = [p(XpY)p]$ and $[pXp]+[pYp]=[pXp+pYp] = [p(X+Y)p]$.  The brackets $[]$ are a formal symbol, which we introduce in order to distinguish the algebraic structures of $\A_p$ from that of $\A$.  In particular, the unit $1 = [p]$ of $\A_p$ needs to be distinguished from the non-unit $p$ of $\A$, and the invertibility of an element $[pXp]$ of $\A_p$ does not imply the invertibility of the corresponding element $pXp$ of $\A$.} 
$$
\A_p \coloneqq \{ [pXp]: X \in \A\}$$
of $p\A p \coloneqq \{ pXp: X \in \A \}$, and
\begin{equation}\label{taup}
\tau_p( [pXp] ) \coloneqq k \tau(pXp) = k \tau(pX) = k \tau(Xp)
\end{equation}
for any $X \in \A$.  It is not difficult to verify that $(\A_p, \tau_p)$ is a noncommutative probability space.  We have a ``minor map'' or ``compression map'' $\pi \colon \A \to \A_p$ defined by
$$ \pi(X) \coloneqq [pXp];$$
this map is $*$-linear, surjective, and maps the unit $1$ of $\A$ to the unit $1 = [p]$ of $\A_p$.  The minor map $\pi$ is not an algebra homomorphism nor is it trace-preserving, but one does at least have homomorphism-like identities
\begin{equation}\label{piy}
\pi(X) \pi(Y) = \pi(pXpYp) = \pi( XpYp ) = \pi(pXpY) = \pi(XpY) 
\end{equation}
for any $X,Y \in \A$, and from \eqref{taup} we have
\begin{equation}\label{taup-2}
\tau_p( \pi(X) ) \coloneqq k \tau(pXp)
\end{equation}
for any $X \in \A$.

\begin{example} Let $k$ be a rational number $k = N/M > 1$, $\A = M_N(\C)$ be the space of $N \times N$ matrices with trace $\tau(X) \coloneqq \frac{1}{N} \Tr(X)$, and $p = \begin{pmatrix} I_M & 0_{M \times N-M} \\ 0_{N-M \times M} & 0_{N-M \times N-M}\end{pmatrix}$ be the orthogonal projection to span of the first $M$ standard basis vectors.  Then $\A_p$ can identified with $M_M(\C)$ (with trace $\tau_p(X) \coloneqq \frac{1}{M} \Tr(X)$).  With this identification, $\pi(X)$ is the upper left $M \times M$ minor of $X$.
\end{example}

We then have the following interpretation of fractional free convolution powers as a normalized free minor process.

\begin{proposition}[Fractional free convolution powers from free minors]\label{minor-free}  If $(\A,\tau)$ is a noncommutative probability space, $k \geq 1$ is real, $p$ is a real projection of trace $1/k$, and $X \in \A$ has some law $\mu$ and is freely independent of $p$, then $k \pi(X)$ has law $\mu^{\boxplus k}$.  Thus
$$ R_{k \pi(X)}(s) = k R_X(s)$$
or equivalently
\begin{equation}\label{pipx}
 R_{\pi(X)}(s) = R_X(s/k)
\end{equation}
for all sufficiently small $s$; in terms of free cumulants, this becomes
\begin{equation}\label{kip}
 \kappa_n( \pi(X) ) = k^{1-n} \kappa_n(X)
\end{equation}
for $n \geq 1$.
\end{proposition}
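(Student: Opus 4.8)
The plan is to deduce everything from the free cumulant identity \eqref{kip}, i.e.\ $\kappa_n(\pi(X)) = k^{1-n}\kappa_n(X)$ for every $n\ge1$. Granting \eqref{kip}, the formula \eqref{pipx} is immediate from $R_\mu(s) = \sum_{n\ge0}\kappa_{n+1}(\mu)s^n$; combining \eqref{pipx} with the dilation law $R_{\lambda_*\mu}(s) = \lambda R_\mu(\lambda s)$ at $\lambda = k$ gives $R_{k\pi(X)}(s) = kR_X(s)$; and then the uniqueness clause of Proposition \ref{exist} (noting that $k\pi(X)$ has a compactly supported law, since $\|pXp\|\le\|X\|$) identifies the law of $k\pi(X)$ with $\mu^{\boxplus k}$.

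To prove \eqref{kip} I would compute the moments of $\pi(X)$ in $(\A_p,\tau_p)$ and read off its free cumulants. Write $\lambda\coloneqq\tau(p) = 1/k$. By \eqref{piy} the $n$-th power of $\pi(X)$ corresponds to $(pXp)^n\in p\A p$, so from \eqref{taup-2}, the relation $p = p^2$, and traciality,
$$ \tau_p(\pi(X)^n) = k\,\tau\big((pXp)^n\big) = k\,\tau\big((Xp)^n\big)\qquad(n\ge1), $$
which reduces matters to evaluating the alternating moment $\tau(Xp\,Xp\cdots Xp)$ (with $n$ factors each of $X$ and of $p$) of the free pair $(X,p)$.

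Here I would invoke the standard combinatorial description of such alternating moments from the Nica--Speicher theory \cite{ns}. Expanding $\tau(Xp\cdots Xp)$ by the moment--cumulant formula over $NC(2n)$ and using that all mixed free cumulants of the free pair $(X,p)$ vanish, only those partitions survive whose blocks lie entirely among the $X$-slots or entirely among the $p$-slots; organizing the survivors via the Kreweras complement $K(\cdot)$ on $NC(n)$ yields
$$ \tau\big((Xp)^n\big) = \sum_{\pi\in NC(n)}\Big(\prod_{V\in\pi}\kappa_{|V|}(X)\Big)\Big(\prod_{W\in K(\pi)}\tau\big(p^{|W|}\big)\Big). $$
Since $\tau(p^m) = \tau(p) = \lambda$ for all $m\ge1$, the second product equals $\lambda^{|K(\pi)|}$; plugging in $\lambda = 1/k$, using the block-count identity $|K(\pi)| = n+1-|\pi|$, and rewriting $k^{|\pi|-n} = \prod_{V\in\pi}k^{1-|V|}$ (valid because $n-|\pi| = \sum_{V\in\pi}(|V|-1)$), I obtain
$$ \tau_p(\pi(X)^n) = \sum_{\pi\in NC(n)}\prod_{V\in\pi}\big(k^{1-|V|}\kappa_{|V|}(X)\big). $$

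Comparing this last expression, for every $n$, with the moment--cumulant expansion $\tau_p(\pi(X)^n) = \sum_{\pi\in NC(n)}\prod_{V\in\pi}\kappa_{|V|}(\pi(X))$ and using the uniqueness of free cumulants (Möbius inversion over $NC$) forces $\kappa_n(\pi(X)) = k^{1-n}\kappa_n(X)$, which is \eqref{kip}. The only genuinely substantive step is the Nica--Speicher reorganization in the previous paragraph --- the vanishing of mixed free cumulants of the free pair $(X,p)$ together with the Kreweras-complement bookkeeping and the identity $|K(\pi)|+|\pi| = n+1$ on $NC(n)$; everything else is routine manipulation of traces and formal power series.
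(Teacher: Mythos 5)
Your proof is correct, but it follows a genuinely different route from the paper's self-contained proof in Appendix \ref{power}. You establish the cumulant identity \eqref{kip} by the Nica--Speicher combinatorial method: expanding the alternating moment $\tau((Xp)^n)$ over $NC(2n)$, invoking the vanishing of mixed free cumulants of the free pair $(X,p)$ so that only partitions splitting into an $X$-partition $\pi$ and a $p$-partition $\pi'\le K(\pi)$ survive, resumming over $\pi'$ to produce moments of $p$ on the blocks of the Kreweras complement, and then using $\tau(p^m)=1/k$ together with $|\pi|+|K(\pi)|=n+1$ to read off the $k^{1-n}$ scaling by M\"obius inversion on $NC(n)$. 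This is in substance the argument of \cite[Corollary 1.14]{ns}, which the paper cites as the primary reference. The paper's Appendix \ref{power} instead gives a resolvent-based argument: after a GNS embedding into a von Neumann algebra, it introduces the transform $\Psi(E)=(1-E)^{-1}-1$, proves two algebraic identities (Lemma \ref{alg}) describing how the compression $\pi$ interacts with such resolvents, writes $(z-X)^{-1}=s(1-E(s))$ with $E(s)$ trace-free and free from $p$, and pushes the relation $R_\mu(s)+1/s=z$ through the compression, using the vanishing $\tau_p(E_{kp})=0$ to close the computation and extract $R_{k\pi(X)}(s)=kR_\mu(s)$ directly. Your approach makes the origin of the $k^{1-n}$ factor combinatorially transparent; the paper's approach avoids any non-crossing-partition or Kreweras-complement machinery beyond the definition of $R$-transforms, at the cost of some algebraic bookkeeping with $\Psi$. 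Both are valid, and your reduction of the remaining claims to \eqref{kip} via the dilation law and the uniqueness clause of Proposition \ref{exist} is handled correctly.
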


\begin{proof} See \cite[Corollary 1.14]{ns}.  For the convenience of the reader, we also give a self-contained proof in Appendix \ref{power}.
\end{proof}

\begin{remark}
By the asymptotic free independence of independent unitarily invariant large matrices (see appendix to \cite{ns}), one can also define $\mu^{\boxplus k}$ for any real $k \geq 1$ as the asymptotic empirical distribution of the $M \times M$ random matrix $k A_{M \times M}$ as $N \to \infty$, where $A$ is a $N \times N$ bounded random Hermitian matrix, invariant under unitary conjugation, whose empirical law converges to $\mu$, $M \coloneqq \lceil N/k \rceil$, and $A_{M \times M}$ is the upper left $M \times M$ minor of $A$.  There is a similar interpretation of fractional free convolution powers in terms of the asymptotic distribution of large random Young tableaux, drawn uniformly from all tableaux of a given shape; see \cite{biane}.
\end{remark}

One can investigate the dynamic of fractional free convolution powers as follows. From \eqref{rmos}, \eqref{rg} one has
\begin{equation}\label{eqo}
 \frac{1}{G_{\mu^{\boxplus k}(z)}} + k R_\mu( G_{\mu^{\boxplus k}}(z) ) = z
\end{equation}
for all $k \geq 1$ ranging in a compact set and all sufficiently large $z$.  In particular, from the inverse function theorem, $G_{\mu^{\boxplus k}}(z)$ varies smoothly in $k,z$ in this regime.  Applying the first order differential operator
$$ \partial_z G_{\mu^{\boxplus k}}(z) \partial_k -  \partial_k G_{\mu^{\boxplus k}}(z) \partial_z,$$
which annihilates $G_{\mu^{\boxplus k}}(z)$ as well as any autonomous function of $G_{\mu^{\boxplus k}(z)}$, to both sides of \eqref{eqo}, we conclude that
$$ \left(\partial_z G_{\mu^{\boxplus k}}(z)\right) R_\mu( G_{\mu^{\boxplus k}}(z) ) = - \partial_k G_{\mu^{\boxplus k}}(z)$$
which when combined with \eqref{eqo} to eliminate the $R_\mu( G_{\mu^{\boxplus k}}(z) )$ factor yields the Burgers-type equation
\begin{equation}\label{diffeq}
(k \partial_k + z \partial_z) G_{\mu^{\boxplus k}}(z) = \frac{\partial_z G_{\mu^{\boxplus k}}(z)}{G_{\mu^{\boxplus k}}(z)}  
\end{equation}
for $k \geq 1$ in a fixed compact region and sufficiently large $z$.  From \eqref{gmu} we have
$$ G_{k^{-1/2}_* \mu^{\boxplus k}}(z) = k^{1/2} G_{\mu^{\boxplus k}}(k^{1/2} z)$$
so after some calculation we can also write this equation in renormalized form as
\begin{equation}\label{diffeq-2}
(k \partial_k + \frac{1}{2} z \partial_z) G_{k^{-1/2}_* \mu^{\boxplus k}}(z) = \frac{ \partial_z G_{k^{-1/2}_* \mu^{\boxplus k}}(z)}{G_{k^{-1/2}_* \mu^{\boxplus k}}(z)}  + \frac{1}{2} G_{k^{-1/2}_* \mu^{\boxplus k}}(z).
\end{equation}
This in turn gives a differential equation for $k^{-1/2}_* \mu^{\boxplus k}$; see \eqref{feq}.

It is now natural to ask whether the properties of integer free convolution powers $\mu^{\boxplus k}, k \in \N$ extend to the fractional counterparts $\mu^{\boxplus k}, k \in \R$.  For instance, fractional convolution power allow us to make sense of the law of central limit sums $Y_k := k^{-1/2} \sum_{j=1}^k X_j$ of free iid copies $X_j$ of a centered bounded random variable $X$.  If $X$ has law $\mu$, then $Y_N$ has law $k^{-1/2}_* \mu^{\boxplus k}$.  The free central limit theorem states that the law of $Y_k$ converges to the semicircle law as $k\to \infty$ along positive integers.  It is easy to see that the $R$-transform proof of the free central limit theorem (see, e.g., \cite{vdn}) shows also that $k_*^{-1/2} \mu^{\boxplus k}$ converges to the semicircle law as $k \to \infty$ along the positive reals.
 
Now we turn to the monotonicity of free entropy and free Fisher information, which is the first main result of our paper.

\begin{theorem}[Monotonicity of free entropy and free Fisher information]\label{main}  Let $\mu$ be a compactly supported finite probability measure.  Then $\chi( k^{-1/2}_* \mu^{\boxplus k} )$ is monotone non-decreasing and $\Phi(k^{-1/2}_* \mu^{\boxplus k} )$ is monotone non-increasing in $k$ for real $k \geq 1$.
\end{theorem}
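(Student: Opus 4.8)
The plan is to establish the Fisher information monotonicity first and deduce the entropy monotonicity from it, after a reduction that localises the statement; write $\nu_k(\mu)\coloneqq k^{-1/2}_*\mu^{\boxplus k}$ for brevity.

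\emph{Step 1 (localisation).} The semigroup law \eqref{mono} and the scaling law $R_{\lambda_*\mu}(s)=\lambda R_\mu(\lambda s)$ give $\bigl((k_1^{-1/2})_*\rho\bigr)^{\boxplus l}=(k_1^{-1/2})_*\rho^{\boxplus l}$, hence $\nu_l(\nu_{k_1}(\mu))=(k_1 l)^{-1/2}_*(\mu^{\boxplus k_1})^{\boxplus l}=\nu_{k_1 l}(\mu)$. So for $1\le k_1\le k_2$, setting $l\coloneqq k_2/k_1\ge 1$ and $\sigma\coloneqq\nu_{k_1}(\mu)$, it suffices to prove $\Phi(\nu_l(\sigma))\le\Phi(\sigma)$ and $\chi(\nu_l(\sigma))\ge\chi(\sigma)$ for every compactly supported probability measure $\sigma$ and every real $l\ge 1$.

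\emph{Step 2 (entropy from Fisher information).} Since $\semicirc^{\boxplus l}=l^{1/2}_*\semicirc$, the scaling laws give $\nu_l(\rho)\boxplus\sqrt{t}_*\semicirc=\nu_l(\rho\boxplus\sqrt{t}_*\semicirc)$ for all $t\ge 0$. Granting the Fisher information inequality for \emph{every} measure, in particular for $\sigma\boxplus\sqrt{t}_*\semicirc$ (which has finite $\Phi$ when $t>0$), the integral formula \eqref{tik} gives
\[ \chi(\nu_l(\sigma))-\chi(\sigma)=\frac12\int_0^\infty\Bigl(\Phi\bigl(\sigma\boxplus\sqrt{t}_*\semicirc\bigr)-\Phi\bigl(\nu_l(\sigma)\boxplus\sqrt{t}_*\semicirc\bigr)\Bigr)\,dt\ge 0, \]
the integrand being pointwise nonnegative. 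This settles the entropy statement and lets us assume $\Phi(\sigma)<\infty$ in Step 3.

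\emph{Step 3 (Fisher information via conjugate variables).} By Proposition~\ref{minor-free}, realise $\nu_l(\sigma)$ as the law of $l^{1/2}\pi(X)$, where $X$ has law $\sigma$, is freely independent of a projection $p$ of trace $1/l$ in some tracial $*$-algebra $(\A,\tau)$, and $\pi(X)=[pXp]\in\A_p$. As $\Phi(\lambda Z)=\lambda^{-2}\Phi(Z)$, we have $\Phi(\nu_l(\sigma))=l^{-1}\Phi(\pi(X))$, so it suffices to prove $\Phi(\pi(X))\le l\,\Phi(X)$. Let $\xi$ be the free conjugate variable of $X$: self-adjoint, centered, in the $L^2$-closure of $W^*(X)$, hence free from $p$; because $X$ is free from $p$ it is also the conjugate variable of $X$ relative to $W^*(p)$, so $\tau(\xi w)=(\tau\otimes\tau)(\partial_X w)$ for every $w$ in the algebra generated by $X$ and $p$, where $\partial_X$ is the free difference quotient in $X$ that annihilates $p$. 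The candidate conjugate variable for $\pi(X)$ is $\eta\coloneqq l\,\pi(\xi)$: using the compression identities \eqref{piy} and the trace formula \eqref{taup} one computes $\tau_p(\pi(X)^n\eta)=l^2\,\tau\bigl(\xi\,(pX)^n p\bigr)=l^2(\tau\otimes\tau)\bigl(\partial_X[(pX)^n p]\bigr)$, and since $\partial_X[(pX)^n p]=\sum_{j=1}^n(pX)^{j-1}p\otimes(pX)^{n-j}p$ while $\tau\bigl((pX)^m p\bigr)=l^{-1}\tau_p(\pi(X)^m)$ by \eqref{taup}, this equals $\sum_{a+b=n-1}\tau_p(\pi(X)^a)\tau_p(\pi(X)^b)$, exactly the defining identity of the conjugate variable of $\pi(X)$. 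Hence $\Phi(\pi(X))\le\|\eta\|_{L^2(\tau_p)}^2$ (the conjugate variable of $\pi(X)$ is the projection of $\eta$ onto the $L^2$-closure of $W^*(\pi(X))$, of no larger norm). Finally $\|\eta\|_{L^2(\tau_p)}^2=l^3\,\tau(p\xi p\xi)$ by \eqref{piy} and \eqref{taup}, and expanding $p=l^{-1}+(p-l^{-1})$ and using $\tau(\xi)=0$ together with the freeness of $\xi$ and $p$ gives $\tau(p\xi p\xi)=l^{-2}\tau(\xi^2)=l^{-2}\Phi(X)$, so $\|\eta\|_{L^2(\tau_p)}^2=l\,\Phi(X)$ and therefore $\Phi(\pi(X))\le l\,\Phi(X)$.

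The hard part is Step 3: correctly extending the conjugate-variable identity to words in $X$ and $p$, pushing the computation through the non-multiplicative, non-trace-preserving compression $\pi$ via \eqref{piy}, and keeping the $L^2$-analytic points (existence of conjugate variables, $L^2$-closures, and the reduction to $\Phi(\sigma)<\infty$ via the semicircular smoothing) under control. As an alternative, closer in spirit to the Burgers-type equation \eqref{diffeq-2} derived above, one could instead differentiate $\Phi(\nu_k)$ and $\chi(\nu_k)$ in $k$ directly: \eqref{diffeq-2} yields an evolution equation for the density of $\nu_k$, and an integration by parts should present $\tfrac{d}{dk}\Phi(\nu_k)$ as a manifestly nonpositive dissipation, giving both monotonicities at once; this is more computational but avoids the operator-algebraic machinery.
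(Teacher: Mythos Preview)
Your proof is correct and follows essentially the same route as the paper: realise $\nu_l(\sigma)$ via the free compression of Proposition~\ref{minor-free}, show that $l\,\pi(J(X))$ serves as a conjugate variable for $\pi(X)$ (the paper packages this as Proposition~\ref{score-minor}, using the perturbative characterisation \eqref{tautau} rather than your direct moment computation via \eqref{colon}), bound $\Phi(\pi(X))\le l\,\Phi(X)$ by projecting and computing $\|\pi(J(X))\|_{L^2(\tau_p)}^2$ exactly as you do, and then deduce the entropy statement from \eqref{tik}. Your closing remark about differentiating in $k$ via \eqref{diffeq-2} is precisely the alternative proof the paper carries out in Section~\ref{complex-sec}.
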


Specializing to the case of integer $k$, we recover the previous results \eqref{ch-1}, \eqref{ch-2}.

We prove this theorem in Section \ref{score-proof}.  Our argument relies on the characterization of fractional free convolution powers in Proposition \ref{minor-free}, together with the fundamental fact that free independence is preserved by taking (free) minors. 
This proof also allows for an extension to several variables; see Theorem \ref{sev-var}.  In fact, as was shown to us by David Jekel, by applying a similar argument to the classical entropy and Fisher information of random matrix models, the argument can be adapted to a microstate setting, allowing one to also prove monotonicity for Voiculescu's multivariable microstates free entropy introduced in \cite{voi-entropy2}; see Appendix \ref{sec-microstates}.   Our argument shows that equality in Theorem \ref{main} only holds when $\mu$ is a rescaled version of semicircular measure $\semicirc$; see Proposition \ref{equality}.

By  computing all of the quantities that appear explicitly or implicitly in the proof given in Section \ref{score-proof}, we were able to extract a complex analytic proof of Theorem \ref{main} using the differential equation \eqref{diffeq}, at least if one assumes additional regularity on the original measure $\mu$; we present a streamlined (but somewhat unmotivated) version of this proof in Section \ref{complex-sec}. 

The fact that the flow \eqref{diffeq} enjoys some monotonicity properties suggests that it has an interpretation as a gradient flow.  We were not able to obtain such an interpretation, but we instead were able to find a (formal) \emph{Lagrangian} interpretation of this flow, when viewed in ``Gelfand-Tsetlin coordinates''.  Namely, let $\mu$ be a compactly supported probability measure on $\R$, let $\Delta$ denote the ``Gelfand-Tsetlin pyramid''
$$ \Delta \coloneqq \{ (s,y): 0 < s < 1; 0 < y < s \},$$
and for any $(s,y) \in \R$ let $\lambda(s,y)$ denote the real number for which
\begin{equation}\label{mus}
 \mu^{\boxplus 1/s}( (-\infty, \lambda(s,y)/s] ) = y/s.
\end{equation}
Under suitable non-degeneracy assumptions on $\mu$, $\lambda(s,y)$ will be well-defined and vary smoothly with $s,y$.  This function $\lambda(s,y)$ has the following random matrix interpretation.  Let $N$ be a large natural number parameter, and let $A$ be a random Hermitian $N \times N$ matrix, invariant under unitary conjugation, and with empirical spectral distribution converging to $\mu$ as $N \to \infty$.  Then the $\lceil yN\rceil^{\mathrm{th}}$ smallest eigenvalue of the $\lceil sN \rceil \times \lceil sN \rceil$ minor will be concentrated around $\lambda(s,y)$.  In Section \ref{variation-sec} we establish

\begin{theorem}[Variational formulation]\label{var-thm}  Formally, $\lambda$ is a critical point of the Lagrangian
\begin{equation}\label{lag}
 \int_\Delta L( \partial_s \lambda, \partial_y \lambda)\ ds dy
\end{equation}
where the Lagrangian density $L$ is given by the formula
\begin{equation}\label{L-def}
 L(\lambda_s, \lambda_y) \coloneqq \log \lambda_y + \log \sin \pi \frac{\lambda_s}{\lambda_y}.
\end{equation}
\end{theorem}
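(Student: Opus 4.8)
The plan is to verify that $\lambda$ satisfies the Euler-Lagrange equation of the functional \eqref{lag}, and then to recognize that equation as the Burgers-type flow \eqref{diffeq} for the Cauchy transforms $G_{\mu^{\boxplus k}}$ rewritten in Gelfand-Tsetlin coordinates. Since the density $L$ in \eqref{L-def} depends only on the gradient, the Euler-Lagrange equation is the conservation law $\partial_s\!\bigl(\partial L/\partial\lambda_s\bigr)+\partial_y\!\bigl(\partial L/\partial\lambda_y\bigr)=0$, and with the abbreviation $\theta:=\pi\lambda_s/\lambda_y$ one computes immediately
\begin{equation*}
 \frac{\partial L}{\partial\lambda_s}=\frac{\pi\cot\theta}{\lambda_y},\qquad
 \frac{\partial L}{\partial\lambda_y}=\frac{1-\theta\cot\theta}{\lambda_y}.
\end{equation*}
The easy first step is to differentiate the defining relation \eqref{mus} in $y$: writing $w(s,y):=G_{\mu^{\boxplus 1/s}}\!\bigl(\lambda(s,y)/s-i0\bigr)$ for the relevant boundary value, this just records that $\lambda(s,\cdot)$ is a reparametrization of the quantile function of $\mu^{\boxplus 1/s}$, and it yields $\operatorname{Im}w=\pi/\lambda_y$.

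The crucial structural step, which I expect to be the main obstacle, is to upgrade this to
\begin{equation*}
 w(s,y)=\frac{\pi}{\lambda_y\sin\theta}\,e^{i\theta},\qquad\text{equivalently}\qquad \operatorname{Re}w=\frac{\pi\cot\theta}{\lambda_y},
\end{equation*}
so that $\partial L/\partial\lambda_s=\operatorname{Re}w$ and $\partial L/\partial\lambda_y=\bigl(\operatorname{Im}w-\theta\operatorname{Re}w\bigr)/\pi$. I would obtain this by differentiating \eqref{mus} in $s$ as well, using the characteristic form $\partial_k G_{\mu^{\boxplus k}}=-R_\mu\!\bigl(G_{\mu^{\boxplus k}}\bigr)\,\partial_z G_{\mu^{\boxplus k}}$ of the flow (immediate from \eqref{eqo}) to express $\partial_s$ of the distribution function of $\mu^{\boxplus 1/s}$ as $\tfrac{1}{\pi s^2}\operatorname{Im}\int_0^{w}R_\mu(\zeta)\,d\zeta$, and then evaluating this integral by integrating along the boundary curve $\Lambda\mapsto G_{\mu^{\boxplus 1/s}}(\Lambda-i0)$ with the help of the subordination identity $R_\mu(w)=s\Lambda-s/w$ that \eqref{eqo} supplies on the boundary. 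Matching the two computations fixes $\arg w$ in terms of $\pi\lambda_s/\lambda_y$ up to an additive multiple of $\pi$, which is all that is needed since $\cot$ is $\pi$-periodic; the remaining ambiguity is pinned down at the soft edges of $\mu^{\boxplus 1/s}$, where $w$ becomes real and $\lambda_s/\lambda_y$ tends to $0$ or $-1$.

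It then remains to carry the flow equation \eqref{diffeq} itself into Gelfand-Tsetlin coordinates. Differentiating $w(s,y)=G_{\mu^{\boxplus 1/s}}(\lambda/s-i0)$ and using \eqref{diffeq} in the form $k\,\partial_k G_{\mu^{\boxplus k}}=-\bigl(z-1/G_{\mu^{\boxplus k}}\bigr)\,\partial_z G_{\mu^{\boxplus k}}$, the $\partial_z G$ and the $z=\lambda/s$ contributions combine and one is left with the clean transport identity $\lambda_y\,\partial_s w-\lambda_s\,\partial_y w=-\partial_y w/w$; its imaginary part, together with $\operatorname{Im}w=\pi/\lambda_y$, merely re-derives $\partial_y(\arg w-\theta)=0$, consistent with the structural step. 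Assembling everything, the real part of the transport identity reads $\lambda_y\,\partial_s(\operatorname{Re}w)-\lambda_s\,\partial_y(\operatorname{Re}w)=-\partial_y\log|w|$, the structural step gives $\log|w|=\log\pi-\log\lambda_y-\log|\sin\theta|$, and substituting $\operatorname{Re}w=\pi\cot\theta/\lambda_y$, $\operatorname{Im}w=\pi/\lambda_y$, $\theta=\pi\lambda_s/\lambda_y$ a short cancellation yields $\pi\,\partial_s(\operatorname{Re}w)+\partial_y\bigl(\operatorname{Im}w-\theta\operatorname{Re}w\bigr)=0$, which is exactly the Euler-Lagrange equation above. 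Besides the structural step, the remaining work is just bookkeeping with \eqref{mus} and the (pleasantly clean) final cancellation; the genuinely formal points, in line with the statement, are the branch of $\log\sin$ --- on $\Delta$ one has $\theta\in[-\pi,0]$, so $\log\sin\theta$ must be read as $\log|\sin\theta|$ --- and the non-degeneracy and regularity hypotheses on $\mu$ that make $\lambda$ smooth, extend \eqref{diffeq} to the relevant boundary values, and keep the edges of $\mu^{\boxplus 1/s}$ soft.
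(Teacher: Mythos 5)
Your proposal is correct and follows the paper's overall strategy: differentiate the quantile relation \eqref{mus}, use the Burgers-type flow \eqref{diffeq} evaluated at the boundary, establish the two key identities $f(\lambda/s)=1/\lambda_y$ and $Hf(\lambda/s)=\cot(\pi\lambda_s/\lambda_y)/\lambda_y$, and then verify the Euler--Lagrange equation by substitution. What you do differently is package the computation in terms of the complex boundary value $w=G_{\mu^{\boxplus 1/s}}(\lambda/s-i0)$ and the ``transport identity'' $\lambda_y\,\partial_s w-\lambda_s\,\partial_y w=-\partial_y w/w$, which I checked is indeed an immediate consequence of \eqref{diffeq} in Gelfand--Tsetlin coordinates; the paper instead works separately with the real and imaginary parts \eqref{sy}--\eqref{sx} and performs a longer real-variable manipulation with $\cot$ and $\cosec$. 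Your packaging is cleaner: the imaginary part of the transport identity just re-derives the structural step, and the real part together with $\log|w|=\log\pi-\log\lambda_y-\log|\sin\theta|$ produces the Euler--Lagrange equation in one cancellation (which I verified). For the structural step you propose integrating $R_\mu$ along the boundary curve using the characteristic form $\partial_k G=-R_\mu(G)\,\partial_z G$, whereas the paper integrates \eqref{sx} directly (which is the same Burgers equation after eliminating $R_\mu$); your route is plausible but left at the sketch level and would need care with divergences near $G\to 0$, so the paper's version is the more concrete one. One genuinely useful thing you noticed: keeping the boundary term at $x\to-\infty$, where $Hf<0$ and $\arg(Hf+i0)=\pi$, gives $\pi\lambda_s/\lambda_y=\arctan(f/Hf)-\pi\in[-\pi,0]$; this is consistent with Cauchy interlacing ($\partial_s\lambda\leq 0$) and with the explicit semicircle example, so your $\theta\in[-\pi,0]$ and the reading of $\log\sin$ as $\log|\sin|$ are correct, while the paper's displayed inequality $0\leq\partial_s\lambda\leq\partial_y\lambda$ has a harmless sign slip from dropping that boundary term (harmless because only the $\pi$-periodic $\cot$ enters the Euler--Lagrange equation).
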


We do not have a satisfactory interpretation of this Lagrangian density $L$.  In \cite{met} it is shown that random Gelfand-Tsetlin patterns formed by taking eigenvalues of successive minors asymptotically have the law of the Boutillier bead process \cite{bou}, so it seems reasonable to conjecture\footnote{Note added in proof: the recent calculations of local entropy (or ``surface-tension'') of the bead process in \cite{sun} (see also \cite{john}) seem to strongly support this conjecture.  We thank Istvan Prause for these references.  Furthermore, it was pointed out to us by Vadim Gorin (private communication) that the random Gelfand-Tsetlin process is a continuous version of a random lozenge tiling \cite{gorin}, for which a variational description was provided in \cite{cohn}, and that the calculation in \cite{sun} can be viewed as a careful evaluation of the continuum limit of the theory in \cite{cohn}.  A very similar conjecture in the context of random Young tableaux has recently been proposed in \cite{gordenko}.} that the Lagrangian density $L(\lambda_s, \lambda_y)$ is proportional to the entropy of this process (with density proportional to $1/\lambda_y$, and drift velocity proportional to $\lambda_s/\lambda_y$).

\subsection{Acknowledgments}

The first author was partially supported by NSF grant DMS-1762360. The second author was partially supported by NSF grant DMS-1764034 and by a Simons Investigator Award.  This project was initiated during the IPAM program for Quantitative Linear Algebra in 2018. We thank Vadim Gorin, Istvan Prause and Stefan Steinerberger for providing recent relevant references, and David Jekel for providing Appendix \ref{sec-microstates}.  David Jekel was supported by NSF grant DMS-2002826. Finally, we thank the anonymous referee for careful reading of the manuscript and several useful suggestions and corrections.

\section{Proof of monotonicity}\label{score-proof}

We now prove Theorem \ref{main}.  We will rely on two main tools.  The first is the fact that free independence is preserved by taking free minors:

\begin{lemma}\label{indep-minor}  Let $(\A, \tau)$ be a noncommutative probability space, and let $p \in \A$ be a real projection.  If $B_1,\dots,B_n \in \A$ are unital algebras such that $B_1,\dots,B_n,p$ are free in $\A$, then $\pi(B_1),\dots,\pi(B_n)$ are free in $\A_p$. 
\end{lemma}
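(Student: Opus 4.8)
The plan is to establish freeness of $\pi(B_1),\dots,\pi(B_n)$ in $(\A_p,\tau_p)$ by showing that all of their mixed free cumulants vanish. Write $\kappa_m$ and $\kappa_m^{\tau_p}$ for the (multivariate) free cumulants of $(\A,\tau)$ and of $(\A_p,\tau_p)$ respectively. The crucial input will be the following multivariate refinement of \eqref{kip}, essentially the compression formula of Nica and Speicher \cite{ns}: if $p$ is a real projection with $\tau(p)=1/k$ and $B\subseteq\A$ is a unital subalgebra free from $p$, then
\begin{equation}\label{compcum}
 \kappa_m^{\tau_p}\bigl(\pi(b_1),\dots,\pi(b_m)\bigr) = k^{1-m}\,\kappa_m(b_1,\dots,b_m)
\end{equation}
for all $m\ge1$ and all $b_1,\dots,b_m\in B$. (For $m=1$ this is just $\tau_p(\pi(b))=k\tau(pbp)=k\tau(p)\tau(b)=\tau(b)$, by freeness of $b$ and $p$; taking all the $b_j$ equal recovers \eqref{kip}.) I would obtain \eqref{compcum} either by citing \cite{ns}, or by adapting the derivation of \eqref{pipx} to the multivariate setting: expand the $\tau_p$-moments $\tau_p(\pi(b_{j_1})\cdots\pi(b_{j_\ell}))=k\,\tau(pb_{j_1}p\cdots pb_{j_\ell}p)$ using \eqref{piy} and \eqref{taup-2}, evaluate the resulting $\tau$-moments via freeness of $B$ from $p$ (which leaves only moments of $B$ and the explicitly known free cumulants of the projection $p$), and invert the moment-cumulant relations over $\mathrm{NC}(m)$.

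Granting \eqref{compcum}, the rest is quick. Since $B_1,\dots,B_n,p$ are free, associativity of freeness yields two facts: (i) $B_1,\dots,B_n$ are free in $(\A,\tau)$, so their mixed free cumulants vanish; and (ii) the unital subalgebra $B$ generated by $B_1\cup\dots\cup B_n$ is free from $p$, so \eqref{compcum} is available for this $B$. By the characterization of freeness in terms of vanishing mixed cumulants — combined with multilinearity of cumulants and the formula for cumulants of products, which let one pass from the subalgebras generated by the $\pi(B_i)$ back to the generating sets $\pi(B_i)$ themselves — it suffices to check that $\kappa_m^{\tau_p}(\pi(b_1),\dots,\pi(b_m))=0$ whenever $b_j\in B_{c(j)}$ and the index map $c\colon\{1,\dots,m\}\to\{1,\dots,n\}$ is non-constant. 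Each $b_j$ then lies in $B$, so \eqref{compcum} (valid by (ii)) reduces this to $\kappa_m(b_1,\dots,b_m)=0$, which holds by (i). Hence $\pi(B_1),\dots,\pi(B_n)$ are free.

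The step I expect to be the main obstacle is \eqref{compcum} itself; the other ingredients (associativity of freeness, the cumulant description of freeness, cumulants of products) are all standard noncommutative-probability formalism. If one prefers to avoid free cumulants, an alternative moment-based route is possible: unwinding the definition of freeness of $\pi(B_1),\dots,\pi(B_n)$ through \eqref{piy} and \eqref{taup-2} reduces the assertion to showing $\tau(p\,w_1\,p\,w_2\cdots p\,w_m\,p)=0$ for suitable words $w_j$ built from $B_{c(j)}$, and then, writing $p=(p-k^{-1})+k^{-1}$ and expanding, one verifies that freeness of $B_1,\dots,B_n$ forces every surviving term to be the trace of a genuinely alternating centered word in the algebras $B_i$ and in the algebra generated by $p$, hence zero. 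This exchanges the combinatorics of $\mathrm{NC}(m)$ for the bookkeeping of which terms survive the expansion.
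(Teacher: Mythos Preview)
Your proposal is correct and is essentially the approach behind the reference the paper cites: the paper does not give its own argument but defers to \cite[Corollary 1.12]{ns}, whose proof proceeds exactly via the multivariate compression formula \eqref{compcum} for free cumulants that you identify as the key step. Your reduction---associativity of freeness to get $B=\mathrm{Alg}(B_1,\dots,B_n)$ free from $p$, then \eqref{compcum} to transport vanishing of mixed cumulants from $(\A,\tau)$ to $(\A_p,\tau_p)$, with the cumulants-with-products formula handling the passage from generators to generated subalgebras---is precisely the Nica--Speicher argument.
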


\begin{proof}  See \cite[Corollary 1.12]{ns}.
\end{proof}

Next we recall the notion of \emph{free score} (also called free conjugate variable) from \cite{voi-score}.  If $(\A, \tau)$ is a noncommutative probability space, $X \in \A$, and $B$ is a unital subalgebra of $\A$, we define the \emph{free score} $J(X:B)$ of $X$ relative to $B$ (if it exists) to be the unique element in the $L^2(\tau)$ closure of the algebra $\mathrm{Alg}(X,B)$ generated by $X$ and $B$ with the property that
\begin{equation}\label{tautau}
 \frac{d}{d\eps} \tau( Z P( X + \eps Z, Y_1,\dots,Y_n) )|_{\eps=0} = \tau( J(X:B) P(X, Y_1,\dots,Y_n) )
\end{equation}
for any $Y_1,\dots,Y_n \in B$ and any noncommutative polynomial $P(X,Y_1,\dots,Y_n)$ in $n+1$ variables, where $Z$ is a noncommutative random variable of mean zero and variance one that is freely independent of $X,B$ (such a variable always exists if one is willing to extend the noncommutative space $(\A,\tau)$.)  An equivalent definition (see \cite[Proposition 3.4]{voi-score}) is that
\begin{equation}\label{colon}
 \tau \otimes \tau( \partial P( X, Y_1,\dots, Y_n ) ) = \tau( J(X:B) P(X, Y_1,\dots,Y_n) )
\end{equation}
where $\partial \colon \mathrm{Alg}(X,B) \to L^2(\tau \otimes \tau)$ is the unique derivation such that $\partial X = 1 \otimes 1$ and $\partial Y = 0$ for all $Y \in B$, see \cite{voi-score}. If $B$ is the trivial algebra $\C$, we abbreviate $J(X:\C)$ as $J(X)$.  It is known that the free Fisher information $\Phi(X)$ is finite if and only if the score exists, in which case \cite{voi-score} 
\begin{equation}\label{phix}
 \Phi(X) = \| J(X) \|_{L^2(\tau)}^2 = \tau( J(X)^2 );
\end{equation}
indeed this can be viewed as the ``true'' definition of the free Fisher information.  Specializing \eqref{tautau} to the case $P=1$ we see that the score, if it exists, is always trace-free:
\begin{equation}\label{score-trace}
\tau( J(X:B) ) = 0.
\end{equation}

We have the following basic fact from \cite{voi-score}:

\begin{lemma}[Free extensions do not affect free score]\label{freescore}  Let $(\A,\tau)$ be a noncommutative probability space, let $B, B'$ be unital subalgebras of $\A$, and $X \in \A$ be such that $X,B$ are free from $B'$. The score $J(X:B)$ exists if and only if the score $J(X:B,B')$ exists, and the two scores are equal: $J(X:B) = J(X:B,B')$.  Here we use $B,B'$ to denote the algebra generated by $B$ and $B'$.
\end{lemma}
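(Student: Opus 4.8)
The plan is to work with the derivation characterisation \eqref{colon} of the free score (one could equally use the conjugate-variable form \eqref{tautau}, after choosing a single mean-zero, variance-one variable $Z$ free from all of $X$, $B$ and $B'$, but the derivation form is more convenient here). Write $D \coloneqq \mathrm{Alg}(X,B)$ and $\tilde D \coloneqq \mathrm{Alg}(X,B,B')$; let $\partial \colon D \to D \otimes D$ be the derivation with $\partial X = 1 \otimes 1$ and $\partial|_B = 0$, and let $\partial' \colon \tilde D \to \tilde D \otimes \tilde D$ be the derivation with $\partial' X = 1 \otimes 1$ and $\partial'|_B = \partial'|_{B'} = 0$. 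Thus $\partial'$ restricts to $\partial$ on $D$ and annihilates every letter coming from $B'$. The heart of the matter is the forward implication, which I would phrase as: \emph{if $J \coloneqq J(X:B)$ exists, then $\tau(JQ) = (\tau\otimes\tau)(\partial' Q)$ for all $Q \in \tilde D$}. Since $J \in \overline{D}^{L^2} \subseteq \overline{\tilde D}^{L^2}$, this says exactly that $J(X:B,B')$ exists and equals $J(X:B)$.

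For the forward implication I would begin with a centring reduction. Since $D$ and $B'$ are unital, $\tilde D$ is spanned by $D$ together with alternating words $a_0 b_1' a_1 \cdots b_k' a_k$ with $k \geq 1$, $a_i \in D$, $b_j' \in B'$. Splitting each $b_j'$, and then each interior $a_i$ with $1 \le i \le k-1$, into its $\tau$-value plus a centred part, and absorbing the scalar parts into shorter words (which fuse two adjacent $D$-letters or two adjacent $B'$-letters, hence reduce $k$), one writes any $Q \in \tilde D$ as a linear combination of (i) elements of $D$ and (ii) \emph{reduced} words $a_0 b_1' a_1 \cdots b_k' a_k$ ($k \geq 1$) with $\tau(b_j') = 0$ for all $j$ and $\tau(a_i) = 0$ for $1 \le i \le k-1$; this is an induction on $k$. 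For $Q$ of type (i) the identity is the hypothesis, as $\partial' Q = \partial Q$ there. For a reduced word $Q$, I claim that \emph{both} $\tau(JQ)$ and $(\tau\otimes\tau)(\partial' Q)$ vanish. By traciality $\tau(JQ) = \tau\bigl((a_k J a_0)\,b_1' a_1 b_2' \cdots b_k'\bigr)$, a trace of the form $\tau\bigl(w\,u_1 u_2 \cdots u_r\bigr)$ with $w \in \overline{D}^{L^2}$ and $u_1,\dots,u_r$ an alternating sequence of centred letters from $D$ and $B'$; writing $w = \tau(w)\,1 + w^\circ$ and approximating $w^\circ$ by centred elements of $D$, freeness of $D$ and $B'$ together with $L^2$-continuity forces this to be $0$. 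For $(\tau\otimes\tau)(\partial' Q)$: since $\partial'$ kills every $B'$-letter, $\partial' Q$ is a sum of terms $w_i \cdot \partial a_i \cdot w_i'$ with $\partial a_i \in D \otimes D$ and $w_i,w_i'$ subwords of $Q$ to the left and right of $a_i$; applying $\tau\otimes\tau$ and using traciality to bring an outer $D$-letter to the front, each such term is a product of two traces, at least one of which again has the shape $\tau(w\,u_1\cdots u_r)$ above and hence vanishes. This proves the forward implication.

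The reverse implication follows formally. Suppose $\tilde J \coloneqq J(X:B,B')$ exists, and let $E_D$ be the $L^2(\tau)$-orthogonal projection onto $\overline{D}^{L^2}$; since $D$ is $*$-closed, $\tau(E_D(\xi)P) = \tau(\xi P)$ for all $\xi \in L^2(\tau)$ and $P \in D$. Because $\partial' P = \partial P$ for $P \in D$, we have
\[
 \tau\bigl(E_D(\tilde J)\,P\bigr) = \tau(\tilde J P) = (\tau\otimes\tau)(\partial' P) = (\tau\otimes\tau)(\partial P),
\]
so $E_D(\tilde J) \in \overline{D}^{L^2}$ satisfies the defining relation of $J(X:B)$; hence $J(X:B)$ exists and equals $E_D(\tilde J)$. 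The forward implication then shows that $J(X:B,B')$ exists and equals $J(X:B) = E_D(\tilde J)$, and uniqueness of the free score forces $\tilde J = E_D(\tilde J) = J(X:B)$.

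The main obstacle is the bookkeeping in the forward implication: setting up the centring reduction so that it terminates, and verifying that---because $\partial'$ annihilates every $B'$-letter---each word produced by $\partial'$ (and the word obtained by cycling $J$ into the trace) is of the form ``a single $D$-letter times an alternating word of centred letters'', so that one application of freeness of $D$ and $B'$ collapses it to zero. The density argument needed because $J$, and the leg $a_k J a_0$, lie only in $\overline{D}^{L^2}$ rather than $D$, is routine.
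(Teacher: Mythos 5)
Your proof is correct. Note that the paper itself does not supply an argument here: the body of the proof is simply a citation to Voiculescu's \cite[Proposition 3.6]{voi-score}. Your argument is a faithful reconstruction of the standard proof of that result, using the derivation characterisation \eqref{colon}, the alternating-word/centring decomposition of $\mathrm{Alg}(X,B,B')$, and freeness of $D=\mathrm{Alg}(X,B)$ from $B'$ to show that both $\tau(JQ)$ and $(\tau\otimes\tau)(\partial' Q)$ vanish on reduced words with at least one $B'$-letter. The forward direction is handled carefully: after cycling $a_k$ and absorbing $a_kJa_0$ into a single $\overline{D}^{L^2}$-leg, each trace has the form (possibly-uncentred $D$-letter)\,$\times$\,(nontrivial alternating word of centred $B'$- and $D$-letters), which vanishes after splitting off the trace of the lead letter and using $L^2$-continuity and freeness; similarly for the tensor legs of $\partial' Q$, at least one side trace always has that structure. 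The reverse direction via orthogonal projection onto $\overline{D}^{L^2}$ followed by reapplying the forward direction and invoking uniqueness is clean. The only thing worth flagging, which you handle but should keep in mind, is that the $L^2$-closure and bounded-multiplier arguments implicitly require working in the GNS/von~Neumann completion of $(\A,\tau)$, which is the setting Voiculescu uses; the paper's bare algebraic definition of a noncommutative probability space is to be read with that completion in place.
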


\begin{proof}
See \cite[Proposition 3.6]{voi-score}.
%
%It suffices to establish the identity
%$$ \frac{d}{d\eps} \tau( Z P( X + \eps Z, B, Y_1,\dots,Y_n) )|_{\eps=0} = \tau( J(X) P(X, B, Y_1,\dots,Y_n) )$$
%for all noncommutative polynomials $P$ and $Y_1,\dots,Y_n \in B'$, where by abuse of notation we use $B$ to denote some finite collection of elements from $B$.  By recursively splitting all factors into scalar and trace-free components we may assume that $P$ takes the form
%$$ P(X,B,Y_1,\dots,Y_n) = P_1(X,B) Y_1 \dots P_n(X,B) Y_n P_{n+1}(X,B)$$
%where $Y_1,\dots,Y_n \in B'$ have trace zero, and $P_1,\dots,P_{n+1}$ are polynomials with $P_i(X,B)$ trace zero for $2 \leq i \leq n$.  From the product rule we then have
%\begin{align*}
%& \frac{d}{d\eps} \tau( Z P( X + \eps Z, B, Y_1,\dots,Y_n) )|_{\eps=0} \\
%&\quad = \sum_{i=1}^{n+1} \frac{d}{d\eps} \tau( Z P_1(X,B) Y_1 \dots Y_{i-1} P_i(X+\eps Z,B) Y_i \dots P_{n+1}(X,B) )|_{\eps=0}.
%\end{align*}
%From free independence (and the trace zero nature of most of the factors) the terms $0 < i < n+1$ vanish, leaving one with
%$$\frac{d}{d\eps} \tau( Z P_1(X+\eps Z,B) Y_1 P_2(X,B) \dots Y_n P_{n+1}(X+\eps Z,B) )|_{\eps=0}$$
%which by the cyclic property of trace and free independence simplifies to
%$$\frac{d}{d\eps} \tau( Z P_1(X+\eps Z,B) P_{n+1}(X+\eps Z,B) ) \tau( Y_1 P_2(X,B) \dots Y_n )|_{\eps=0}$$
%which by \eqref{tautau} is
%$$ \tau( J(X) P_1(X,B) P_{n+1}(X,B) ) \tau( Y_1 P_2(X,B) \dots Y_n )$$
%and the claim follows from free independence and the cyclic property of trace.
\end{proof}

Now we come to a basic identity.

\begin{proposition}[Free score and minors]\label{score-minor}  Let $(\A,\tau)$ be a noncommutative probability space, let $p \in \A$ be a real projection of trace $k^{-1}$ for some $k \geq 1$, let $X \in \A$, and let $B$ be a unital subalgebra of $\A$.  Assume that $X,B$ are free of $p$ and that the free score $J(X:B)$ exists.  Then the free score $J(\pi(X):\pi(B))$ exists and is equal to 
$$ J(\pi(X):\pi(B)) = k \E\left( \pi(J(X:B)) | \pi(X), \pi(B) \right)$$
where $\E(\cdot  | \pi(X), \pi(B) )$ denotes the orthogonal projection (or conditional expectation) in $L^2(\tau_p)$ to the subalgebra of $\A_p$ generated by $\pi(X)$ and $\pi(B)$.
\end{proposition}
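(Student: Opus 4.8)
The plan is to verify the defining property \eqref{tautau} of the free score directly for the candidate $\tilde J \coloneqq k \E(\pi(J(X:B)) \mid \pi(X), \pi(B))$ in the compressed space $(\A_p, \tau_p)$. Since $\tilde J$ lies in the $L^2(\tau_p)$-closure of $\mathrm{Alg}(\pi(X), \pi(B))$ by construction, and the conditional expectation is redundant against test elements already in that algebra, it suffices to show
$$
\frac{d}{d\eps} \tau_p\big( Z\, P(\pi(X) + \eps Z, \pi(Y_1), \dots, \pi(Y_n)) \big)\big|_{\eps = 0} = \tau_p\big( \pi(J(X:B))\, P(\pi(X), \pi(Y_1), \dots, \pi(Y_n)) \big)
$$
for all $Y_i \in B$ and all noncommutative polynomials $P$, where $Z$ has mean zero and variance one in $\A_p$ and is free from $\pi(X), \pi(B)$ there. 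The right-hand side, once one replaces $\pi(J(X:B))$ by $\tilde J$, picks up exactly the factor $k$ from \eqref{taup-2}, so tracking the normalization will be a matter of bookkeeping.

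The key computational step is to rewrite both sides back ``upstairs'' in $(\A, \tau)$ using the homomorphism-like identities \eqref{piy} and the trace relation \eqref{taup-2}. A word $\pi(X)^{a_0} \pi(Y_{i_1}) \pi(X)^{a_1} \cdots$ in $\A_p$ equals, under $\pi$, the image of a word in which $p$ is inserted between every pair of consecutive letters and at both ends, e.g. $\pi(p X^{a_0} p Y_{i_1} p X^{a_1} p \cdots p)$; applying $\tau_p \circ \pi = k\tau$ converts a trace in $\A_p$ to $k$ times a trace of such a $p$-decorated word in $\A$. To handle the free variable $Z \in \A_p$ I would realize it as $\pi$ of a suitable element: concretely, extend $\A$ so that there is $W \in \A$ with $W = pWp$, free from $X, B, p$, and such that $\pi(W)$ has mean zero and variance one in $\A_p$ (the normalization forces $\tau(W^2 p) = k^{-1}$ after accounting for $\tau_p(\pi(W)^2) = k\tau(pWpWp) = k\tau((pWp)^2)$). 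Then the $\eps$-derivative becomes, after pushing through $\pi$, a sum over positions where a letter $pWp$ replaces a letter $pXp$ inside a $p$-decorated word, and $\tau_p(Z \cdot (\ldots)) = k\tau(W \cdot (\ldots))$ with a leading $p$ available to absorb.

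At this point the crucial input is Lemma \ref{indep-minor}: since $X, B, p$ are free and $W$ is free from all of them, the minors $\pi(X), \pi(B), \pi(W)$ are free in $\A_p$, so $\pi(W)$ genuinely serves as the auxiliary score variable for $\pi(X)$. The upstairs expression we obtain for $\frac{d}{d\eps}|_{\eps=0}$ is then $k$ times a quantity of the form $\frac{d}{d\delta}|_{\delta=0} \tau(W\, Q(X + \delta W, Y_1, \dots, Y_n, p))$ for a polynomial $Q$ in which $p$ appears as an extra ``parameter'' letter — but since $W$ is free from $p$ as well, and $p, B$ may be absorbed into an enlarged algebra $B' \coloneqq \mathrm{Alg}(B, p)$ from which $X, W$ are... wait, $X$ is not free from $p$ jointly with $B$ in the way Lemma \ref{freescore} wants — rather $X, B$ are free from $p$. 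So I would apply Lemma \ref{freescore} with $B' = \C[p]$: the score $J(X : B)$ equals $J(X : B, p)$, hence the defining identity \eqref{tautau} for $J(X:B)$ is available against polynomials in $X, Y_1, \dots, Y_n$ \emph{and} $p$. Feeding our $p$-decorated polynomial $Q$ into that identity collapses the $\eps$-derivative to $\tau(J(X:B)\, Q(X, Y_1, \dots, Y_n, p))$, and unwinding $Q$ back down via \eqref{piy}, \eqref{taup-2} yields precisely $\tau_p(\pi(J(X:B))\, P(\pi(X), \pi(Y_1), \dots, \pi(Y_n)))$, as required; the conditional expectation then re-enters for free since $P(\pi(X), \dots)$ lies in $\mathrm{Alg}(\pi(X), \pi(B))$.

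The main obstacle I anticipate is the bookkeeping of the $p$-insertions: one must check that when the derivation hits an interior occurrence of $\pi(X)$ versus a boundary occurrence, the pattern of $p$'s produced upstairs is consistently of the form to which the extended score identity \eqref{tautau} (with $p$ adjoined) applies — in particular that no ``bare'' $W$ without an adjacent $p$ ever appears, so that $\tau_p(Z\,\cdot) = k\tau(W\,\cdot)$ can always be converted and the cyclic-trace manipulations are legitimate. Getting the single overall factor of $k$ right, and confirming that it is $k$ and not $k^2$ or $1$, is the other place to be careful: it comes from one application of $\tau_p \circ \pi = k\tau$ on the right-hand side, with the variance normalization of $\pi(W)$ arranged precisely to leave no residual constant on the left.
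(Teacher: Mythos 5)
Your approach is essentially the paper's: you realize the auxiliary variable upstairs, convert the downstairs defining relation of the score to $(\A,\tau)$ via $\tau_p\circ\pi = k\tau$, invoke Lemma \ref{indep-minor} to get the needed freeness of the compressed auxiliary variable, and invoke Lemma \ref{freescore} with $B' = \C[p]$ so that $J(X:B)$ doubles as $J(X:\mathrm{Alg}(B,p))$, which is exactly what the $p$-decorated test polynomial $Q$ requires; the final step of reinserting the conditional expectation is also the same. The one genuine error is in your construction of the auxiliary variable: demanding both $W = pWp$ \emph{and} $W$ free from $p$ is impossible unless $W$ is scalar, since for centered $W$ free from $p$ one has $\tau(W^2) = \tau(pWp\,W) = \tau(p)^2\tau(W^2)$, forcing $\tau(W^2)=0$. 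The paper's fix is simply to drop the constraint $W = pWp$: take $Z \in \A$ centered of variance one and free from $X$, $B$, and $p$, and use $k^{1/2}\pi(Z)$ as the auxiliary variable in $\A_p$. Freeness gives $\tau_p(\pi(Z)^2) = k\,\tau(pZpZ) = k\cdot\tau(p)^2\tau(Z^2) = k^{-1}$, so $k^{1/2}\pi(Z)$ indeed has variance one; and this $k^{-1}$ damping of variance under $\pi$ — not the single factor of $k$ from $\tau_p\circ\pi$, which appears on both sides and cancels — is the true source of the overall $k$ in the formula for $J(\pi(X):\pi(B))$. With that replacement, your proof is the paper's.
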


\begin{proof}  Let $Z$ be a noncommutative random variable in $\A$ of mean zero and variance $1$ that is free from $X,B$; such a variable exists after extending $\A$ if necessary.  From Lemma \ref{indep-minor}, $k^{1/2} \pi(Z) \in \A_p$ has mean zero and variance $1$, and is free from $\pi(X), \pi(B)$.   By definition of free score, it thus suffices to establish the identity
\begin{align*}
& \frac{d}{d\eps} \tau_p\left( k^{1/2} \pi(Z) P( \pi(X)+k^{1/2}\eps \pi(Z), \pi(B)) \right)|_{\eps=0}\\
&\quad  = \tau_p\left( k \E\left( \pi(J(X:B)) | \pi(X), \pi(B) \right) P(\pi(X), \pi(B) )\right)
\end{align*}
for any polynomial $P( \pi(X), \pi(B) )$.  By the chain rule we may cancel the factors of $k^{1/2}, k$, and as $P(\pi(X), \pi(B) )$ lies in the range of the orthogonal projection $\E( | \pi(X), \pi(B) )$ we may delete the projection, thus we now need to show
$$ \frac{d}{d\eps} \tau_p\left( \pi(Z) P( \pi(X)+\eps \pi(Z), \pi(B)) \right)|_{\eps=0} = \tau_p\left( \pi(J(X:B)) P(\pi(X), \pi(B) )\right).$$
Using the definition of $\tau_p$ and $\pi$ and the idempotent nature of $p$ this is equivalent to
$$ \frac{d}{d\eps} \tau\left( \pi(Z) P( p (X + \eps Z) p, p B p) \right)|_{\eps=0} = \tau\left( J(X:B) P(p X p, p B p )\right).$$
By Lemma \ref{freescore}, $J(X:B,p)$ exists and is equal to $J(X:B)$.  Applying the definition of free score to the polynomial $P( p (X + \eps Z) p, p B p)$, we obtain the claim.
\end{proof}

Specializing this proposition to the case when $B = \C$, we conclude that (if the free score $J(X)$ exists)
$$ J(\pi(X)) = k \E( \pi(J(X)) | \pi(X) )$$
and hence by Pythagoras' theorem
$$ \Phi( \pi(X) ) = \| J(\pi(X)) \|_{L^2(\tau_p)}^2 \leq k^2 \| \pi(J(X))\|_{L^2(\tau_p)}^2.$$
As $J(X)$ lies in the closure of the algebra generated by $X$, it is free of $p$, thus by \eqref{taup-2} and free independence
$$ \| \pi(J(X))\|_{L^2(\tau_p)}^2 = k \tau( p J(X) p J(X) p ) = k^{-1} \tau(J(X)^2).$$
We conclude the inequality
\begin{equation}\label{phip}
\Phi( \pi(X) ) \leq k \Phi(X).
\end{equation}
Using the easily verified scaling
\begin{equation}\label{easy}
 \Phi(\lambda X) = \lambda^{-2} \Phi(X)
\end{equation}
for any $\lambda>0$, we conclude that
$$ \Phi( k^{1/2} \pi(X) ) \leq \Phi(X)$$
whenever $J(X)$ exists.  Clearly this inequality also holds when $J(X)$ does not exist, since the right-hand side is infinite.  We thus have
\begin{equation}\label{ko}
\Phi( k^{-1/2}_* \mu^{\boxplus k} ) \leq \Phi(\mu)
\end{equation}
for any $k \geq 1$ and any compactly supported $\mu$. Rescaling using \eqref{mono}, \eqref{easy} we obtain the non-increasing nature of $\Phi(\chi( k^{-1/2}_* \mu^{\boxplus k} )$.  To obtain the corresponding monotonicity for free entropy, we use \eqref{tik}, \eqref{easy} to compute
\begin{align*}
 \chi(k^{-1/2}_* \mu^{\boxplus k}) &= \frac{1}{2} \int_0^\infty \left(\frac{1}{1+t} - \Phi(k^{-1/2}_* \mu^{\boxplus k} \boxplus \sqrt{t}_* \semicirc)\right)\ dt + \frac{1}{2} \log 2\pi e \\
&= \frac{1}{2} \int_0^\infty \left(\frac{1}{1+t} - \Phi(k^{-1/2}_* (\mu \boxplus \sqrt{t}_* \semicirc)^{\boxplus k})\right)\ dt + \frac{1}{2} \log 2\pi e 
\end{align*}
and the non-increasing nature of $\chi(k^{-1/2}_* \mu^{\boxplus k})$ then follows from the non-increasing nature of $\Phi(k^{-1/2}_* (\mu \boxplus \sqrt{t}_* \semicirc)^{\boxplus k}))$ for each $t \geq 0$.

The above argument generalizes to also obtain analogous monotonicity properties for the (non-microstate) free entropy and free Fisher information of several variables.  We recall from \cite{voi-score} that the \emph{relative free Fisher information} $\Phi^*(X:B)$ of a noncommutative real random variable $X \in \A$ relative to an algebra $B$ is given by the formula
$$ \Phi^*(X:B) = \| J(X:B) \|_{L^2(\tau)}^2 = \tau( J(X:B)^2 ),$$
and the non-microstate free Fisher information $\Phi^*(X_1,\dots,X_n)$ of a finite number of noncommutative real random variables $X_1,\dots,X_n \in \A$ is given by the formula
\begin{equation}\label{ai}
 \Phi^*(X_1,\dots,X_n) \coloneqq \sum_{i=1}^n \Phi^*(X_i : X_1,\dots,X_{i-1},X_{i+1},\dots,X_n).
\end{equation}
The corresponding non-microstate free entropy $\chi^*(X_1,\dots,X_n)$ is then defined as
$$ 
 \chi^*(X_1,\dots,X_n) = \frac{1}{2} \int_0^\infty\left(\frac{n}{1+t} - \Phi^*(X_1 + t^{1/2} Z_1, \dots, X_n + t^{1/2} Z_n)\right)\ dt + \frac{n}{2} \log 2\pi e$$
where $Z_1,\dots,Z_n$ are semicircular elements that are free from each other and from $X_1,\dots,X_n$. 

\begin{theorem}[Monotonicity for several variables]\label{sev-var} If $X_1,\dots,X_n \in \A$, $k \geq 1$, and $p$ is a real projection of trace $1/k$ that is free from $X_1,\dots,X_n$, one has
$$ \Phi^*( k^{1/2} \pi(X_1), \dots, k^{1/2} \pi(X_n) ) \leq \Phi^*( X_1,\dots,X_n)$$
and
$$ \chi^*(k^{1/2} \pi(X_1),\dots, k^{1/2} \pi(X_n) ) \geq \chi^*( X_1,\dots,X_n).$$
\end{theorem}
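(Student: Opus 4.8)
The plan is to bootstrap the several-variable statement from the single-variable machinery already assembled, principally Proposition \ref{score-minor} and Lemma \ref{indep-minor}, applied one coordinate at a time.

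First I would prove the Fisher information inequality. If $\Phi^*(X_1,\dots,X_n) = +\infty$ there is nothing to show, so assume it is finite; then for each $i$ the score $J(X_i:B_i)$ exists, where $B_i \coloneqq \mathrm{Alg}(X_1,\dots,X_{i-1},X_{i+1},\dots,X_n)$. Since $p$ is free from $\mathrm{Alg}(X_1,\dots,X_n) = \mathrm{Alg}(X_i,B_i)$, Proposition \ref{score-minor} applies and gives $J(\pi(X_i):\pi(B_i)) = k\,\E(\pi(J(X_i:B_i)) \mid \pi(X_i),\pi(B_i))$. Using that the orthogonal projection is an $L^2(\tau_p)$-contraction, that $\pi(B_i) = \mathrm{Alg}(\pi(X_1),\dots,\widehat{\pi(X_i)},\dots,\pi(X_n))$ by iterating the homomorphism-like identity \eqref{piy}, and that $J(X_i:B_i)$ — lying in the $L^2$-closure of $\mathrm{Alg}(X_1,\dots,X_n)$, hence free of $p$, and trace-free by \eqref{score-trace} — satisfies $\|\pi(J(X_i:B_i))\|_{L^2(\tau_p)}^2 = k^{-1}\tau(J(X_i:B_i)^2)$ by the same free independence computation as in the one-variable case, I obtain $\Phi^*(\pi(X_i):\pi(B_i)) \le k\,\Phi^*(X_i:B_i)$. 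Summing over $i$ via \eqref{ai} gives $\Phi^*(\pi(X_1),\dots,\pi(X_n)) \le k\,\Phi^*(X_1,\dots,X_n)$, and the multivariable analog of the scaling \eqref{easy}, namely $\Phi^*(\lambda X_1,\dots,\lambda X_n) = \lambda^{-2}\Phi^*(X_1,\dots,X_n)$ (immediate from $J(\lambda X:B) = \lambda^{-1}J(X:B)$), upgrades this to $\Phi^*(k^{1/2}\pi(X_1),\dots,k^{1/2}\pi(X_n)) \le \Phi^*(X_1,\dots,X_n)$.

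For the entropy inequality I would feed the Fisher-information bound into the integral formula defining $\chi^*$. Extend $\A$ so that there are semicircular elements $Z_1,\dots,Z_n$, free from one another and from $\mathrm{Alg}(X_1,\dots,X_n,p)$. By Lemma \ref{indep-minor}, the compressed algebras $\pi(\mathrm{Alg}(X_1,\dots,X_n)), \pi(\mathrm{Alg}(Z_1)),\dots,\pi(\mathrm{Alg}(Z_n))$ are free in $\A_p$; moreover \eqref{kip} together with the cumulant scaling $\kappa_m(\lambda_* \nu) = \lambda^m \kappa_m(\nu)$ show that each $W_i \coloneqq k^{1/2}\pi(Z_i)$ is again a standard semicircular element. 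Hence $W_1,\dots,W_n$ are semicircular, free from each other and (jointly) from $\pi(X_1),\dots,\pi(X_n)$, so they form an admissible choice in the definition of $\chi^*(k^{1/2}\pi(X_1),\dots,k^{1/2}\pi(X_n))$. Since $\pi$ is linear, $k^{1/2}\pi(X_i)+t^{1/2}W_i = k^{1/2}\pi(X_i+t^{1/2}Z_i)$, and applying the Fisher-information inequality just proved, with $X_i$ replaced by $X_i+t^{1/2}Z_i$ (still free of $p$), gives $\Phi^*(k^{1/2}\pi(X_1)+t^{1/2}W_1,\dots,k^{1/2}\pi(X_n)+t^{1/2}W_n) \le \Phi^*(X_1+t^{1/2}Z_1,\dots,X_n+t^{1/2}Z_n)$ for every $t \ge 0$. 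Integrating this pointwise inequality against $\tfrac12\,dt$ inside the definition of $\chi^*$ yields $\chi^*(k^{1/2}\pi(X_1),\dots,k^{1/2}\pi(X_n)) \ge \chi^*(X_1,\dots,X_n)$.

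The substantive content all sits in Proposition \ref{score-minor}, which is already available, so what remains is essentially bookkeeping; the one place that deserves genuine care is the matching of algebras. Specifically, one must check that the orthogonal projection appearing in Proposition \ref{score-minor} lands in exactly the algebra generated by $\mathrm{Alg}(\pi(X_i),\pi(B_i)) = \mathrm{Alg}(\pi(X_1),\dots,\pi(X_n))$, so that the per-coordinate estimates add up to the correct $\Phi^*$ of the compressed family, and — for the entropy statement — that $W_i = k^{1/2}\pi(Z_i)$ really are semicircular and sit in the correct joint free position to be legitimately used inside the (choice-independent) definition of $\chi^*$.
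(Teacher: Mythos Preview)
Your overall strategy matches the paper's: reduce to a per-coordinate score estimate via Proposition~\ref{score-minor}, then sum and integrate. The entropy argument you spell out is correct and is exactly what the paper means by ``repeating the previous arguments.''

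There is, however, one genuine gap in the Fisher-information part. You assert that $\pi(B_i) = \mathrm{Alg}(\pi(X_1),\dots,\widehat{\pi(X_i)},\dots,\pi(X_n))$, citing identity~\eqref{piy}. This equality is false in general: $\pi$ is not a homomorphism, so for example $\pi(X_1X_2)=[pX_1X_2p]$ lies in $\pi(B_i)$ but is typically \emph{not} in the algebra generated by $\pi(X_1)=[pX_1p]$ and $\pi(X_2)=[pX_2p]$, whose products look like $[pX_1pX_2p]$ instead. Identity~\eqref{piy} only gives $\pi(X_1)\pi(X_2)=\pi(X_1pX_2)$, which does not help since $X_1pX_2\notin B_i$. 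What is true is just the inclusion
\[
B' \coloneqq \mathrm{Alg}\bigl(\pi(X_j): j\neq i\bigr) \ \subseteq\ \mathrm{Alg}\bigl(\pi(B_i)\bigr),
\]
and that is all the paper uses: since $B'$ is a subalgebra, the score $J(\pi(X_i):B')$ exists and equals the conditional expectation of $J(\pi(X_i):\pi(B_i))$ onto the $L^2$-closure of $\mathrm{Alg}(\pi(X_i),B')$, so one further application of Pythagoras gives
\[
\|J(\pi(X_i):B')\|_{L^2(\tau_p)} \ \le\ \|J(\pi(X_i):\pi(B_i))\|_{L^2(\tau_p)},
\]
after which the rest of your argument goes through unchanged. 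You correctly flagged this spot as ``the one place that deserves genuine care,'' but the resolution is the inclusion plus an extra projection step, not the claimed equality.
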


We remark that an easy rescaling gives the equivalent forms
$$ \Phi^*( \pi(X_1), \dots, \pi(X_n) ) \leq k \Phi^*( X_1,\dots,X_n)$$
and
$$ \chi^*(\pi(X_1),\dots, \pi(X_n) ) \geq \chi^*( X_1,\dots,X_n) -  \frac{n}{2} \log k.$$
of these inequalities.

\begin{proof} It suffices to prove the former inequality, as the latter follows by repeating the previous arguments.  From \eqref{ai} it suffices to show that
\begin{gather*} \Phi^*(k^{1/2} \pi(X_i) : \pi(X_1),\dots,\pi(X_{i-1}),\pi(X_{i+1}),\dots,\pi(X_n)) \\ 
\leq \Phi^*(X_i : X_1,\dots,X_{i-1},X_{i+1},\dots,X_n)\end{gather*}
for each $i=1,\dots,n$.  Let $B$ be the algebra generated by $X_1,\dots,X_{i-1},X_{i+1},\dots,X_n$, then we can rewrite this inequality as
$$ k^{-1} \| J( \pi(X_i) : \pi(X_1),\dots,\pi(X_{i-1}),\pi(X_{i+1}),\dots,\pi(X_n)) \|_{L^2(\tau_p)}^2
\leq \| J(X_i:B) \|_{L^2(\tau)}^2.$$
From Proposition \ref{score-minor} and Pythagoras' theorem we see that if $J(X_i:B)$ exists, then so does $J(\pi(X_i):\pi(B))$ and
$$ \| J(\pi(X_i):\pi(B)) \|_{L^2(\tau_p)}^2 \leq k^2 \| \pi(J(X:B)) \|_{L^2(\tau_p)}^2
= k \| J(X:B) \|_{L^2(\tau)}^2$$
where we as before we use the fact that $J(X:B)$ is in the closure of the algebra generated by $X_1,\dots,X_n$ and is hence free of $p$.

The algebra $B'$ generated by $\pi(X_1),\dots,\pi(X_{i-1}),\pi(X_{i+1}),\dots,\pi(X_n)$ is a subalgebra of $\pi(B)$, hence the score $J(\pi(X_i):B')$ exists and is a projection of $J(\pi(X_i):\pi(B))$.  By a further application of Pythagoras, we conclude that
$$
\| J( \pi(X_i) : \pi(X_1),\dots,\pi(X_{i-1}),\pi(X_{i+1}),\dots,\pi(X_n)) \|_{L^2(\tau_p)}^2 \leq
k \| J(X:B) \|_{L^2(\tau)}^2$$
and the claim follows.
\end{proof}

\begin{remark}
Appendix \ref{sec-microstates} establishes monotonicity of entropy for $n$-tuples for the so-called microstates free entropy $\chi$, introduced by Voiculescu in \cite{voi-entropy2}.
\end{remark}

Returning to the case of a single variable, we can analyze the above proof of monotonicity further to extract when equality occurs:

\begin{proposition}[Characterization of equality]\label{equality}  Let $\mu$ be a compactly supported real probability measure with $\Phi(\mu) < \infty$, and let $k > 1$.  If $\Phi(k^{-1/2}_* \mu^{\boxplus k}) = \Phi(\mu)$, then $\mu$ is the law of $\alpha + \beta u$ for some semicircular element $u$, real $\alpha$, and $\beta>0$.
\end{proposition}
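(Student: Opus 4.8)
The plan is to read off the equality case from the chain of (in)equalities in Section~\ref{score-proof} that produced \eqref{ko}. Fix $(\A,\tau)$ containing $X$ of law $\mu$ and a real projection $p$ of trace $1/k$ free from $X$; by Proposition~\ref{minor-free} the element $k^{1/2}\pi(X)$ has law $k^{-1/2}_*\mu^{\boxplus k}$, and by \eqref{easy} we have $\Phi(k^{1/2}\pi(X)) = k^{-1}\Phi(\pi(X))$, so the hypothesis $\Phi(k^{-1/2}_*\mu^{\boxplus k}) = \Phi(\mu)$ is precisely the assertion that equality holds in $\Phi(\pi(X)) \le k\Phi(X)$, i.e.\ in \eqref{phip}. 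Since $\Phi(\mu)<\infty$ the score $J(X)$ exists and lies in the $L^2(\tau)$-closure of the polynomials in the self-adjoint element $X$; write $J(X) = f(X)$ for a real-valued $f\in L^2(\mu)$. The only inequality used in deriving \eqref{phip} was the Pythagoras step applied to $J(\pi(X)) = k\,\E(\pi(f(X))\mid\pi(X))$; equality therefore forces $\E(\pi(f(X))\mid\pi(X)) = \pi(f(X))$, that is, the element $[pf(X)p] = \pi(f(X))$ lies in the $L^2(\tau_p)$-closure of $\mathrm{Alg}(\pi(X))$. Since $\pi(X)$ is self-adjoint in $\A_p$, this says exactly that $[pf(X)p]$ is an $L^2$-function of $[pXp]$.

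The core of the argument is then the rigidity statement: if $X$ is free from a nontrivial projection $p$ and $[pf(X)p]$ is a function of $[pXp]$, then $f$ is affine on $\mathrm{supp}(\mu)$. To prove it I would use that the monotonicity already established in Section~\ref{score-proof} makes $t\mapsto\Phi(t^{-1/2}_*\mu^{\boxplus t})$ non-increasing on $[1,\infty)$ with equal values at $t = 1$ and $t = k$; hence it is constant on $[1,k]$, and by the previous paragraph, for every $t\in[1,k]$ and every trace-$1/t$ projection $p_t$ free from $X$, the element $[p_t f(X) p_t]$ is a function of $[p_t X p_t]$. Expanding both sides in the parameter $t - 1$ as $t\to 1^{+}$ (where $p_t\to 1$ and $\pi_t(X)\to X$) and using the identities \eqref{piy} together with freeness of $X$ from $p_t$ to evaluate the relevant mixed moments, the constraint unwinds into a hierarchy of identities; writing $f = (\text{affine part}) + r$ with $r$ orthogonal to $1$ and $x$ in $L^2(\mu)$, one checks $[p_t r(X) p_t]$ is orthogonal to $1$ and $[p_t X p_t]$ in $L^2(\tau_{p_t})$, and the leading nontrivial order of the constraint forces the ``off-diagonal'' pieces $[pX(1-p)Xp]$ (and their higher analogues) to be expressible through $[pXp]$, which is possible only if $r\equiv 0$; thus $f(x) = \alpha' x + \beta'$ on $\mathrm{supp}(\mu)$.

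It remains to observe that an affine free score forces $\mu$ to be semicircular. Plugging $P(X) = X^n$ into the conjugate-variable relation \eqref{colon} with $J(X) = \alpha' X + \beta'$ gives $\alpha'\tau(X^{n+1}) + \beta'\tau(X^n) = \sum_{j=0}^{n-1}\tau(X^j)\tau(X^{n-1-j})$ for all $n\ge 0$; the cases $n = 0$ and $n = 1$ yield $\beta' = -\alpha'\tau(X)$ and $\alpha'\Var(X) = 1$, so $\alpha' = 1/\Var(X) > 0$, and the resulting recursion determines every moment of $\mu$ from $\tau(X)$ and $\Var(X)$. Since these moments coincide with those of the semicircular law of mean $\tau(X)$ and variance $\Var(X)$ (whose free score is $(X - \tau(X))/\Var(X)$), $\mu$ is the law of $\alpha + \beta u$ with $u$ a standard semicircular element, $\alpha = \tau(X)$ and $\beta = \sqrt{\Var(X)} > 0$.

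The main obstacle is the rigidity statement of the second paragraph: quantifying precisely how $[pf(X)p]$ fails to be a function of $[pXp]$ when $f$ has a genuine nonlinear part, and making the perturbative expansion at $t = 1$ — or, alternatively, a direct structural analysis of the free compression $pXp$ — rigorous. The bookkeeping in the first paragraph and the classical ``affine free score $\Rightarrow$ semicircular'' step in the third are routine by comparison.
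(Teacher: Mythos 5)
Your first paragraph is correct and matches the paper exactly: equality in $\Phi(k^{-1/2}_*\mu^{\boxplus k})=\Phi(\mu)$ forces equality in the Pythagoras step of the proof of \eqref{ko}, which means $\pi(J(X))$ lies in the $L^2(\tau_p)$-closure of $\mathrm{Alg}(\pi(X))$. Your third paragraph (affine free score $\Rightarrow$ semicircular) is also essentially correct, and runs parallel to the paper's Cauchy-transform computation; a small point is that in the paper one first reduces by translation to $\tau(X)=0$, so that after the mean-zero reduction the score being a scalar multiple of $X$ is all one needs.

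The gap is your second paragraph, and you correctly flag it as such: the proposed ``rigidity of compressions'' statement — that if $[pf(X)p]$ is an $L^2$-function of $[pXp]$ then $f$ is affine — is exactly where the real work lies, and neither the continuity-in-$t$ argument nor the perturbative expansion around $t=1$ is actually carried out. The paper sidesteps this entirely and does not prove (or need) such a general rigidity theorem. Instead it extracts a single consequence of $\pi(J(X))$ being in the $L^2$-closure of $\mathrm{Alg}(\pi(X))$: since $\pi(X)$ is self-adjoint, that algebra is abelian, so $\pi(J(X))$ and $\pi(X)$ commute, giving
\begin{equation*}
\tau_p\big(\pi(J(X))\,\pi(J(X))\,\pi(X)\,\pi(X)\big) = \tau_p\big(\pi(J(X))\,\pi(X)\,\pi(J(X))\,\pi(X)\big).
\end{equation*}
Unwinding the definitions, this becomes a fourth-order mixed trace of $p$'s, $X$'s, and $J(X)$'s. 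Writing $p=\tfrac1k + p'$ with $p'$ trace-free, and using that $X$ and $J(X)$ are free of $p$ and (after translating to mean zero and invoking \eqref{score-trace}) both trace-free, the only surviving terms on each side are the $p'$-free term and the two-$p'$ terms where the $p'$'s are cyclically separated. After the commuting $p'$-free terms cancel, one is left with
\begin{equation*}
\tau\big((p')^2\big)\,\tau(J(X)^2)\,\tau(X^2)=\tau\big((p')^2\big)\,\tau(J(X)X)^2,
\end{equation*}
i.e.\ $\tau(J(X)^2)\tau(X^2)=\tau(J(X)X)^2$, which is the equality case of Cauchy--Schwarz in $L^2(\tau)$; hence $J(X)=\alpha X$ for some scalar $\alpha$. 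This is a much shorter and self-contained route to the affine score than the rigidity statement you propose, and it uses only one well-chosen moment identity plus freeness, rather than an infinite hierarchy of constraints.
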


Conversely, it is easy to see that if $\mu$ is the law of $\alpha+\beta u$ for a semicircular $u$, then $k^{-1/2}_* \mu^{\boxplus k}$ is the law of $k^{1/2} \alpha + \beta u$, so that $\Phi(k^{-1/2}_* \mu^{\boxplus k}) = \Phi(\mu)$.  Using the representation \eqref{tik} we see that we also have an analogous claim with the free Fisher information $\Phi$ replaced by the free entropy $\chi$.

\begin{proof}  By translating $\mu$ (which does not affect the free Fisher information of $\mu$ or $k^{-1/2}_* \mu^{\boxplus k}$) we may assume that $\mu$ has mean zero.  We can also assume that $\mu$ is not a point mass as the free Fisher information is infinite in that case.  Inspecting the proof of \eqref{ko}, we must have
$$ \tau_p( (k \E( \pi(J(X)) | \pi(X) ))^2 ) \leq k^2 \tau_p( \pi(J(X))^2 )$$
and thus $\pi(J(X))$ lies in the $L^2$ closure of the algebra generated by $\pi(X)$.  In particular, these two variables commute, so that
$$ \tau_p( \pi(J(X)) \pi(J(X)) \pi(X) \pi(X) ) = 
\tau_p( \pi(J(X)) \pi(X) \pi(J(X)) \pi(X) ) 
$$
(note that both sides are finite by Cauchy-Schwarz); by \eqref{taup} we thus have
\begin{equation}\label{pjx}
 \tau( p J(X) p J(X) p X p X ) = \tau( p J(X) p X p J(X) p X ).
\end{equation}
The variables $X, J(X)$ are free of $p$ (since $J(X)$ lies in the closure of the algebra generated by $X$), and have trace zero by hypothesis and \eqref{score-trace}.  Splitting $p$ into the trace $1/k$ and the trace-free part $p' \coloneqq p - \frac{1}{k}$, we obtain $2^4$ terms, but from free independence the only terms that survive are those that involve either zero or two copies of $p'$, and in the latter case the $p'$ terms need to be separated from each other cyclically by two of the $X,J(X)$ factors.  In other words, we have
\begin{align*} \tau( p J(X) p J(X) p X p X ) &= k^{-4} \tau(J(X)^2 X^2) + k^{-2} \tau(p' J(X)^2 p' X^2 ) \\
&\quad + k^{-2} \tau( J(X) p' J(X) X p' X )
\end{align*}
and similarly
\begin{align*} \tau( p J(X) p X p J(X) p X ) &= k^{-4} \tau(J(X) X J(X) X) + k^{-2} \tau(p' J(X) X p' J(X) X ) \\
&\quad + k^{-2} \tau( J(X) p' X J(X) p' X ).
\end{align*}
Applying these identities to \eqref{pjx} and noting that $J(X)$ commutes with $X$, we conclude that
$$ \tau(p' J(X)^2 p' X^2 ) = \tau(p' J(X) X p' J(X) X ).$$
From free independence we see that
$$ \tau(p' J(X)^2 p' X^2 ) = \tau((p')^2) \tau(J(X)^2) \tau(X^2)$$
and similarly
$$ \tau(p' J(X)X p' J(X)X ) = \tau((p')^2) \tau(J(X)X) \tau(J(X)X).$$
Thus we have
$$\tau(J(X)^2) \tau(X^2) = \tau(J(X)X) \tau(J(X)X)$$
which by the converse to Cauchy-Schwarz applied to the $L^2(\tau)$ inner product implies that $J(X)$ is a scalar multiple of $X$.  To finish the proof, we can either invoke the equality case of the free Stam inequality in \cite{voi-score}, or argue as follows.
The identity $J(X)=\alpha X$ for a scalar $\alpha$ implies that, if $\mu$ is the law of $X$, 
\begin{align*}\int \frac{x}{z-x} d\mu &= \tau (X (z-X)^{-1})\\
&= \alpha \tau\otimes \tau (\partial (z-X)^{-1}) \\ &= \alpha \iint \frac{ \frac{1}{z-s} - \frac{1}{z-t}}{s-t} d\mu (s) d\mu (t) \\
&= \alpha \iint \frac{1}{(z-s)(z-t)} d\mu (s) d\mu(t)  \\
&= \alpha G_\mu(z)^2,
\end{align*} 
where $G_\mu (z) = \int \frac{1}{z-s}d\mu(s)$ is the Cauchy transform.  Using the identity $$\int \frac{x}{z-x}d\mu(x)=\int \frac {x-z}{z-x}d\mu(x) + \int \frac{z}{z-x}d\mu(x) = -1+zG_\mu(z)$$ we deduce that $$ -1+zG_\mu(z)=\alpha G_\mu^2(z).$$ 
Solving this quadratic equation for $G_\mu$ (recalling that $G_\mu$ maps the upper half-plane to the lower half-plane and that $\mu$ is a probability measure) shows that $\mu$ is a scalar multiple of the semicircle law.
\end{proof}

It remains an interesting open problem to obtain an analogous characterization of equality in Theorem \ref{sev-var}.

\section{Complex analytic proof}\label{complex-sec}

We now give a direct proof of Theorem \ref{main} using the differential equation \eqref{diffeq-2}.  To avoid technicalities we will work at a somewhat formal level, ignoring some questions of convergence and regularity, or justifying operations such as integration by parts, although we will still need to be careful when handling the limiting contribution of singular integrals involving kernels such as $\frac{1}{z-w}$ or $\frac{1}{(z-w)^2}$ when $z,w$ are close.  We will also assume that the measures $k^{-1/2}_* \mu^{\boxplus k}$ take the absolutely continuous form
$$ d(k^{-1/2}_* \mu^{\boxplus k}) = f_k(x)\ dx$$
for $k \geq 1$ and $x \in \R$, where $f_k$ is compactly supported in $x$ for each $k$ and is assumed to obey sufficient regularity\footnote{It is likely that these regularity hypotheses can be removed by a limiting argument to recover Theorem \ref{main} in full generality.  For instance, one can take advantage of the fact that if $d\nu_\eps = \frac{1}{\pi} \frac{\eps}{x^2+\eps^2}\ dx$ is the Cauchy distribution with parameter $\eps>0$, then $\mu * \nu_\eps = \mu \boxplus \nu_\eps$ and $(\mu * \nu_\eps)^{\boxplus k} = \mu^{\boxplus k} * \nu_{k\eps}$, so one can apply the arguments in this section to the smooth measure $\mu * \nu_\eps$ (after carefully taking into account that this measure is no longer compactly supported), and then taking limits as $\eps \to 0$.  We leave the details to the interested reader.} in $k,x$ to justify the manipulations in the sequel.  We abbreviate
\begin{equation}\label{G-form}
 G_k(z) \coloneqq G_{k^{-1/2}_* \mu^{\boxplus k}}(z) = \int_\R \frac{f_k(x)}{z-x}\ dx.
\end{equation}
As is well known, the limiting values
$$ G_k(y+i0^\pm) \coloneqq \lim_{\eps \to 0^+} G_k(y \pm i \eps)$$
for either choice of sign $\pm$ are then given (for sufficiently regular $f$) by the Plemelj formulae
\begin{equation}\label{plemelj}
 G_k(y+i0^\pm) = \pi H f_k(y) \mp \pi i f_k(y)
\end{equation}
where 
\begin{equation}\label{H-def}
 Hf(y) \coloneqq \mathrm{p.v.} \frac{1}{\pi} \int_\R \frac{f(x)}{y-x}\ dx
\end{equation}
is the Hilbert transform of $f$.  We recall some basic identities about this Hilbert transform:

\begin{lemma}[Hilbert transform identities]\label{hilb}  If $f: \R \to \R$ is compactly supported and sufficiently regular, then one has the identities
\begin{align*}
\int_\R f(y) Hf(y)\ dy & = 0 \\
\int_\R f(y) (Hf(y))^2\ dy &= \frac{1}{3} \int_\R f(y)^3\ dy \\
H( fHf) &= \frac{(Hf)^2 - f^2}{2}
\end{align*}
\end{lemma}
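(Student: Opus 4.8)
The plan is to derive all three identities from two ingredients: the antisymmetry of the convolution kernel $\frac{1}{y-x}$, and the analytic properties of the Cauchy transform $F(z) \coloneqq \frac{1}{\pi}\int_\R \frac{f(x)}{z-x}\,dx$. This $F$ is holomorphic on $\C\setminus\mathrm{supp}(f)$, decays like $O(|z|^{-1})$ at infinity, and by the Plemelj formula \eqref{plemelj} (divided by $\pi$) has boundary values $F(y+i0^{\pm}) = Hf(y) \mp i f(y)$; in particular $F(y-i0^+) = \overline{F(y+i0^+)}$ since $f$ and $Hf$ are real-valued.

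For the first identity I would simply unfold the definition of $H$ to write $\int_\R f(y)Hf(y)\,dy = \frac1\pi\,\mathrm{p.v.}\iint_{\R^2}\frac{f(x)f(y)}{y-x}\,dx\,dy$, and then observe that the integrand is odd while the domain is even under the interchange $x\leftrightarrow y$, so the principal value integral vanishes. The same manipulation with two different functions gives the skew-adjointness relation $\int_\R (Hg)\,h\,dy = -\int_\R g\,(Hh)\,dy$, which I will reuse below. I would establish the third identity next, and then deduce the second from it.

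For the third identity I would exploit that $F^2$ is holomorphic on $\C\setminus\mathrm{supp}(f)$ and is $O(|z|^{-2})$ at infinity, so Cauchy's theorem (collapsing a large contour onto $\mathrm{supp}(f)$, with the circle at infinity contributing nothing) gives the representation $F(z)^2 = \frac{1}{2\pi i}\int_\R \frac{F(x+i0^+)^2 - F(x-i0^+)^2}{x-z}\,dx$. Here the jump is $F(x+i0^+)^2 - F(x-i0^+)^2 = 2i\,\mathrm{Im}\big((Hf(x)-if(x))^2\big) = -4i\,f(x)Hf(x)$, so $F(z)^2 = \frac{2}{\pi}\int_\R \frac{f(x)Hf(x)}{z-x}\,dx$; that is, $\frac12 F^2$ is itself the Cauchy transform of $fHf$. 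Taking boundary values as $z\to y+i0^+$ and applying Plemelj once more yields $F(y+i0^+)^2 = 2\big(H(fHf)(y) - i\,f(y)Hf(y)\big)$, whereas expanding the square directly gives $F(y+i0^+)^2 = (Hf(y))^2 - f(y)^2 - 2i\,f(y)Hf(y)$. Comparing real parts gives $H(fHf) = \frac{(Hf)^2 - f^2}{2}$ (the imaginary parts agreeing being a consistency check).

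Finally, for the second identity I would pair the third identity with $f$ and integrate over $\R$: the right-hand side becomes $\frac12\int_\R f(Hf)^2\,dy - \frac12\int_\R f^3\,dy$, while the left-hand side, by skew-adjointness, equals $\int_\R f\,H(fHf)\,dy = -\int_\R (Hf)(fHf)\,dy = -\int_\R f(Hf)^2\,dy$; solving the resulting linear equation gives $\int_\R f(Hf)^2\,dy = \frac13\int_\R f^3\,dy$. The only step I expect to require real care is the contour-and-Plemelj argument behind the third identity: one needs enough regularity of $f$ (H\"older continuity, via Privalov's theorem) for the non-tangential boundary values and principal-value integrals to exist and for the jump formula to be valid, while the decay $F^2 = O(|z|^{-2})$ is exactly what kills the contribution of the circle at infinity. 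Given the paper's standing ``formal'' conventions in this section, these points can be treated lightly.
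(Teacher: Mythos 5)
Your argument is correct, and it reaches the same three identities by a mildly but genuinely different decomposition. The paper's proof extracts the first two identities from the vanishing of $\int_\R G(y+i0^+)^n\,dy$ for $n=2,3$ (contour shift, then imaginary parts of Plemelj), and gets the third by observing that $G_f^2$ and $G_{2\pi fHf}$ are both holomorphic vanishing at infinity with matching boundary imaginary parts, hence equal. You instead derive the first identity directly from antisymmetry of the kernel $1/(y-x)$, establish the third identity by representing $F^2$ as the Cauchy transform of its own jump $-4if\,Hf$ across $\R$ and reading off real parts from Plemelj, and then deduce the second identity by pairing the third against $f$ and invoking skew-adjointness of $H$. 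The two strategies are essentially dual: the paper's uniqueness argument ("same $\mathrm{Im}$ on $\R$, both vanish at $\infty$, hence equal") is the same analytic fact as your Cauchy integral representation $F^2(z) = \frac{1}{2\pi i}\int_\R \frac{F(x+i0^+)^2 - F(x-i0^+)^2}{x-z}\,dx$, just phrased implicitly rather than explicitly. What your route buys is a more self-contained derivation of the third identity (no appeal to comparing against the a priori constructed auxiliary function $G_{2\pi fHf}$), a cheaper first identity, and the pleasant observation that the second identity follows formally from the third by duality; what the paper's route buys is that the first two identities drop out in one line each from $\int G^n = 0$, with the third identity standing alone. Both require the same regularity hypotheses (enough smoothness for Plemelj boundary values and principal-value integrals), which the section's standing formal conventions license.
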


\begin{proof}  Setting $G(z) \coloneqq \int_\R \frac{f(x)}{z-x}\ dx$, then by contour shifting we have
$$ \int_\R G(y+i0^+)^2\ dy = 0$$
and
$$ \int_\R G(y+i0^+)^3\ dy = 0.$$
Substituting the Plemelj formula $G(y+i0^+) = \pi H f - \pi i f$, and taking imaginary parts of both identities, we obtain the first two claims. For the final claim, we square \eqref{plemelj} to conclude that
$$ G_f(y+i0^+)^2 = \pi^2 (Hf^2 - f^2) - 2 \pi i f Hf$$
and compare this function against the function
$$ G_{2\pi f Hf}(y + i0^+) = 2 \pi^2 H(fHf) - 2 \pi i f Hf,$$
to conclude that the two holomorphic functions $G_f^2$, $G_{2\pi f Hf}$ (that both vanish at infinity) have identical imaginary parts on the half-plane, and are thus completely identical, giving the claim.
\end{proof}

\begin{remark}  From the identity $\frac{y^n-x^n}{y-x} = \sum_{j=0}^{n-1} x^j y^{n-1-j}$ for $n \geq 0$, we see that
\begin{align*}
\int_\R Hf(y) y^n\ f(y) dy &= \frac{1}{2\pi} \int_\R\int_\R f(x) f(y) \frac{y^n-x^n}{y-x}\ dx dy \\
&= \frac{1}{2\pi} \sum_{j=0}^{n-1}\left(\int_\R f(x) x^j\ dx\right) \left(\int_\R f(y) y^{n-j-1}\ dy\right).
\end{align*}
Thus if $X$ is a random variable with law $d\mu = f(x)\ dx$ for a compactly supported and sufficiently regular $f$, then on comparing the above identity with \eqref{colon} we see that the free score $J(X)$ is given by the formula
$$ J(X) = 2\pi Hf(X)$$
and thus from \eqref{phix}
$$ \Phi(X) = 4\pi^2 \int_\R f(y) Hf(y)^2\ dy.$$
Lemma \ref{hilb} shows that this formula is compatible with \eqref{23-phi}.\end{remark}

We abbreviate $f=f_1$ and $G = G_1$, and introduce the biholomorphic kernel $K(z,w)$ for $z,w \in \C \backslash \R$ by the formula
\begin{equation}\label{K-def}
 K(z,w) \coloneqq \frac{1}{G(z) G(w)} \left( \frac{G(z)-G(w)}{z-w} + G(z) G(w) \right)^2,
\end{equation}
noting that there is a removable singularity on the diagonal $z=w$.  This kernel $K$ emerged after lengthy but rather opaque calculations involving the quantities appearing in the previous section; it would be desirable to have a conceptual interpretation of this expression.  

We can now derive Theorem \ref{main} from the following three facts.

\begin{proposition}\label{ident} Formally at least, we have the following claims:
\begin{itemize}
\item[(i)] We have
\begin{equation}\label{max} \partial_k \Phi( k^{-1/2}_* \mu^{\boxplus k} )|_{k=1} = \frac{8\pi^2}{3} \int_\R f^3\ dx + 4\pi \int_\R \frac{Hf \partial_x f - f \partial_x Hf}{(Hf)^2 + f^2} f^2\ dx.
\end{equation}
\item[(ii)]  We have
\begin{equation}\label{sam}
\begin{split}
& \lim_{\eps \to 0^+} \sum_{\alpha,\beta \in \{-1,+1\}} \int_\R\int_\R f(x) f(y) K( x_{\alpha\eps}, y_{\beta \eps} )\ dx dy \\
&\quad = -\frac{8\pi^2}{3} \int_\R f^3\ dx - 4\pi \int_\R \frac{Hf \partial_x f - f \partial_x Hf}{(Hf)^2 + f^2} f^2\ dx,
\end{split}
\end{equation}
where $x_{\alpha \eps} \coloneqq x+i\alpha \eps$ and $y_{\beta \eps} \coloneqq y + i \beta \eps$.
\item[(iii)]  The kernel $K(z,\overline{w})$ is positive semi-definite, thus
$$ \sum_{j=1}^n \sum_{k=1}^n c_j \overline{c_k} K(z_j,\overline{z_k}) \geq 0$$
for all complex numbers $z_1,\dots,z_n \in \C \backslash \R$ and $c_1,\dots,c_n \in \C$.
\end{itemize}
\end{proposition}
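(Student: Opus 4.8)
The three assertions in Proposition \ref{ident} are logically independent, so I would establish each on its own; claim (iii) is the structural core and admits a short proof, whereas (i) and (ii) are (delicate but essentially mechanical) computations with the renormalized Burgers flow \eqref{diffeq-2} and the Plemelj formulae \eqref{plemelj}, to be cross‑checked against one another.

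For (iii) the plan is to first \emph{simplify} $K$. Using \eqref{rg}, set $v(z) \coloneqq R_\mu(G(z)) = z - 1/G(z)$, so $G(z) = (z - v(z))^{-1}$. Feeding the two identities $\frac{1}{G(z)} - \frac{1}{G(w)} = (z-w) - (v(z) - v(w))$ and $\frac{1}{G(z)} - \frac{1}{G(w)} = \frac{G(w) - G(z)}{G(z)G(w)}$ into \eqref{K-def} and collecting terms collapses the opaque kernel to
\[
 K(z,w) = G(z)G(w)\left(\frac{v(z) - v(w)}{z-w}\right)^2 .
\]
Next I would show that $-v$ is a bounded Herglotz--Nevanlinna (Pick) function on $\C^+$: since $\mu$ is a probability measure, Cauchy--Schwarz gives $|G(z)|^2 \le \int_\R \frac{d\mu(x)}{|z-x|^2}$, whence $\mathrm{Im}\,\frac{1}{G(z)} = \frac{(\mathrm{Im}\,z)\int_\R d\mu(x)/|z-x|^2}{|G(z)|^2} \ge \mathrm{Im}\,z$, so $\mathrm{Im}\,v(z) \le 0$ on $\C^+$; moreover $v(z) \to \kappa_1(\mu)$ at infinity. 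The classical Nevanlinna representation (equivalently, the standard description of the reciprocal Cauchy transform $1/G$) then yields a finite positive measure $\rho$ on $\R$ with $v(z) = \kappa_1(\mu) + \int_\R \frac{d\rho(t)}{z-t}$. Substituting, the divided difference telescopes, $\frac{v(z)-v(w)}{z-w} = -\int_\R \frac{d\rho(t)}{(z-t)(w-t)}$, and therefore
\[
 K(z,\overline w) = G(z)\,\overline{G(w)}\,\left(\int_\R \frac{d\rho(t)}{(z-t)\,\overline{(w-t)}}\right)^{2}.
\]
The right-hand side is manifestly a product of positive semidefinite kernels: $G(z)\overline{G(w)}$ is rank one; $(z,w)\mapsto\int_\R (z-t)^{-1}\,\overline{(w-t)^{-1}}\,d\rho(t)$ is the Gram kernel of the family $t\mapsto(z-t)^{-1}$ in $L^2(\rho)$, hence PSD; and by the Schur product theorem a square, and a product, of PSD kernels is PSD. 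Note that no restriction to a single half-plane ever enters, which is exactly what (iii) demands for all $z_j \in \C \setminus \R$.

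For (i) I would start from the identity $\Phi(k^{-1/2}_*\mu^{\boxplus k}) = 4\pi^2 \int_\R f_k (Hf_k)^2\,dx$ (the formula for $\Phi$ in terms of the density recorded in the remark after Lemma \ref{hilb}), differentiate in $k$, and simplify: using skew-adjointness of $H$ together with $H(fHf) = \tfrac12((Hf)^2 - f^2)$ from Lemma \ref{hilb}, the cross term cancels and $\partial_k\Phi|_{k=1}$ collapses all the way to $4\pi^2 \int_\R \dot f\, f^2\,dx$, where $\dot f \coloneqq \partial_k f_k|_{k=1}$. To identify $\dot f$, evaluate \eqref{diffeq-2} at $k=1$, obtaining $\partial_k G_k|_{k=1} = \frac{\partial_z G}{G} + \tfrac12 G - \tfrac12 z\,\partial_z G$ with $G = G_1 = G_\mu$; taking the $x+i0^+$ boundary value via \eqref{plemelj} and reading off imaginary parts expresses $\dot f$ explicitly through $f$, $Hf$, and their $x$-derivatives, and substituting plus one integration by parts ($\int_\R x f^2 f'\,dx = -\tfrac13\int_\R f^3\,dx$) produces exactly \eqref{max}. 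For (ii) I would insert the Plemelj expansion $G(x + i\alpha\eps) = \pi Hf(x) - \alpha\pi i f(x) + o(1)$ into \eqref{K-def} (or, more conveniently, into the collapsed form above), expand the four-term sum over $\alpha,\beta\in\{\pm1\}$, and pass to the limit. The crux is the near-diagonal analysis: when $\alpha = \beta$ the difference quotient $\frac{G(x_{\alpha\eps}) - G(y_{\beta\eps})}{x_{\alpha\eps} - y_{\beta\eps}}$ converges to a Cauchy principal value, whereas when $\alpha \neq \beta$ the pole at $x = y$ survives in the limit and produces a $\delta(x-y)$ contribution; the sum over signs is arranged so that the principal-value pieces reassemble into the bulk term $-\tfrac{8\pi^2}{3}\int_\R f^3$ and the residue pieces into the local term $-4\pi\int_\R \frac{Hf\,\partial_x f - f\,\partial_x Hf}{(Hf)^2 + f^2} f^2$ (the denominator $(Hf)^2 + f^2 = \pi^{-2}|G(x+i0^+)|^2$ being precisely what the prefactor $1/(G(z)G(w))$ contributes).

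The conceptual weight lies entirely in (iii), and once the $R$-transform behind $K$ is uncovered that argument is short. The main obstacle is (ii): faithfully tracking the $O(\eps)$ and residue contributions of the singular kernels $\frac{1}{z-w}$ and $\frac{1}{(z-w)^2}$ near the diagonal through the four-term sum, and checking that the survivors reproduce \eqref{max} down to the constants $\tfrac{8\pi^2}{3}$ and $4\pi$ --- this is exactly the kind of limiting computation the section's formal-level disclaimers are meant to license. I would devote most of the care to this step and independently verify the final tally on the semicircular example, where $G$, $v$, and hence $K$ are all rational and everything can be evaluated in closed form.
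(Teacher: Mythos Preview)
Your proof of (iii) is correct and takes a genuinely different route from the paper's. Both arguments use the Schur product theorem, but where the paper shows the inner bracket $\frac{G(z)-G(\overline w)}{z-\overline w}+G(z)G(\overline w)$ is negative semi-definite by recognising it directly as $-\mathrm{Cov}\bigl((z-X)^{-1},(\overline w-X)^{-1}\bigr)$ in $L^2(\mu)$, you first collapse $K$ to $G(z)G(w)\bigl(\tfrac{v(z)-v(w)}{z-w}\bigr)^2$ with $v=R_\mu\circ G$, and then invoke the Nevanlinna representation of $-v$ to write the divided difference as a Gram kernel in $L^2(\rho)$. Your reformulation makes the role of the $R$-transform visible and gives an alternative Gram decomposition; the paper's is shorter and avoids the auxiliary measure $\rho$.

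For (i) your argument is essentially the paper's: both reduce $\partial_k\Phi|_{k=1}$ to $4\pi^2\int \dot f\,f^2\,dx$ (you via the $(Hf)^2$ form of $\Phi$ and the identity $H(fHf)=\tfrac12((Hf)^2-f^2)$, the paper via the $f^3$ form directly), then read off $\dot f$ from the imaginary part of the boundary value of \eqref{diffeq-2} and integrate by parts.

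For (ii) your plan differs from the paper's. The paper expands the square in \eqref{K-def} to split the sum as $A_2+2A_1+A_0$ and handles the singular factors $\tfrac{1}{x-y}$, $\tfrac{1}{(x-y)^2}$ by writing $\tfrac{1}{(x_{\alpha\eps}-y_{\beta\eps})^m}=(-1)^{m-1}\partial_x^m\Log(x_{\alpha\eps}-y_{\beta\eps})$ and integrating by parts \emph{before} sending $\eps\to0$; in that decomposition $A_0=0$, $2A_1$ supplies the $-\tfrac{8\pi^2}{3}\int f^3$ term, and $A_2$ supplies the local term. You instead propose to decompose by the sign pattern $(\alpha,\beta)$ and handle the diagonal singularity (which does arise when $\alpha\ne\beta$, since then $x_{\alpha\eps}-y_{\beta\eps}\to0$ at $x=y$ while $G(x_{\alpha\eps})-G(y_{\beta\eps})\to-2\pi i\alpha f(x)\ne0$) via Sokhotski-type residues. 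This is viable, but your attribution of the two output terms to ``principal-value'' versus ``residue'' pieces is not obviously the right bookkeeping and would need to be verified; in any case this step is where the real work lies, as you acknowledge, and the paper's $\Log$-integration-by-parts device is a concrete way to organise it that you may find cleaner than tracking four boundary limits simultaneously.
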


Indeed, from (iii) we have
$$ \sum_{\alpha,\beta \in \{-1,+1\}} \int_\R\int_\R f(x) f(y) K( x_{\alpha \eps}, y_{\beta \eps} )\ dx dy
= \sum_{\alpha,\beta \in \{-1,+1\}} \int_\R\int_\R f(x) f(y) K( x+i \alpha \eps, \overline{y_{\beta \eps}} )\ dx dy$$
is non-negative for any $\eps > 0$.  Meanwhile, from (i), (ii) we have
$$ \lim_{\eps \to 0^+} \sum_{\alpha,\beta \in \{-1,+1\}} \int_\R\int_\R f(x) f(y) K( x_{\alpha \eps}, y_{\beta \eps} )\ dx dy 
= - \partial_k \Phi( k^{-1/2}_* \mu^{\boxplus k} )|_{k=1}$$
and hence
$$ \partial_k \Phi( k^{-1/2}_* \mu^{\boxplus k} )|_{k=1} \leq 0.$$
From \eqref{mono}, \eqref{ch-2} we then have
$$ \partial_k \Phi( k^{-1/2}_* \mu^{\boxplus k} ) \leq 0$$
for all $k \geq 1$, giving the non-increasing nature of $\Phi( k^{-1/2}_* \mu^{\boxplus k} )$, then the non-decreasing nature of $\chi( k^{-1/2}_* \mu^{\boxplus k} )$ follows from \eqref{tik} as in the previous section.

It remains to establish the three claims in Proposition \ref{ident}.  We begin with (i).  From \eqref{23-phi} and the chain rule we have
$$
\partial_k \Phi( k^{-1/2}_* \mu^{\boxplus k} )_{k=1} = 
4\pi^2 \int_\R f_1(x)^2 \partial_k f_k(x)|_{k=1}\ dx.$$
On the other hand, applying \eqref{diffeq-2} at $z=x+i0^+$ and using \eqref{plemelj} and the Cauchy-Riemann equations we have
$$ (k \partial_k + \frac{1}{2} x \partial_x) (\pi Hf_k(x) - i \pi f_k(x)) = \frac{ \partial_x (\pi Hf_k(x) - i \pi f_k(x))}{\pi Hf_k(x) - i \pi f_k(x)}  + \frac{1}{2} (\pi Hf_k(x) - i \pi f_k(x))$$
which on taking imaginary parts gives an integral differential equation for $f_k$:
\begin{equation}\label{feq}
 (k \partial_k + \frac{1}{2} x \partial_x) f_k = \frac{1}{\pi} \frac{Hf_k \partial_x f_k - f_k \partial_x Hf_k}{(Hf_k)^2 + f_k^2} + \frac{1}{2} f_k.
\end{equation}
Multiplying by $f_k^2$ and integrating, we obtain the claim (i) after a routine integration by parts.

We now skip ahead to (iii).  The Schur product theorem asserts that the pointwise product of positive semi-definite kernels is again positive semi-definite.  Since the rank one kernel $\frac{1}{G(z) G(\overline{w})}$ is clearly positive semi-definite, it thus suffices from \eqref{K-def} to show that the kernel
\begin{equation}\label{quant}
 \frac{G(z)-G(\overline{w})}{z-\overline{w}} + G(z) G(\overline{w}) 
\end{equation}
is negative semi-definite.  But from \eqref{G-form} and the identities
$$ \int_\R f(x)\ dx = 1$$
and
$$ -\frac{1}{(z-x)(\overline{w}-x)} = \frac{\frac{1}{z-x} - \frac{1}{\overline{w}-x}}{z-\overline{w}}$$
we see after a brief calculation that\footnote{In other words, the quantity \eqref{quant} is the negative of the covariance of $(z-X)^{-1}$ and $(\overline{w}-X)^{-1}$, where $X$ is a random variable with law $\mu$.}
$$ - \int_\R f(x) \left( \frac{1}{z-x} - G(z)\right) \left( \frac{1}{\overline{w}-x} - G(\overline{w})\right)\ dx = 
\frac{G(z)-G(\overline{w})}{z-\overline{w}} + G(z) G(\overline{w}).$$
Since $f(x)$ is non-negative and the rank one kernels $\left( \frac{1}{x-z} - G(z)\right) \left( \frac{1}{x-\overline{w}} - G(\overline{w})\right)$ are positive semi-definite, the claim (iii) follows.

It remains to establish the identity (ii), which is the lengthiest calculation.  We expand the left-hand side of \eqref{sam} as $A_2 + 2A_1 + A_0$, where
\begin{align*}
A_2 &\coloneqq  \lim_{\eps \to 0^+} \sum_{\alpha,\beta \in \{-1,+1\}} \int_\R\int_\R f(x) f(y) \frac{(G(x_{\alpha\eps})-G(y_{\beta \eps}))^2}{G(x_{\alpha \eps}) G(y_{\beta \eps}) (x_{\alpha \eps} - y_{\beta \eps})^2}\ dx dy \\
A_1 &\coloneqq  \lim_{\eps \to 0^+} \sum_{\alpha,\beta \in \{-1,+1\}} \int_\R\int_\R f(x) f(y) \frac{G(x_{\alpha\eps})-G(y_{\beta \eps})}{x_{\alpha \eps} - y_{\beta \eps}}\ dx dy \\
A_0 &\coloneqq  \lim_{\eps \to 0^+} \sum_{\alpha,\beta \in \{-1,+1\}} \int_\R\int_\R f(x) f(y) G(x_{\alpha \eps}) G(y_{\beta \eps})\ dx dy.
\end{align*}
The quantity $A_0$ is easiest to compute, as it factorizes as
$$ \left| \sum_\pm \int_\R f(x) G(x+i0^\pm)\ dx \right|^2.$$
Applying \eqref{plemelj} and Lemma \ref{hilb} we conclude that $A_0=0$.

Now we turn to $A_1$.  In order to compute the limit $\eps \to 0^+$ it will be convenient to use integration by parts to replace the divergent-looking factor $\frac{1}{x_{\alpha \eps} - y_{\beta \eps}}$ with a tamer singularity.  More precisely, we write
$$ \frac{1}{x_{\alpha \eps} - y_{\beta \eps}} = \partial_x \Log (x_{\alpha \eps} - y_{\beta \eps})$$
where we define the $\Log z$ away from the branch cut $(-\infty,0)$ to be the branch of the complex logarithm with imaginary part in $(-\pi,\pi)$, and on the branch cut $(-\infty,0)$ we define the averaged limiting value
$$ \Log(-x) \coloneqq \frac{1}{2} (\Log(-x+i0^+) + \Log(-x+i0^-)) = \log|x|.$$
The above identity breaks down when $x_{\alpha\eps}-y_{\beta \eps}$ vanishes, but this will not cause difficulty due to the vanishing of the numerator $G(x_{\alpha\eps})-G(y_{\beta \eps})$ in this case.  Integrating by parts, we conclude that
$$
A_1 = -\lim_{\eps \to 0^+} \sum_{\alpha,\beta \in \{-1,+1\}} \int_\R\int_\R \partial_x \left( f(x) f(y) (G(x_{\alpha\eps})-G(y_{\beta \eps})\right)
\Log (x_{\alpha \eps} - y_{\beta \eps})\ dx dy.$$
As $\eps \to 0^+$, the quantity $\Log (x_{\alpha \eps} - y_{\beta \eps})$ converges\footnote{Here the indicator function $1_{y>x}$ is defined to equal $1$ when $y>x$ and $0$ otherwise.}  to $\log |x-y| + i \pi 1_{y>x} \frac{\alpha-\beta}{2}$ for $x \neq y$, while from \eqref{plemelj} $G(x_{\alpha\eps})-G(y_{\beta \eps})$ converges to $\pi ( Hf(x)-Hf(y) - i \alpha f(x) + i \beta f(y) )$.  For $f$ sufficiently regular, we conclude that
$$ \sum_{\alpha,\beta \in \{-1,+1\}} \partial_x \left( f(x) f(y) (G(x_{\alpha\eps})-G(y_{\beta \eps}))\right)
\Log (x_{\alpha \eps} - y_{\beta \eps})$$
converges to
\begin{align*}
& 4\pi \partial_x \left( f(x) f(y) (Hf(x)-Hf(y)) \right) \log |x-y| \\
&\quad + 2 \pi^2 \partial_x \left(f(x)^2 f(y)\right) 1_{y>x} + 2\pi^2 \partial_x \left(f(x) f(y)^2\right) 1_{y>x}
\end{align*}
and hence
\begin{align*}
 A_1 =& -4\pi \int_\R \int_\R \partial_x ( f(x) f(y) (Hf(x)-Hf(y)) ) \log |x-y|\ dx dy\\
&\quad  - 2 \pi^2 \int_\R \int_\R \partial_x (f(x)^2 f(y) + f(x) f(y)^2) 1_{y>x}\ dx dy.
\end{align*}
Integrating by parts, we conclude that
$$ A_1 = 4\pi \int_\R \int_\R \frac{f(x) f(y) (Hf(x)-Hf(y))}{x-y} \ dx dy - 4 \pi^2 \int_\R f^3\ dx.$$
By symmetry and \eqref{H-def} we have
$$ \int_\R \int_\R \frac{f(x) f(y) (Hf(x)-Hf(y))}{x-y} \ dx dy  = \pi \int_\R f(x) Hf(x) Hf(x)\ dx + \pi \int_\R Hf(y) f(y) Hf(y)\ dy $$
and hence by Lemma \ref{hilb} and a brief calculation
$$ A_1 = -\frac{4\pi^2}{3} \int_\R f^3\ dx.$$
We can compute $A_2$ in a similar fashion, writing
$$ \frac{1}{(x_{\alpha \eps} - y_{\beta \eps})^2} = -\partial_x^2 \Log (x_{\alpha \eps} - y_{\beta \eps})$$
and integrating by parts twice to obtain
$$ A_2 = - \lim_{\eps \to 0^+} \sum_{\alpha,\beta \in \{-1,+1\}} \int_\R\int_\R \partial_x^2 \left( f(x) f(y) \frac{(G(x_{\alpha\eps})-G(y_{\beta \eps}))^2}{G(x_{\alpha \eps}) G(y_{\beta \eps})}\right) \Log (x_{\alpha \eps} - y_{\beta \eps})\ dx dy.$$
We expand
\begin{align*}
 \frac{(G(x_{\alpha\eps})-G(y_{\beta \eps}))^2}{G(x_{\alpha \eps}) G(y_{\beta \eps})}
&= \frac{G(x_{\alpha \eps})}{G(y_{\beta \eps})} - 2 + \frac{G(y_{\beta \eps})}{G(x_{\alpha \eps})}  \\
&= \frac{G(x_{\alpha \eps}) \overline{G(y_{\beta \eps})}}{|G(y_{\beta \eps})|^2} - 2 +
\frac{G(y_{\beta \eps}) \overline{G(x_{\alpha \eps})}}{|G(x_{\alpha \eps})|^2} 
\end{align*}
and hence by \eqref{plemelj} this quantity converges to
$$
\frac{(Hf(x) - i \alpha f(x)) (Hf(y)+i\beta f(y))}{Hf(y)^2 + f(y)^2} - 2 + \frac{(Hf(y) - i \beta f(y)) (Hf(x)+i\alpha f(x))}{Hf(x)^2 + f(x)^2} $$
as $\eps \to 0^+$.  We can then evaluate $A_2$ as before (for $f$ sufficiently regular) as
\begin{align*}
 A_2 =& -4 \int_\R \int_\R \partial_x^2 \left( f(x) f(y) \left(\frac{Hf(x)Hf(y)}{Hf(y)^2 + f(y)^2} -2 + 
\frac{Hf(x)Hf(y)}{Hf(x)^2 + f(x)^2}\right) \right) \log|x-y|\ dx dy \\
&\quad - 2\pi \int_\R \int_\R \partial_x^2 \left( f(x) f(y) \left(\frac{f(x)Hf(y)}{Hf(y)^2 + f(y)^2} - 
\frac{f(x) Hf(y)}{Hf(x)^2 + f(x)^2}\right) \right) 1_{y>x}\ dx dy \\ 
&\quad - 2\pi \int_\R \int_\R \partial_x^2 \left( f(x) f(y) \left(\frac{f(y)Hf(x)}{Hf(y)^2 + f(y)^2} - 
\frac{f(y) Hf(x)}{Hf(x)^2 + f(x)^2}\right) \right) 1_{y>x}\ dx dy.
\end{align*}
From the fundamental theorem of calculus we have
$$
\int_\R \int_\R \partial_x^2 \left( f(x) f(y) \frac{f(x)Hf(y)}{Hf(y)^2 + f(y)^2} \right)1_{y>x}\ dx dy 
= \int_\R \partial_y (f(y)^2) \frac{f(y) Hf(y)}{Hf(y)^2 + f(y)^2}\ dy.$$
Using the distributional identity $\partial_x^2 1_{y>x} = \partial_y^2 1_{y>x}$ and integrating by parts repeatedly, we also have
\begin{align*}
&\int_\R \int_\R \partial_x^2 \left( f(x) f(y) \frac{f(x)Hf(y)}{Hf(x)^2 + f(x)^2} \right)1_{y>x}\ dx dy \\
&\quad = 
\int_\R \int_\R \partial_y^2 \left( f(x) f(y) \frac{f(x)Hf(y)}{Hf(x)^2 + f(x)^2} \right)1_{y>x}\ dx dy \\
&\quad = -\int_\R \partial_x (f(x) Hf(x)) \frac{f(x)^2}{Hf(x)^2 + f(x)^2}\ dx.
\end{align*}
Similar computations give
$$ \int_\R \int_\R \partial_x^2 \left( f(x) f(y) \frac{f(y)Hf(x)}{Hf(y)^2 + f(y)^2} \right) 1_{y>x}\ dx dy
= \int_\R \partial_y (f(y) Hf(y)) \frac{f(y)^2}{Hf(y)^2 + f(y)^2}\ dy $$
and
$$ \int_\R \int_\R \partial_x^2 \left( f(x) f(y) \frac{f(y) Hf(x)}{Hf(x)^2 + f(x)^2} \right) 1_{y>x}\ dx dy
= - \int_\R \partial_x (f(x)^2) \frac{f(x) Hf(x)}{Hf(x)^2 + f(x)^2}\ dx.$$
Next, we integrate by parts, then use Lemma \ref{hilb} and the fact that $H$ commutes with derivatives to compute
\begin{align*}
& \int_\R \int_\R \partial_x^2\left( f(x) f(y) \frac{Hf(x)Hf(y)}{Hf(y)^2 + f(y)^2} \right) \log|x-y|\ dx dy \\
&\quad = - \int_\R \left(\mathrm{p.v.} \int_\R \partial_x( f(x) f(y) )\frac{Hf(x)Hf(y)}{Hf(y)^2 + f(y)^2}\ \frac{dx}{x-y}\right) dy \\
&\quad = \pi \int_\R H \partial_y(f Hf)(y) \frac{f(y) Hf(y)}{Hf(y)^2 + f(y)^2} dy \\
&\quad = \frac{\pi}{2} \int_\R \partial_y((Hf)^2-f^2)(y) \frac{f(y) Hf(y)}{Hf(y)^2 + f(y)^2} dy.
\end{align*}
From $\partial_x^2 \log |x-y| = \partial_y^2 \log |x-y|$, integration by parts, and symmetry we then have
\begin{align*}
& \int_\R \int_\R \partial_x^2\left( f(x) f(y) \frac{Hf(x)Hf(y)}{Hf(x)^2 + f(x)^2} \right) \log|x-y|\ dx dy \\
&\quad=  \int_\R \int_\R \partial_y^2\left( f(x) f(y) \frac{Hf(x)Hf(y)}{Hf(x)^2 + f(x)^2} \right) \log|x-y|\ dx dy \\
&\quad = \frac{\pi}{2} \int_\R \partial_x((Hf)^2-f^2)(x) \frac{f(x) Hf(x)}{Hf(x)^2 + f(x)^2} dx.
\end{align*}
Finally,
\begin{align*}
 \int_\R \int_\R \partial_x^2( f(x) f(y)  ) \log|x-y|\ dx dy 
&= -  \int_\R \left(\mathrm{p.v.} \int_\R \partial_x( f(x) f(y)  )\ \frac{dx}{x-y}\right) dy \\
&= \pi \int_\R H \partial_y f(y) f(y)\ dy.
\end{align*}
Putting all this together, we conclude that
\begin{align*}
A_2 &= 2\pi\int_\R - \partial_x( (Hf)^2-f^2 ) \frac{f Hf}{(Hf)^2+f^2} + 4 (H \partial_x f) f - \partial_x( (Hf)^2-f^2 ) \frac{f Hf}{(Hf)^2+f^2}\ dx \\
&\quad - 2\pi \int_\R \partial_x (f^2) \frac{f Hf}{(Hf)^2 + f^2} + \partial_x (f Hf) \frac{f^2}{(Hf)^2 + f^2}\ dx \\
&\quad - 2\pi \int_\R \partial_x (f^2) \frac{f Hf}{(Hf)^2 + f^2} + \partial_x (f Hf) \frac{f^2}{(Hf)^2 + f^2}\ dx 
\end{align*}
which on applying the Leibniz rule, the commutativity of $H$ and $\partial_x$, and collecting terms, simplifies to
$$
A_2 = -4\pi \int_\R \frac{H f \partial_x f - f \partial_x Hf}{(Hf)^2 + f^2}\ f^2\ dx$$
and the claim (ii) follows.

\section{Variational formulation}\label{variation-sec}

We now prove Theorem \ref{var-thm}.  Our calculations here will be completely formal. Similar calculations appear in the recent paper \cite[\S 5]{kenyon} in the context of studying random Young tableaux from a variational perspective; we thank Istvan Prause for this reference.

We assume that the measures $\mu^{\boxplus k}$ are absolutely continuous with 
$$ d\mu^{\boxplus k} = f_k(x)\ dx$$
for $k \geq 1$.  Applying \eqref{plemelj} at $x+i0^+$ together with \eqref{diffeq}, we conclude that
$$
(k \partial_k + x \partial_x) (\pi H f_k - \pi i f_k) = \frac{\partial_x (Hf_k - if_k)}{H f_k - if_k} = \partial_x \log(Hf_k - if_k)$$
and hence on taking real and imaginary parts we have
$$
(k \partial_k + x \partial_x) Hf_k = \frac{1}{\pi} \partial_x \log ((Hf_k)^2 + f_k^2)^{1/2} $$
and
$$
(k \partial_k + x \partial_x) f_k = \frac{1}{\pi} \partial_x \arctan \frac{f_k}{Hf_k}$$
(where we use the branch of $\arctan$ taking values in $[0,\pi]$) and thus by the change of variables $k=1/s$ and abbreviating $f \coloneqq f_{1/s}$,
\begin{equation}\label{sy}
(-s \partial_s + x \partial_x) Hf = \frac{1}{\pi} \partial_x \log((Hf)^2+f^2)^{1/2}
\end{equation}
and
\begin{equation}\label{sx}
(-s \partial_s + x \partial_x) f = \frac{1}{\pi} \partial_x \arctan \frac{f}{Hf}
\end{equation}
for $0 < s < 1$.  We remark that this latter equation was also formally derived in \cite{steinerberger} in the context of the derivative process, which is an averaged version of the minor process as established in \cite[Lemma 1.16]{mss}.

Meanwhile, for $(s,y) \in \Delta$, we have from \eqref{mus} that
\begin{equation}\label{help}
\int_{-\infty}^{\lambda/s} f(x)\ dx = \frac{y}{s},
\end{equation}
where we abbreviate $\lambda = \lambda(s,y)$ and $f = f_{1/s}$.  If we differentiate this in $y$ using the fundamental theorem of calculus, we see that
$$ \frac{\partial_y \lambda}{s}  f(\lambda/s) = \frac{1}{s}$$
thus
\begin{equation}\label{fls}
 f(\lambda/s) = \frac{1}{\partial_y \lambda}.
\end{equation}
If instead we differentiate in $s$, we conclude that
$$ \left(\frac{\partial_s \lambda}{s} - \frac{\lambda}{s^2}\right) f(\lambda/s) + \int_{-\infty}^{\lambda/s} \partial_s f(x)\ dx = - \frac{y}{s^2}.$$
and thus by \eqref{fls} and multiplying by $s$
$$ \frac{\partial_s \lambda}{\partial_y \lambda} - \frac{\lambda}{s \partial_y \lambda} + \int_{-\infty}^{\lambda/s} s\partial_s f(x)\ dx = - \frac{y}{s}.$$
By \eqref{sx} we have
$$
 s \partial_s f = x \partial_x f - \frac{1}{\pi} \partial_x \arctan \frac{f}{Hf} 
$$
and hence by integration by parts and \eqref{help}, \eqref{fls}
\begin{align*}
\int_{-\infty}^{\lambda/s} s\partial_s f(x)\ dx &= \frac{\lambda}{s} f(\lambda/s) - \frac{1}{\pi} \arctan \frac{f(\lambda/s)}{Hf(\lambda/s)}
- \int_{-\infty}^{\lambda/s} f(x)\ dx \\
&= \frac{\lambda}{s \partial_y \lambda} - \frac{1}{\pi} \arctan \frac{f(\lambda/s)}{Hf(\lambda/s)} - \frac{y}{s}
\end{align*}
and thus
$$ \frac{\partial_s \lambda}{\partial_y \lambda} = \frac{1}{\pi} \arctan \frac{f(\lambda/s)}{Hf(\lambda/s)}.$$
We remark that this gives the pointwise inequalities $0 \leq \partial_s \lambda \leq \partial_y \lambda$, which in the random matrix formulation corresponds to the Cauchy interlacing inequalities.  We rewrite this equation using \eqref{fls} as
\begin{equation}\label{hfs}
 Hf(\lambda/s) = \frac{\cot(\pi \frac{\partial_s \lambda}{\partial_y \lambda})}{\partial_y \lambda} 
\end{equation}
and hence
$$ \log ((Hf)^2+f^2)^{1/2}(\lambda/s) = \log \frac{\cosec(\pi \frac{\partial_s \lambda}{\partial_y \lambda})}{\partial_y \lambda}.$$
Differentiating in $y$ using the chain rule, we conclude
$$ \frac{\partial_y \lambda}{s} \left(\partial_x \log ((Hf)^2+f^2)^{1/2}\right)(\lambda/s)
= \partial_y \log \frac{\cosec(\pi \frac{\partial_s \lambda}{\partial_y \lambda})}{\partial_y \lambda}$$
and similarly by differentiating \eqref{hfs} in $y$, $s$ we have
$$ \frac{\partial_y \lambda}{s} (\partial_x Hf)(\lambda/s) = \partial_y \frac{\cot(\pi \frac{\partial_s \lambda}{\partial_y \lambda})}{\partial_y \lambda}$$
and
$$ (\partial_s Hf)(\lambda/s) + \left(\frac{\partial_s \lambda}{s}-\frac{\lambda}{s^2}\right) (\partial_x Hf)(\lambda/s)
= \partial_s \frac{\cot(\pi \frac{\partial_s \lambda}{\partial_y \lambda})}{\partial_y \lambda}
$$
so that
$$ (\partial_s Hf)(\lambda/s) 
= \left(\partial_s - \frac{\partial_s \lambda}{\partial_y \lambda} \partial_y + \frac{\lambda}{s\partial_y \lambda} \partial_y\right) \frac{\cot(\pi \frac{\partial_s \lambda}{\partial_y \lambda})}{\partial_y \lambda}.
$$
Inserting these identities into \eqref{sy} evaluated at $\lambda/s$, we obtain a differential equation for $\lambda$ in the variables $s,y$:
$$ 
-s\left(\partial_s - \frac{\partial_s \lambda}{\partial_y \lambda} \partial_y\right) \frac{\cot(\pi \frac{\partial_s \lambda}{\partial_y \lambda})}{\partial_y \lambda}
= \frac{1}{\pi} \frac{s}{\partial_y \lambda} \partial_y \log \frac{\cosec(\pi \frac{\partial_s \lambda}{\partial_y \lambda})}{\partial_y \lambda}.$$
Multiplying by $-\pi \partial_y \lambda/s$, we obtain
$$ 
(\partial_y \lambda \partial_s - \partial_s \lambda \partial_y) \frac{\pi \cot(\pi \frac{\partial_s \lambda}{\partial_y \lambda})}{\partial_y \lambda}
= \partial_y \left( \log \partial_y \lambda + \log \sin(\pi \frac{\partial_s \lambda}{\partial_y \lambda}) \right)$$
which we write in divergence form using \eqref{L-def} as
$$ \partial_s\left( \partial_y \lambda \frac{\pi \cot(\pi \frac{\partial_s \lambda}{\partial_y \lambda})}{\partial_y \lambda} \right)
+ \partial_y\left( - \partial_s \lambda \frac{\pi \cot(\pi \frac{\partial_s \lambda}{\partial_y \lambda})}{\partial_y \lambda}
- L(\partial_s \lambda, \partial_y \lambda)\right) = 0.$$
Since the partial derivatives of
$$ L(\lambda_s, \lambda_y) \coloneqq \log \lambda_y + \log \sin\left(\pi \frac{\lambda_s}{\lambda_y}\right)$$
are given by
$$ L_{\lambda_s} = \frac{\pi}{\lambda_y} \cot\left(\pi \frac{\lambda_s}{\lambda_y}\right)$$
and
$$ L_{\lambda_y} = \frac{1}{\lambda_y} - \frac{\pi \lambda_s}{\lambda_y^2} \cot\left(\pi \frac{\lambda_s}{\lambda_y}\right),$$
we can rewrite the above equation as
$$ \partial_s (\partial_y \lambda L_{\lambda_s}(\partial_s \lambda, \partial_y \lambda)) + \partial_y (\partial_y \lambda L_{\lambda_y}(\partial_s \lambda, \partial_y \lambda) - L(\partial_s \lambda, \partial_y \lambda)) = 0.$$
From the chain rule we have
$$ \partial_y L(\partial_s \lambda, \partial_y \lambda) = (\partial_y \partial_y \lambda) L_{\lambda_y}( \partial_s \lambda, \partial_y \lambda)
+ (\partial_s \partial_y \lambda) L_{\lambda_s}( \partial_s \lambda, \partial_y \lambda);$$
inserting this into the previous equation and using the product rule and then cancelling the $\partial_y \lambda$ factor, we conclude that
$$ \partial_s L_{\lambda_s}(\partial_s \lambda, \partial_y \lambda) + \partial_y L_{\lambda_y}(\partial_s \lambda, \partial_y \lambda) = 0$$
which is the Euler-Lagrange equation for the Lagrangian \eqref{lag}, and the claim follows.

\appendix

\section{Fractional free convolution powers from the minor process}\label{power}

In this appendix we prove Proposition \ref{minor-free}.  Let the hypotheses be as in that proposition; our task is to establish \eqref{pipx}.  We follow the arguments from \cite[\S 2.5.4]{tao-book}.  Using the GNS construction we may assume that ${\mathcal A}$ is a von Neumann algebra of bounded operators.

We begin with some algebraic identities.  For any noncommutative variable $E$ of operator norm less than $1$, define the transform
$$ \Psi(E) \coloneqq (1-E)^{-1} - 1 = E + E^2 + E^3 + \dots.$$

\begin{lemma}[Algebraic identities]\label{alg}\  
\begin{itemize}
\item[(i)]  If $Z \in \A$ is sufficiently small (in operator norm), then
$$ (1 + \pi(Z))^{-1} = \pi( (1 + pZ)^{-1} ).$$
\item[(ii)]  If $Y \in \A$, and $E \in \A$ is sufficiently small (in operator norm) depending on $Y$, then
$$ \pi\left(Y \Psi(E)\right) = \Psi(E_{pY}),$$
where 
\begin{equation}\label{ep}
 E_{pY} \coloneqq \pi\left( 1 - (1-E) (1-(1-pY)E)^{-1} \right).
\end{equation}
\end{itemize}
\end{lemma}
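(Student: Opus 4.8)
The plan is to treat both parts as purely algebraic identities in $\A$, each valid once the Neumann series in sight converge; the smallness hypotheses on $Z$ (resp.\ on $E$, relative to $\|Y\|$) are precisely what will justify these convergences, together with the fact that $\pi$ is contractive ($\|\pi(X)\|=\|pXp\|\le\|X\|$), and I will not belabour that point. The only tools I expect to need are the multiplication rule $\pi(A)\pi(B)=\pi(ApB)$ of \eqref{piy}, its iterate $\pi(A_1)\pi(A_2)\cdots\pi(A_n)=\pi(A_1pA_2p\cdots pA_n)$, and the absorption identities $\pi(pA)=\pi(Ap)=\pi(A)$ coming from $p^2=p$.

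For (i), I would expand $(1+pZ)^{-1}=\sum_{n\ge0}(-pZ)^n$ and apply $\pi$ term by term. The key is that $\pi((pZ)^n)=\pi\bigl(Z(pZ)^{n-1}\bigr)=\pi(Z)^n$, since by the iterated multiplication rule all three equal the single element $[\,pZpZ\cdots pZp\,]$ with $n$ factors of $Z$; hence $\pi\bigl((1+pZ)^{-1}\bigr)=\sum_{n\ge0}(-1)^n\pi(Z)^n=(1_{\A_p}+\pi(Z))^{-1}$, which is (i). Conceptually, $\pi$ restricts to a unital algebra isomorphism of the corner $p\A p$ onto $\A_p$, so one is simply inverting $p+pZp$ inside that corner; I would mention this, though the term-by-term computation stands on its own.

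For (ii), the one genuinely substantive step is to decode the formula \eqref{ep}. Put $G:=(1-pY)E$ and $W:=YE(1-G)^{-1}$. I would first verify the resolvent identity $1-(1-E)(1-G)^{-1}=(E-G)(1-G)^{-1}=pW$ --- the first equality because $1-E=(1-G)-(E-G)$, the second because $E-G=pYE$ --- so that $E_{pY}=\pi(pW)=\pi(W)$ by absorption. Expanding $\Psi$ and using the iterated multiplication rule then gives $\Psi(E_{pY})=\sum_{n\ge1}\pi(W)^n=\sum_{n\ge1}\pi\bigl((Wp)^{n-1}W\bigr)=\pi\bigl((1-Wp)^{-1}W\bigr)$; equivalently, applying part (i) to $1_{\A_p}-\pi(W)$ gives $\Psi(E_{pY})=\pi\bigl((1-pW)^{-1}-1\bigr)$, the same thing after absorption and push-through.

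It then remains to establish the $\A$-level identity $(1-Wp)^{-1}W=Y\Psi(E)$, after which applying $\pi$ concludes the proof. Here I would invoke the push-through identity $(1-Wp)^{-1}W=W(1-pW)^{-1}$ together with the fact $1-pW=(1-E)(1-G)^{-1}$ already obtained above; then $(1-pW)^{-1}=(1-G)(1-E)^{-1}$, the $(1-G)^{-1}$ cancels, and $W(1-pW)^{-1}=YE(1-E)^{-1}=Y\Psi(E)$. I expect the only real difficulty to be bookkeeping --- tracking exactly where the projection $p$ sits when passing between $\A$, the corner $p\A p$, and $\A_p$, and being a little careful with the Neumann series. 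The ``miracle'' underlying the lemma is simply that the opaque expression in \eqref{ep} collapses to $pW$ for a single element $W$, and that $1-pW$ then factors so as to cancel the awkward $\bigl(1-(1-pY)E\bigr)^{-1}$; once that is in hand, nothing substantial is left.
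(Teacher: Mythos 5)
Your proof is correct and rests on the same algebraic identities (\eqref{piy} and absorption of $p$) as the paper's; the only differences are cosmetic: for (i) you expand the Neumann series term by term where the paper verifies $\pi((1+pZ)^{-1})(1+\pi(Z))=1$ in one line, and for (ii) you work from $\Psi(E_{pY})$ toward $\pi(Y\Psi(E))$ via the auxiliary $W=YE(1-G)^{-1}$, whereas the paper goes the opposite direction by applying (i) to $Z=Y\Psi(E)$ and using the same factorization $1+pY\Psi(E)=(1-(1-pY)E)(1-E)^{-1}$ (equivalently, your $1-pW=(1-E)(1-G)^{-1}$).
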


\begin{proof}  If $Z$ is small enough, then $1+pZ$ is invertible by Neumann series, and by \eqref{piy} we have
\begin{align*} 
\pi( (1+pZ)^{-1} ) (1+\pi(Z)) &= \pi( (1+pZ)^{-1} p (1+Z) p) \\
& = \pi( (1+pZ)^{-1} (1+pZ) p ) \\
&= \pi(p) \\
&= 1
\end{align*} 
giving (i).  For (ii), we apply (i) with $Z \coloneqq Y \Psi(E)$ to conclude that
$$ \left(1 + \pi( Y \Psi(E) )\right)^{-1} = \pi\left( \left(1 + p Y \Psi(E) \right)^{-1} \right).$$
Since
$$1 + pY \Psi(E) = (1-(1-pY)E) (1-E)^{-1}$$
we see from \eqref{ep} that
$$ \pi\left( \left(1 + pY \Psi(E) \right)^{-1} \right) = 1 - E_{pY}$$
and the claim (ii) then follows after some rearranging.
\end{proof}

Now set $Y=k$.  From \eqref{ep} and Neumann series we have
$$ E_{kp} = [pEp] - \sum_{n=1}^\infty [p((1-kp)E)^np] - [pE ((1-kp)E)^np] $$
when $E$ is sufficiently small in operator norm.  As $1-kp$ has trace zero, we conclude on taking traces that
\begin{equation}\label{Taup}
 \tau_p(E_{kp}) = 0
\end{equation}
whenever $E$ has trace zero, is sufficiently small in operator norm, and is freely independent from $p$.

This has the following consequence.  If $z$ is sufficiently large and $s = G_\mu(z)$, then from \eqref{gdef} we have
$$ s = \tau( (z - X)^{-1} )$$
and thus 
\begin{equation}\label{zae}
 (z-X)^{-1} = s ( 1 - E(s) )
\end{equation}
for some trace zero element $E(s) \in \A$, which will be small when $z$ is large.  Since $X$ is freely independent of $p$, $E(s)$ is also.  Meanwhile from \eqref{rg} one has
$$
R_\mu(s) + \frac{1}{s} = z$$
which when combined with \eqref{zae} and rearranging gives
$$ X = R_\mu(s) - \frac{1}{s} \Psi(E(s))$$
for all sufficiently small $s$.  Applying $k \pi$, we conclude that
$$ k \pi(X) = k R_\mu(s) - \frac{1}{s} \pi(k \Psi(E(s))).$$
Applying Lemma \ref{alg}(ii) and \eqref{taup}, we conclude that
$$ k \pi(X) = k R_\mu(s) - \frac{1}{s} \Psi(E_{kp}(s))$$
where $E_{kp}(s) \in \A_p$ obeys \eqref{taup}.  If we set $z' \coloneqq k R_\mu(s) + \frac{1}{s}$, we can rearrange this as
$$ (z' - k \pi(X))^{-1} = s (1 - E_{kp}(s))$$
and then on taking traces we conclude that
$$ s = \tau_p( (z' - k \pi(X))^{-1} ) = G_{k \pi(X)}(z).$$
From \eqref{rg} we then conclude that $R_{k\pi(X)}(s) = k R_\mu(s)$ for all sufficiently small $s$, giving the claim \eqref{pipx}.

\section{Monotonicity for microstates free entropy} \label{sec-microstates}

\begin{center}
	by David Jekel
\end{center}

In this section, we adapt the free probability proof of Theorem \ref{sev-var} to the microstates setting to obtain an analog of that theorem for Voiculescu's \emph{microstates free entropy} $\chi$, introduced in \cite{voi-entropy2}.  The main result is as follows.

\begin{theorem}[Monotonicity of microstate free entropy] \label{microstates}
Let $k \in [1,\infty)$.  Let $(\A,\tau)$ be a noncommutative probability space, let $X \in \A_{\sa}^n$ (i.e., $X$ is a tuple $(X_1,\dots,X_n)$ of self-adjoint elements of $\A$), and let $p$ be a projection of trace $1/k$ freely independent of $X$.  Let $\pi \colon \A \to [p\A p]$ be the compression map, and let $\Pi \coloneqq k^{1/2} \pi$ be the normalized compression.  Then $\chi(\Pi(X)) \geq \chi(X)$.
\end{theorem}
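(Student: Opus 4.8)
The plan is to transplant the proof of Theorem~\ref{sev-var} ``one matrix level down'': I work with a random matrix microstate model $\mathbf{A}^{(N)}$ for $X$, replace the free score $J$ and free Fisher information $\Phi^*$ by the \emph{classical} score $\nabla\log\rho$ and classical Fisher information of the model, replace the hypothesis ``free of $p$'' by ``invariant under simultaneous unitary conjugation'', and replace the integral formula \eqref{tik} (which converts a Fisher-information bound into an entropy bound) by the classical de~Bruijn formula. The role played by Lemma~\ref{indep-minor} (freeness preserved by minors) is played here by Voiculescu's asymptotic freeness of a unitarily invariant random matrix from a deterministic projection, which is what makes the minor of a microstate for $X$ a microstate for $\Pi(X)$.

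First I would recall that $\chi(X)$ is computed from normalised log-volumes of the microstate spaces $\Gamma_R(X;N,\ell,\eps)\subseteq(M_N(\C)_{\sa})^n$, and that these spaces are invariant under $\mathbf{A}\mapsto U\mathbf{A}U^*$. Hence (assuming $\chi(X)>-\infty$, else there is nothing to prove) one may fix $R$ and a conjugation-invariant random matrix model $\mathbf{A}^{(N)}$ for $X$, supported in $\{\|\cdot\|\le R\}$ (e.g.\ uniform on $\Gamma_R(X;N,\ell_N,\eps_N)$ with $\ell_N\to\infty$, $\eps_N\to0$ slowly), for which $\liminf_N\bigl(\tfrac1{N^2}h(\mathbf{A}^{(N)})+\tfrac n2\log N\bigr)=\chi(X)$, where $h$ is differential entropy for Lebesgue measure. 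Put $M\coloneqq\lfloor N/k\rfloor$, let $p_N\in M_N(\C)$ be the coordinate projection of rank $M$, and set $\mathbf{B}^{(M)}\coloneqq k^{1/2}(\mathbf{A}^{(N)})_{M\times M}$ (the matricial form of $\Pi$, applied coordinatewise). By Voiculescu's asymptotic freeness of $\mathbf{A}^{(N)}$ from $p_N$ (which holds because $\mathbf{A}^{(N)}$ is already conjugation-invariant), together with concentration of measure on $U(N)$ and the several-variable analogue of the moment computation in Proposition~\ref{minor-free}, $\mathbf{B}^{(M)}$ converges in probability, in $*$-distribution, to $\Pi(X)$; since $\|\mathbf{B}^{(M)}\|\le k^{1/2}R$ deterministically, $\mathbf{B}^{(M)}$ is a microstate for $\Pi(X)$ with probability $\to1$, and as the uniform distribution maximises entropy on a fixed support this yields $\limsup_N\bigl(\tfrac1{M^2}h(\mathbf{B}^{(M)})+\tfrac n2\log M\bigr)\le\chi(\Pi(X))$. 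Since $h(\mathbf{B}^{(M)})=\tfrac{nM^2}2\log k+h\bigl((\mathbf{A}^{(N)})_{M\times M}\bigr)$ and $M/N\to1/k$, proving $\chi(\Pi(X))\ge\chi(X)$ reduces to the ``entropy per coordinate does not decrease under minors'' estimate
$$\frac1{M^2}\,h\bigl((\mathbf{A}^{(N)})_{M\times M}\bigr)\ \ge\ \frac1{N^2}\,h(\mathbf{A}^{(N)})\ -\ o(1).$$

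To establish this I would run the classical heat flow. For $t>0$ let $G^{(N)}$ be a standard Gaussian tuple on $(M_N(\C)_{\sa})^n$: it is conjugation-invariant, and its $M\times M$ minor has the law of $G^{(M)}$, so $(\mathbf{A}^{(N)})_{M\times M}+\sqrt t\,G^{(M)}$ has the law of $\bigl(\mathbf{A}^{(N)}+\sqrt t\,G^{(N)}\bigr)_{M\times M}$. The classical de~Bruijn formula writes $\tfrac1{N^2}h(V)$ as $\tfrac12\int_0^\infty\bigl(\tfrac n{1+t}-\tfrac1{N^2}I(V+\sqrt t\,G^{(N)})\bigr)\,dt$ plus an additive constant independent of $N$ and $M$ (here $I$ is classical Fisher information), and likewise at level $M$; subtracting the two formulas reduces the displayed estimate to the Fisher-information inequality
$$\frac1{M^2}\,I\bigl(\mathbf{W}_{M\times M}\bigr)\ \le\ \frac1{N^2}\,I(\mathbf{W})\ +\ \delta_N(t),\qquad \mathbf{W}\coloneqq\mathbf{A}^{(N)}+\sqrt t\,G^{(N)},$$
with $\int_0^\infty\delta_N(t)\,dt\to0$. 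This is the classical analogue of the step $\Phi^*(k^{1/2}\pi(X_i):\cdots)\le\Phi^*(X_i:\cdots)$ in Section~\ref{score-proof}: the classical score of the marginal $\mathbf{W}_{M\times M}$ is the conditional expectation, given $\mathbf{W}_{M\times M}$, of the $M\times M$ sub-block of the score $\Xi(\mathbf{W})=\nabla\log\rho_{\mathbf{W}}$ of $\mathbf{W}$ (differentiate the marginal density under the integral sign), so Pythagoras gives $I(\mathbf{W}_{M\times M})\le\sum_i\E\,\|\Xi_i(\mathbf{W})_{M\times M}\|_{\mathrm{HS}}^2$. Because $\mathbf{W}$ is conjugation-invariant, $\Xi(\mathbf{W})$ is conjugation-equivariant, so averaging over a Haar unitary and using that the matrix score has vanishing expected trace ($\E\,\Tr\,\Xi_i(\mathbf{W})=0$, the analogue of \eqref{score-trace}), a Weingarten computation gives $\E\,\|\Xi_i(\mathbf{W})_{M\times M}\|_{\mathrm{HS}}^2=(M/N)^2\,\E\,\|\Xi_i(\mathbf{W})\|_{\mathrm{HS}}^2$ up to lower-order terms of size $O(1/N)$ — the factor $(M/N)^2=M^2/N^2$ being the sole main term, exactly mirroring how $k^{-2}$ enters via $\|\pi(J(X))\|_{\tau_p}^2=k^{-1}\tau(J(X)^2)$ in Section~\ref{score-proof}. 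Dividing by $M^2$ gives the inequality with an $o(1)$ error $\delta_N(t)$ that is uniform on compact $t$-intervals, and that for $t\to\infty$ (where $\mathbf{W}$ is asymptotically the exactly equivariant Gaussian $\sqrt t\,G^{(N)}$, for which the inequality is an identity) decays fast enough to be integrable.

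I expect the main obstacle to be exactly this last paragraph: carrying out the Weingarten/Haar-averaging estimate with enough uniformity in $t$ that $\int_0^\infty\delta_N(t)\,dt\to0$ survives the $N\to\infty$ limit; the conceptual point is the realisation that one should track the classical matrix entropy and Fisher information of a microstate model rather than a free quantity. Two further technical points need attention: the regularity required by de~Bruijn (supplied by the Gaussian smoothing for $t>0$, with $t=0$ reached only through a convergent integral, and, if necessary, a preliminary smoothing of $\mathbf{A}^{(N)}$ in the spirit of the footnote in Section~\ref{complex-sec}); and the fact that $p_N$ is only asymptotically, not exactly, free from $\mathbf{A}^{(N)}$ at finite $N$, which is precisely what produces the error terms $\delta_N(t)$ rather than an exact inequality as in the algebraic setting.
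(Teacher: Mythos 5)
Your overall strategy is the same as the paper's: pass from $\chi$ to normalized classical entropies of unitarily invariant random matrix microstate models (the paper's Proposition~\ref{prop:variational}), show that the classical score of the minor is the conditional expectation of the minor of the score (Lemma~\ref{class-score-minor}), apply Pythagoras, exploit unitary invariance of the model to compare the relevant Fisher informations, and integrate via de~Bruijn. However, two places where the paper's implementation diverges from your sketch are worth spelling out, because the second of them is a genuine problem for your plan, not merely a technicality.

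First, rather than averaging the minor over a Haar unitary and invoking Weingarten asymptotics, the paper averages over random \emph{permutation} matrices. Permutation conjugation preserves the diagonal, so a two-line combinatorial count of entries gives the \emph{exact} identity
\[
\mathbb{E}_\sigma \norm{\Pi^{(N,M)}(\sigma A \sigma^{-1})}_{M_M(\C)_{\sa}^n}^2
= \tfrac{N(M-1)}{M(N-1)}\cdot\tfrac{1}{N}\!\!\sum_{k,\,i\neq j}\!|a_{k,ij}|^2 + \tfrac{N}{M}\cdot\tfrac{1}{N}\sum_{k,i}|a_{k,ii}|^2,
\]
with no unquantified ``lower-order terms.'' Combined with the Stein identity $J^{(N)}(X_t)=\mathbb{E}[t^{-1/2}Z^{(N)}\,|\,X_t]$, which immediately yields $\mathbb{E}\norm{\diag J^{(N)}(X_t)}^2\le n/(Nt)$, this produces the clean bound $\mathcal I^{(M)}(\Pi^{(N,M)}(X_t))\le\mathcal I^{(N)}(X_t)+\tfrac{n}{Mt}$ rather than an asymptotic one.

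Second, and this is where your plan as written would fail: the error term one actually gets is of order $\tfrac{n}{Mt}$, and $\int_0^\infty \tfrac{dt}{t}$ diverges at both endpoints; your expectation that the error ``decays fast enough to be integrable'' as $t\to\infty$ is incorrect, since for the exactly Gaussian model the bound is still lossy by $\tfrac{n}{Mt}$ (the two Fisher informations are then equal, but the permutation/Weingarten bound does not detect this). So $\int_0^\infty\delta_N(t)\,dt$ does not tend to zero, and the subtraction of two one-sided de~Bruijn formulas over $(0,\infty)$ does not close. The paper circumvents this by running de~Bruijn only over a finite interval $[t_0,t_1]$ to compare entropy \emph{differences}, incurring the harmless error $\tfrac{n}{2M}\log(t_1/t_0)\to0$ as $N\to\infty$; the endpoint entropies are controlled by Shannon's inequality (at $t_0$) and the entropy-$\le$-variance bound (at $t_1$), and the limits are taken in the order $N\to\infty$, then $t_1\to\infty$, then $t_0\to0$, with the last step using upper semicontinuity of $\chi$ rather than any integrability of the Fisher information at $t=0$. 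This finite-range-plus-boundary-estimates device is the substantive idea you are missing.
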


The first step in the proof is to reformulate $\chi$ in terms of the classical entropy of random matrix approximations of $X$.  The second step is to apply a similar argument as in \S \ref{score-proof} for the \emph{classical} entropy and score functions, which results in an approximate version of \eqref{ko} for the minors of the random matrix models.

We first set up all the notation that we need.  We begin by recalling various classical information theory notions in the general context of random variables taking values in finite dimensional inner product spaces\footnote{All inner product spaces here will be over the reals.} $H$.

\begin{definition}[Classical information theory concepts]  Let $H$ be a finite dimensional inner product space, with inner product $\ip{u,v}_H$ and norm $\norm{u}_H$.  We let $\nu_H$ be the Haar measure canonically associated to $H$ (thus $\nu$ assigns unit mass to the unit cube generated by any orthonormal basis in $H$).
\begin{itemize}
\item[(i)]  If $X$ is square integrable, the \emph{total variance} $\Var_H(X)$ is given by the formula
$$\Var_H(X) \coloneqq \mathbb{E} \norm{X - \mathbb{E}X}_H^2.$$ 
\item[(ii)]  If $X$ is a (classical) random variable taking values in $H$ with absolutely continuous law $d\mu = \rho\ d\nu_H$, the \emph{classical (differential) entropy} $h_H(X)$ of $X$ is given by the formula
$$ -\int_{H} \rho \log \rho\ d\nu_H.$$
If there is no density $\rho$, the entropy is defined to equal $-\infty$. We also write $h_H(\mu)$ for $h_H(X)$.
\item[(iii)]  A \emph{standard gaussian random variable} in $H$ is a gaussian variable $Z_H$
 of mean zero and identity covariance matrix in the sense that
$$\mathbb{E} \ip{ u, Z_H }_H \ip{ Z_H, v }_H = \langle u, v \rangle_H$$ 
for all $u,v \in H$; equivalently, $Z_H$ has law $(2\pi)^{-\mathrm{dim}(H)/2} e^{-\norm{u}_H^2/2}\ d\nu_H$.  
\item[(iv)] If $X$ is a random variable taking values in $H$, then a random variable $J_H(X)$ is said to be a \emph{classical score} of $X$ (relative to the inner product $H$) if it lies in the $L^2$ closure of the algebra generated by $X$, and
\begin{equation} \label{class-score}
 \frac{d}{d\eps} \mathbb{E} \ip{ f(X + \eps Z_H), Z_H }_H|_{\eps = 0} = \mathbb{E} \ip{J_H(X), f(X) }_H
\end{equation}
for any $f \in C_c^\infty(H;H)$, where $Z_H$ is a standard gaussian variable in $H$ (classically) independent of $X$; compare with \eqref{tautau}. Note that if the classical score exists, it is unique.
\item[(v)] The \emph{classical Fisher information} of $X$ is $\mathcal{I}_H(X) \coloneqq \mathbb{E} \norm{J_H(X)}_H^2$ if a classical score $J_H(X)$ exists, and $\mathcal{I}_H(X) = +\infty$ otherwise.
\end{itemize}
\end{definition}

\begin{example}  If $H$ is a standard Euclidean space $\R^d$, and $X$ has a $C^1$ probability density $\rho$, then the classical score is given explicitly by $J_{\R^d}(X) = -\nabla \rho(X) / \rho(X)$ provided the latter is in $L^2$.  The classical Fisher information is then equal to $\mathcal{I}_{\R^d}(X) = \int_{\R^d} \frac{|\nabla \rho|^2}{\rho}$.
\end{example} 

\begin{example}\label{gauss-standard}  If $H$ is a $d$-dimensional Hilbert space, $Z_H$ is a standard gaussian variable in $H$, and $t>0$, then $\Var_H(t^{1/2} Z_H) = td$, $h_H(t^{1/2}Z_H) = \frac{d}{2} \log(2\pi e t)$, $J_H(t^{1/2} Z_H) = t^{-1/2} Z_H$, and $\mathcal{I}_H(t^{1/2} Z_H) = \frac{d}{t}$.  Thus we see that with this ``standard'' choice of normalization, most quantities scale linearly with the dimension $d$.  Later on we shall switch to a ``microstate'' choice of normalization that is better suited for passing to the free probability limit $d \to \infty$.
\end{example}

We now recall some standard properties of the above notions:

\begin{lemma}[Standard classical information theory facts]\label{prob-standard}  Let $H$ be a finite dimensional inner product space of some dimension $d$, with canonical Haar measure $\nu_H$. Let $X$ be a random variable taking values in $H$ with law $\mu$, and let $Z_H$ be a standard gaussian random variable in $H$ classically independent of $X$.
\begin{itemize}
\item[(i)]  (Entropy controlled by variance)  If $X$ has finite variance, then
\begin{equation}\label{entropy-max}
-\infty < h_H(X) \leq \frac{d}{2} \log(2\pi e \Var_H(X) / d).
\end{equation}
In particular, each multiple $t^{1/2} Z_H$ of $Z_H$ maximizes the entropy amongst all variables of the same variance.  
\item[(ii)]  (Entropy controlled by partition) Let $(S_j)_{j=1}^\infty$ be a measurable partition of $H$.  Then
\[
h_H(\mu) \leq \sum_{j=0}^\infty \mu(S_j) \log \nu_H(S_j) - \sum_{j=0}^\infty \mu(S_j) \log \mu(S_j)
\]
\item[(iii)]  (Shannon inequality) If $Y$ is a random variable in $H$ classically independent of $X$, then $h_H(X+Y) \geq h_H(X), h_H(Y)$.
\item[(iv)]  (Stein identity) If $t>0$, then the score $J_H(X + t^{1/2} Z_H)$ exists and is given by the formula
\begin{equation}\label{stein-ident}
J_H(X + t^{1/2} Z_H) = \mathbb{E}[t^{-1/2} Z_H | X + t^{1/2} Z_H].
\end{equation}
In particular the Fisher information $\mathcal{I}_H(X + t^{1/2}Z_H)$ is finite.
\item[(v)] (de Bruijn identity)  If $0 < t_0 < t_1$, we have the identity
\begin{equation}\label{debruijn-ident}
h_H(X + t_1^{1/2}Z_H) - h_H(X + t_0^{1/2} Z_H) = \frac{1}{2} \int_{t_0}^{t_1} \mathcal{I}_H(X + t^{1/2}Z_H)\,dt.
\end{equation}
\end{itemize}
\end{lemma}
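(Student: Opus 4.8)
The plan is to prove each of the five items by combining the non-negativity of relative entropy with elementary properties of the heat semigroup, reducing where possible to the case $H = \R^d$ recorded in the Example above; all five are classical (see e.g. \cite{voi-score}). For (i), write $\Sigma$ for the covariance of $X$, so that $\Var_H(X) = \tr\Sigma$. The Gaussian density $g$ of the law $N(\mathbb{E}X,\Sigma)$ satisfies $\int\rho\log g\,d\nu_H = -\tfrac12\log((2\pi e)^d\det\Sigma)$ --- the cross term depending only on the first two moments of $X$ --- so nonnegativity of $D(\mu\,\|\,N(\mathbb{E}X,\Sigma)) = -h_H(X) + \tfrac12\log((2\pi e)^d\det\Sigma)$ gives $h_H(X) \le \tfrac12\log((2\pi e)^d\det\Sigma)$, and \eqref{entropy-max} follows from $\det\Sigma \le (\tr\Sigma/d)^d$ (AM--GM on the eigenvalues of $\Sigma$). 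I would establish the lower bound $h_H(X) > -\infty$ only in the cases actually needed --- where $X$ has an additive Gaussian summand $t^{1/2}Z_H$ --- via $h_H(X) \ge h_H(t^{1/2}Z_H) = \tfrac d2\log 2\pi e t$, an instance of (iii).

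For (ii), I would introduce the piecewise-constant density $q := \sum_j \frac{\mu(S_j)}{\nu_H(S_j)} 1_{S_j}$ and rearrange the inequality $0 \le D(\rho\,d\nu_H \,\|\, q\,d\nu_H) = \int\rho\log\rho\,d\nu_H - \sum_j \mu(S_j)\log\frac{\mu(S_j)}{\nu_H(S_j)}$ (the degenerate cases $\nu_H(S_j) \in \{0,\infty\}$ making the right-hand side trivially $+\infty$). For (iii), I would condition on $Y$: $h_H(X+Y) \ge h_H(X+Y\mid Y) = h_H(X)$, using the translation-invariance of differential entropy and that conditioning does not increase it, and symmetrically $h_H(X+Y) \ge h_H(Y)$.

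For (iv), set $W := X + t^{1/2}Z_H$; its law has the smooth density $\rho_t = \mu * g_{tI}$, where $g_{tI}(u) = (2\pi t)^{-d/2} e^{-\norm{u}_H^2/2t}$. From $\nabla g_{tI}(u) = -t^{-1} u\, g_{tI}(u)$ one computes $-\nabla\log\rho_t(w) = t^{-1}\bigl(w - \mathbb{E}[X\mid W=w]\bigr) = \mathbb{E}\bigl[t^{-1}(W-X)\,\big|\,W=w\bigr] = \mathbb{E}[t^{-1/2}Z_H \mid W=w]$; since the classical score of $W$ equals $-\nabla\rho_t(W)/\rho_t(W)$ by the Example, and $\mathbb{E}[\,\cdot\mid W]$ is an $L^2$-contraction (so the right-hand side lies in $L^2$ of the algebra generated by $W$), this is precisely \eqref{stein-ident}, and $\mathcal{I}_H(W) < \infty$ follows. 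For (v), $\rho_t$ solves the heat equation $\partial_t\rho_t = \tfrac12\Delta\rho_t$, so differentiating under the integral and integrating by parts gives $\frac{d}{dt}h_H(W_t) = -\int(\partial_t\rho_t)(1+\log\rho_t)\,d\nu_H = -\tfrac12\int(\Delta\rho_t)\log\rho_t\,d\nu_H = \tfrac12\int \frac{\norm{\nabla\rho_t}_H^2}{\rho_t}\,d\nu_H = \tfrac12\mathcal{I}_H(W_t)$, using $\int\partial_t\rho_t\,d\nu_H = 0$ and the Fisher-information formula from the Example; integrating over $[t_0,t_1]$ yields \eqref{debruijn-ident}.

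The genuine, if routine, work lies in the analytic bookkeeping for (iv)--(v): one must check that $\rho_t$ and $\nabla\log\rho_t$ decay fast enough in $w$ to justify differentiation under the integral sign and the vanishing of boundary terms in the integrations by parts. This follows from the Gaussian tails of $g_{tI}$ together with the finite variance (or compact support) of $\mu$, uniformly for $t$ ranging in a compact subset of $(0,\infty)$. Everything else is an immediate consequence of the convexity of $s \mapsto s\log s$ (equivalently, of Jensen's inequality).
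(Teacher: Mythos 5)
Your proposal is correct and, for the only item the paper actually proves in-house, takes essentially the same route. The paper dispatches (i), (iii), (iv), (v) by citing Stam and Shannon, and proves only (ii), applying Jensen's inequality to the concave function $-t\log t$ on each cell $S_j$ separately and then summing; your relative-entropy formulation $D(\rho\,d\nu_H \,\|\, q\,d\nu_H) \ge 0$ with the piecewise-constant density $q$ is precisely the same Jensen computation packaged globally, and the degenerate cases $\nu_H(S_j) \in \{0,\infty\}$ are handled identically. Your proofs of (i), (iii), (iv), (v) are the standard ones contained in the cited references, so there is no genuine divergence of method.

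One point you raise deserves to be flagged: the lower bound $-\infty < h_H(X)$ in (i) is \emph{not} a consequence of finite variance alone (a point mass has zero variance and entropy $-\infty$), and you are right that it must be obtained some other way --- in the application it follows from the Shannon inequality (iii) applied to variables carrying an independent Gaussian summand. The paper does not address this because it simply points to the literature; your reading of what the lemma is really being used for is the correct one.
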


\begin{proof}  By using an orthonormal basis one can identify $H$ with a standard Euclidean space $\R^d$.  The facts (i), (iv), (v) are then well known and can be found for instance in \cite{stam}, and (iii) is similarly well known \cite{shannon}.  Now we prove (ii). If $\mu$ does not have a density, then $h_H(\mu) = -\infty$ and hence the claim is trivially true.  Assume that $\mu$ has a density $\rho$.  Then
\[
h_H(\mu) = -\sum_{j=0}^\infty \int_{S_j} \rho \log \rho\ d\nu_H.
\]
We apply Jensen's inequality to the concave function $-t \log t$ and the probability measure that is the push-forward by $\rho$ of the uniform distribution on $S_j$, and thus obtain
\begin{align*}
\frac{1}{\nu_H(S_j)} \int_{S_j} - \rho \log \rho\,dx &\leq -\left( \frac{1}{\nu_H(S_j)} \int_{S_j} \rho\ d\nu_H \right) \log \left( \frac{1}{\nu_H(S_j)} \int_{S_j} \rho\ d\nu_H\right) \\
&= -\frac{\mu(S_j)}{\nu_H(S_j)} \log \frac{\mu(S_j)}{\nu_H(S_j)},
\end{align*}
which produces the desired estimate.
\end{proof}

We will primarily work in the inner product space $M_N(\C)_{\sa}^n$ of $n$-tuples $X = (X_1,\dots,X_N)$ of $N \times N$ Hermitian matrices, with 
inner product
\[
\ip{X,Y}_{M_N(\C)_{\sa}^n} \coloneqq \sum_{j=1}^n \tr_N(X_j Y_j)
\]
defined using the normalized trace
$$ \tr_N \coloneqq \frac{1}{N} \Tr,$$
thus in particular we have the normalized Frobenius norms
\[
\norm{X}_{M_N(\C)_{\sa}^n}^2 = \sum_{j=1}^n \tr_N(X_j^2) = \frac{1}{N} \sum_{j=1}^n \Tr(X_j^2).
\]
This is an $nN^2$-dimensional inner product space.  If $Z_{M_N(\C)_{\sa}^n}$ is a standard gaussian random variable in $M_N(\C)_{\sa}^n$, then $Z_{M_N(\C)_{\sa}^n}$ is an ensemble of $n$ (classically) independent matrices, with each entry having variance $N$, for a total variance of $\Var_{M_N(\C)_{\sa}^n}(Z) = nN^2$.  To facilitate taking limits as $N \to \infty$, it is convenient to introduce the normalized gaussian variable
$$ Z^{(N)} \coloneqq \frac{1}{N} Z_{M_N(\C)_{\sa}^n}.$$
Thus $Z^{(N)}$ is an ensemble of $n$ (classically) independent GUE matrices, with each entry having variance $1/N$, converging to an $n$-tuple of freely independent semicircular random variables as $N \to \infty$ \cite{voi-limit-law}; we refer to such random variables $Z^{(N)}$ as \emph{GUE tuples} in $M_N(\C)_{\sa}^n$.  One easily computes the total variance
\begin{equation}\label{total-var}
\Var_{M_N(\C)_{\sa}^n}(t^{1/2} Z^{(N)}) = t n,
\end{equation}
classical entropy
\begin{equation}\label{entropies}
h_{M_N(\C)_{\sa}^n}(t^{1/2} Z^{(N)}) = \frac{nN^2}{2} \log(2\pi et) - nN^2 \log N,
\end{equation}
classical score
\begin{equation}\label{score}
J_{M_N(\C)_{\sa}^n}(t^{1/2} Z^{(N)}) = N^2 t^{-1/2} Z^{(N)},
\end{equation}
and classical Fisher information
\begin{equation}\label{fisher}
\mathcal{I}_{M_N(\C)_{\sa}^n}(t^{1/2} Z^{(N)}) = \frac{nN^4}{t}
\end{equation}
of multiples $t^{1/2} Z^{(N)}$ of GUE tuples for $t>0$.  Note that most of the quantities on the right-hand side depend on the matrix dimension $N$, which is undesirable for the purposes of extracting a meaningful limit as $N \to \infty$.  To facilitate the process of taking such a limit, we therefore introduce the \emph{normalized classical entropy}
$$ h^{(N)}(X) \coloneqq \frac{1}{N^2} h_{M_N(\C)_{\sa}^n}(X) + n \log N,$$
the \emph{normalized classical score}
$$ J^{(N)}(X) \coloneqq \frac{1}{N^2} J_{M_N(\C)_{\sa}^n}(X),$$
and the \emph{normalized classical Fisher information}
$$ \mathcal{I}^{(N)}(X) \coloneqq \mathbb{E} \norm{J^{(N)}(X)}_H^2 = \frac{1}{N^4} \mathcal{I}_{M_N(\C)_{\sa}^n}(X)$$
while leaving the variance unchanged:
$$ \Var^{(N)}(X) \coloneqq \Var_{M_N(\C)_{\sa}^n}(X).$$
Thus for instance we have
\begin{equation}\label{gauss-val}
\begin{split}
\Var^{(N)}(t^{1/2} Z^{(N)}) &= tn \\
h^{(N)}( t^{1/2} Z^{(N)} ) &= \frac{n}{2} \log(2\pi e t) \\
J^{(N)}( t^{1/2} Z^{(N)} ) &= t^{-1/2} Z^{(N)} \\
\mathcal{I}^{(N)}( t^{1/2} Z^{(N)} ) &= \frac{n}{t}.
\end{split}
\end{equation}
Comparing this with Example \ref{gauss-standard}, we see that these normalizations have lowered the ``effective dimension'' of $M_N(\C)_{\sa}^n$ from $nN^2$ to $n$.  With these ``microstate'' normalizations, the definition \eqref{class-score} of the classical score becomes
\begin{equation} \label{class-score-norm}
 \frac{d}{d\eps} \mathbb{E} \ip{ f(X + \eps Z^{(N)}), Z^{(N)} }_{M_N(\C)_{\sa}^n}|_{\eps = 0} = \mathbb{E} \ip{J^{(N)}(X), f(X) }_{M_N(\C)_{\sa}^n}
\end{equation}
the relationship \eqref{entropy-max} between classical entropy and variance becomes
\begin{equation}\label{entropy-max-norm}
-\infty < h^{(N)}(X) \leq \frac{n}{2} \log(2\pi e \Var^{(N)}(X) / n),
\end{equation}
the Stein identity \eqref{stein-ident} becomes
\begin{equation}\label{stein-ident-norm}
J^{(N)}(X + t^{1/2} Z^{(N)}) = \mathbb{E}[t^{-1/2} Z^{(N)} | X + t^{1/2} Z^{(N)}]
\end{equation}
and the de Bruijn identity \eqref{debruijn-ident} becomes
\begin{equation}\label{debruijn-ident-norm}
h^{(N)}(X + t_1^{1/2}Z^{(N)}) - h^{(N)}(X + t_0^{1/2} Z^{(N)}) = \frac{1}{2} \int_{t_0}^{t_1} \mathcal{I}^{(N)}(X + t^{1/2}Z^{(N)})\,dt.
\end{equation}
Note how there are no longer any factors of $N$ appearing explicitly in these assertions (other than in the superscripts and subscripts).  The reader is invited to verify that these identities and inequalities are compatible with \eqref{gauss-val}. See, e.g., \cite[\S16]{JekelThesis} for further explanation of these normalizations.

Now we introduce the definitions necessary to define microstate entropy.

\begin{definition}[Microstates free entropy, {cf.~\cite[\S 2.1]{voi-entropy2}}]\label{micro-def}  Let $n \geq 1$ and $R>0$, let
$(\mathcal{A},\tau)$ be a noncommutative probability space and let $X \in \A_{\sa}^n$ be an $n$-tuple of self-adjoint elements with operator norm $\norm{X}_{\op} \coloneqq \max_j \norm{X_j}_{\op} \leq R$.
\begin{itemize}
\item[(i)]  Let $\C \langle x_1,\dots,x_n \rangle$ be the $*$-algebra of noncommutative polynomials in formal self-adjoint variables $x_1$, \dots, $x_n$.  We define $\Sigma_{n,R}$ as the space of tracial positive linear functionals $\lambda: \C \langle x_1, \dots, x_n \rangle \to \C$ such that for all $i_1$, \dots, $i_\ell \in \{1,\dots,n\}$, we have $|\lambda(x_{i_1} \dots x_{i_\ell})| \leq R^\ell$.  We equip $\Sigma_{n,R}$ with the weak-$*$ topology.
\item[(ii)] We define the \emph{noncommutative law of $X$} in $\Sigma_{n,R}$ to be the linear functional $\lambda_X \in \Sigma_{n,R}$ defined by the formula
$$\lambda_X(p) \coloneqq \tau(p(X)).$$
In particular, in the case where $\A$ is $M_N(\C)$ and $\tau$ is the normalized trace $\tr_N \coloneqq \frac{1}{N} \Tr$, we have $\lambda_Y(p) = \tr_N(p(Y))$ for any $p \in \C\langle x_1,\dots,x_n \rangle$ and any $n$-tuple $(Y_1,\dots,Y_n) \in M_N(\C)_{\sa}^n$ of self-adjoint matrices in $M_N(\C)$. 
\item[(iii)] For an open set $\mathcal{U} \subseteq \Sigma_{n,R}$, we define the \emph{microstate space}\footnote{The condition $Y \in \Gamma_R^{(N)}(\mathcal{U})$ entails that $\lambda_Y \in \Sigma_{n,R}$ and hence $\norm{Y}_{\op} \leq R$.}
\[
\Gamma_R^{(N)}(\mathcal{U}) \coloneqq \{Y \in M_N(\C)_{\sa}^n: \lambda_Y \in \mathcal{U} \}.
\]
\item[(iv)] For $\lambda \in \Sigma_{n,R}$, we define\footnote{The corresponding definition in \cite{voi-entropy2} uses $\frac{1}{2} n \log N$ instead of $n \log N$, but this is due to the use of the un-normalized trace $\Tr$ instead of the normalized trace $\tr_N$ to define the Haar measure $\nu_{M_N(\C)_{\sa}^n}$.}
\[
\chi_R(\lambda) \coloneqq \inf_{\mathcal{U} \ni \lambda} \limsup_{N \to \infty} \frac{1}{N^2} \left( \log \nu_{M_N(\C)_{\sa}^n}(\Gamma_R^{(N)}(\mathcal{U})) + n \log N \right),
\]
where the infimum is taken over all neighborhoods $\mathcal{U}$ of $\lambda$ in $\Sigma_{n,R}$.  
\item[(v)]  We define $\chi(\lambda) \coloneqq \sup_{R'\geq R} \chi_{R'}(\lambda)$.  If $(\A,\tau)$ is a noncommutative probability space and $X \in \A_{\sa}^n$, then we also define $\chi(X) \coloneqq \chi(\lambda_X)$.
\end{itemize}
\end{definition}

The next proposition expresses the microstate entropy $\chi$ in terms of the normalized classical entropies $h^{(N)}$ introduced previously.

\begin{proposition}[Random matrix interpretation of microstates free entropy] \label{prop:variational}
Let $X$ be an $n$-tuple of self-adjoint noncommutative random variables from $(\mathcal{A},\tau)$.  Then $\chi(X)$ is the supremum of
\[
\limsup_{\ell \to \infty} h^{(N_\ell)}(X^{(\ell)})
\]
over all sequences of natural numbers $(N_\ell)_{\ell \in \N}$ tending to $\infty$ and all sequences random variables $(X^{(\ell)})_{\ell \in \N}$ from $M_{N_\ell}(\C)_{\sa}^n$ satisfying the following conditions:
\begin{enumerate}[(1)]
	\item $\lambda_{X^{(N_\ell)}}$ converges in probability to $\lambda_X$.
	\item For some $R > 0$, we have $\limsup_{\ell \to \infty} \norm{X^{(\ell)}}_{\op} \leq R$ in probability, where $\|X\|_{\op}$ denotes the supremum of the operator norms $\|X_i\|_{\op}$ of the components $X_1,\dots,X_n$ of $X$.
	\item There exist some constants $C > 0$ and $K > 0$ such that
\begin{equation}\label{nlarge}
	P(\norm{X^{(\ell)}}_{M_{N_\ell}(\C)_{\sa}^n} \geq C + \delta) \leq e^{-KN_\ell^2 \delta^2} \text{ for all } \delta > 0.
\end{equation}
\end{enumerate}
Furthermore, the supremum (if it is $> -\infty$) is witnessed by random matrices which are uniformly bounded in operator norm and unitarily invariant in distribution.
\end{proposition}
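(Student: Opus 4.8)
The plan is to prove the two inequalities ``$\chi(X) \le S$'' and ``$\chi(X) \ge S$'' separately, where $S$ is the supremum on the right-hand side. For ``$\chi(X) \le S$'' (which will also yield the last sentence), I would exhibit explicit admissible sequences: for $R' > 0$ and an open neighbourhood $\mathcal{U}$ of $\lambda_X$ in $\Sigma_{n,R'}$ with $\nu_{M_N(\C)_{\sa}^n}(\Gamma_{R'}^{(N)}(\mathcal{U})) > 0$, the uniform distribution on $\Gamma_{R'}^{(N)}(\mathcal{U})$ is a tuple $Y^{(N)}$ with law in $\mathcal{U}$, operator norm deterministically $\le R'$ (so conditions (2) and (3) are automatic and $Y^{(N)}$ is uniformly operator-norm bounded), unitarily invariant in distribution (both $\Gamma_{R'}^{(N)}(\mathcal{U})$ and $\nu_{M_N(\C)_{\sa}^n}$ are invariant under simultaneous conjugation), and with $h^{(N)}(Y^{(N)}) = \frac{1}{N^2}\log\nu_{M_N(\C)_{\sa}^n}(\Gamma_{R'}^{(N)}(\mathcal{U})) + n\log N$. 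Using that $\Sigma_{n,R'}$ is metrisable, I would diagonalise over a decreasing neighbourhood basis $\mathcal{U}_1 \supseteq \mathcal{U}_2 \supseteq \cdots$ at $\lambda_X$ and over the matrix dimensions realising the relevant $\limsup$ to produce a single sequence satisfying (1)--(3) with $\limsup_\ell h^{(N_\ell)}(Y^{(N_\ell)}) \ge \chi_{R'}(\lambda_X)$; taking the supremum over $R'$ gives $S \ge \chi(X)$, and since these witnesses are operator-norm bounded and unitarily invariant the final sentence follows as well (when $\chi(X) > -\infty$).

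For ``$\chi(X) \ge S$'' I would bound an arbitrary admissible sequence $(N_\ell)$, $(X^{(\ell)})$ with operator-norm bound $R$; write $H_\ell \coloneqq M_{N_\ell}(\C)_{\sa}^n$ and assume the law $\mu^{(\ell)}$ of $X^{(\ell)}$ is absolutely continuous (otherwise the bound is trivial). Fixing $R' > R$ and a neighbourhood $\mathcal{U}$ of $\lambda_X$ in $\Sigma_{n,R'}$, set $A_\ell \coloneqq \Gamma_{R'}^{(N_\ell)}(\mathcal{U})$ and $q_\ell \coloneqq P(X^{(\ell)} \in A_\ell)$; conditions (1)--(2) give $q_\ell \to 1$. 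Splitting the differential entropy of $\mu^{(\ell)}$ according to whether $X^{(\ell)} \in A_\ell$ yields the identity $h_{H_\ell}(\mu^{(\ell)}) = q_\ell h_{H_\ell}(\mu^{(\ell)}|_{A_\ell}) + (1-q_\ell) h_{H_\ell}(\mu^{(\ell)}|_{A_\ell^c}) + \eta(q_\ell)$ with $\eta(q) \coloneqq -q\log q - (1-q)\log(1-q) \in [0,\log 2]$; bounding $h_{H_\ell}(\mu^{(\ell)}|_{A_\ell}) \le \log\nu_{H_\ell}(A_\ell)$ via Lemma~\ref{prob-standard}(ii), and $h_{H_\ell}(\mu^{(\ell)}|_{A_\ell^c})$ via Lemma~\ref{prob-standard}(i) together with the fact (from condition (3)) that $\mathbb{E}\norm{X^{(\ell)}}_{H_\ell}^2$ is bounded uniformly in $\ell$, so that the conditional total variance is $O(1/(1-q_\ell))$, one arrives (dividing by $N_\ell^2$, adding $n\log N_\ell$, and noting a cancellation of the $n\log N_\ell$ terms) at
\[
h^{(N_\ell)}(X^{(\ell)}) \le q_\ell\Big(\tfrac{1}{N_\ell^2}\log\nu_{H_\ell}(A_\ell) + n\log N_\ell\Big) + (1-q_\ell)\tfrac{n}{2}\log\tfrac{c}{1-q_\ell} + o(1)
\]
for an absolute constant $c$. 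Since $q_\ell \to 1$, $t\log\tfrac1t \to 0$, and $\tfrac{1}{N^2}\log\nu_{M_N(\C)_{\sa}^n}(\Gamma_{R'}^{(N)}(\mathcal{U})) + n\log N$ is bounded above, letting $\ell \to \infty$ and then taking the infimum over $\mathcal{U}$ gives $\limsup_\ell h^{(N_\ell)}(X^{(\ell)}) \le \chi_{R'}(\lambda_X) \le \chi(X)$, that is, $S \le \chi(X)$.

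The main difficulty will be this second inequality, and the delicate point is that condition (1) supplies only convergence in probability with no rate, so the spillover mass $1-q_\ell$ is merely $o(1)$. If one bounds the conditional entropy on the complement of the microstate region by a Euclidean ball-volume bound, one picks up an error of order $(1-q_\ell)\cdot nN_\ell^2\log N_\ell$, which need not vanish after normalisation. The one genuinely non-routine step is instead to use the Gaussian/variance bound of Lemma~\ref{prob-standard}(i) there: the $-nN^2\log N$ it produces cancels exactly against the $+nN^2\log N$ built into the microstate normalisation of $h^{(N)}$, leaving only harmless $(1-q_\ell)\log\tfrac{1}{1-q_\ell}$-type errors. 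The remainder --- metrisability of $\Sigma_{n,R'}$ for the diagonalisation, the second-moment estimate from (3), and the elementary observation that $\limsup_\ell q_\ell a_\ell \le \limsup_\ell a_\ell$ whenever $q_\ell \to 1$ and the $a_\ell$ are bounded above --- is routine bookkeeping.
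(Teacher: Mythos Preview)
Your proof is correct, and the lower bound ($\chi(X)\le S$) is essentially identical to the paper's argument: both take uniform measures on microstate regions over a shrinking neighbourhood basis and diagonalise. For the upper bound ($S\le\chi(X)$) the paper takes a slightly different route: rather than your two-piece split $A_\ell\cup A_\ell^c$ together with the Gaussian maximum-entropy bound Lemma~\ref{prob-standard}(i) on the complement, it applies Lemma~\ref{prob-standard}(ii) with a \emph{countable} partition consisting of the microstate region $\Gamma_{R'}^{(N_\ell)}(\mathcal U)$, the remainder of the ball $B(0,C+1)$, and then annular shells $B(0,C+j)\setminus B(0,C+j-1)$ for $j\ge2$; the tail hypothesis (3) is used directly to control the contribution of each annulus, and the ball-volume asymptotic $\frac{1}{N^2}\log\nu(B(0,r))=-n\log N+n\log r+O(n)$ supplies the cancellation of the $n\log N$ terms that you obtain instead from the $1/d$ inside the logarithm in Lemma~\ref{prob-standard}(i). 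Your approach is arguably cleaner --- condition (3) enters only through the uniform second-moment bound, and the error is a single $(1-q_\ell)\log\frac{1}{1-q_\ell}$ term rather than an infinite sum --- while the paper's annular decomposition makes the role of the sub-Gaussian tail more explicit. Both arguments handle the same delicate point you identified (no rate on $1-q_\ell$) correctly.
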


\begin{proof}
First, let $(X^{(\ell)})_{\ell \in \N}$ be a sequence of random matrices as described above, and let $\mu^{(\ell)}$ be the associated probability measure.  Fix $R' > R$.  Let $\mathcal{U}$ be a neighborhood of the noncommutative law of $X$ in $\Sigma_{n,R'}$.  We apply Lemma \ref{prob-standard}(ii) with the partitition $S_j^{(\ell)}, j \geq 0$ of $M_N(\C)_{\sa}^n$ defined by
\begin{align*}
S_0^{(\ell)} &\coloneqq \Gamma_{R'}^{(N_\ell)}(\mathcal{U}) \\
S_1^{(\ell)} &\coloneqq B(0,C+1) \setminus \Gamma_{R'}^{(N_\ell)}(\mathcal{U}) \\
S_j^{(\ell)} &\coloneqq B(0,C+j) \setminus B(0,C+j-1) \text{ for } j \geq 2,
\end{align*}
where $B(0,r)$ denotes the ball of radius $r$ in $M_N(\C)_{\sa}^n$, to obtain
$$
h^{(N_\ell)}(X^{(N_\ell)}) \leq \sum_{j=0}^\infty H^{(\ell)}_j$$
where
$$ H^{(\ell)}_j \coloneqq \mu^{(\ell)}(S_j^{(\ell)}) \left( \frac{1}{N_\ell^2} \log \nu_{M_{N_\ell}(\C)_{\sa}^n}(S_j^{(\ell)}) + n \log N_\ell \right) - \mu^{(\ell)}(S_j^{(\ell)}) \frac{1}{N_\ell^2} \log \mu^{(\ell)}(S_j^{(\ell)}).
$$
We have
$$ H^{(\ell)}_0 = 
\mu^{(\ell)}(\Gamma_{R'}^{(N_\ell)}(\mathcal{U})) \left( \frac{1}{N_\ell^2} \log \nu_{M_{N_\ell}(\C)_{\sa}^n}(\Gamma_{R'}^{(N_\ell)}(\mathcal{U})) + n \log N_\ell \right)
- \mu^{(\ell)}(S_0^{(\ell)}) \frac{1}{N_\ell^2} \log \mu^{(\ell)}(S_0^{(\ell)}).$$
As $\ell \to \infty$, the second term on the right-hand side goes to zero (bounding $- t \log t \leq 1/e$ for any $t > 0$). From Definition \ref{micro-def}, we thus see that for any $\eps>0$ one can find ${\mathcal U}$ for which
$$ \limsup_{\ell \to \infty} H^{(\ell)}_0 \leq \chi(X)+\eps.$$
Next, to estimate $H_1^{(\ell)}$, we observe from a routine application of Stirling's formula (identifying the inner product space $M_{N_\ell}(\C)_{\sa}^n$ with a standard $nN_\ell^2$-dimensional Euclidean space) that
\begin{equation}\label{r-ball}
\frac{1}{N_\ell^2} \log \nu_{M_{N_\ell}(\C)_{\sa}^n}(B(0,r)) = - n \log N_\ell + n \log r + O(n) 
\end{equation}
for any $r>0$. Since $\nu_{M_{N_\ell}(\C)_{\sa}^n}(S_1^{(\ell)}) \leq \nu_{M_{N_\ell}(\C)_{\sa}^n}(B(0,C+1))$ and $\mu^{(\ell)}(S^{(\ell)}_1) \to 0$, we conclude that
$$
\limsup_{\ell \to \infty} H^{(\ell)}_1 \leq 0.$$
For the terms $j \geq 2$, we see from \eqref{r-ball}, \eqref{nlarge} and the fact that $-t \log t$ is increasing for $t \leq 1/e$ that
\begin{align*}
\limsup_{\ell \to \infty} \sum_{j=2}^\infty H^{(\ell)}_j &\leq 
\limsup_{\ell \to \infty} \sum_{j=2}^\infty e^{-KN_\ell^2(j-1)^2} ( n \log j + O(n) + K (j-1)^2 ) \\
&= 0
\end{align*}
Putting all these bounds together, and sending $\eps$ to zero, we conclude that
\[
\limsup_{\ell \to \infty} h^{(N_\ell)}(X^{(\ell)}) \leq \chi(X).
\]
Hence, the supremum of the $\limsup$'s of classical entropies is less than or equal to $\chi(X)$.
	
For opposite inequality, assume without loss of generality that $\chi(X) > -\infty$ since otherwise the inequality is trivial.  Fix $R > \norm{X}_{\op}$.  Let $(\mathcal{U}_\ell)_{\ell \in \N}$ be a sequence of nested neighborhoods of $\lambda_X$ in $\Sigma_{n,R}$ shrinking to $\lambda_X$ as $\ell \to \infty$.  For each $\ell$, choose a number $N_\ell$ such that
\[
\frac{1}{N_\ell^2} \log \nu_{M_{N_\ell}(\C)_{\sa}^n}(\Gamma_R^{(N_\ell)}(\mathcal{U}_\ell)) + n \log N_\ell > \limsup_{N \to \infty} \left( \frac{1}{N^2} \log \nu_{M_N(\C)_{\sa}^n}(\Gamma_{R'}^{(N)}(\mathcal{U})) + n \log N \right) - \frac{1}{\ell}.
\]
We can arrange that $N_{\ell+1} > N_\ell$ and hence $N_\ell \to \infty$.  Define $\mu^{(\ell)}$ to be the uniform measure on $\Gamma_R^{(N_\ell)}(\mathcal{U}_{\ell})$, and let $X^{(\ell)}$ be a random matrix tuple with distribution $\mu^{(\ell)}$.  Then
\[
h^{(N_\ell)}(\mu^{(\ell)}) = \frac{1}{N_\ell^2} \log \nu_{M_{N_\ell}(\C)_{\sa}^n}(\Gamma_R^{(N_\ell)}(\mathcal{U}_{\ell})) + n \log N_\ell.
\]
Hence,
\begin{align*}
\limsup_{N \to \infty} h^{(N_\ell)}(\mu^{(\ell)}) &\geq \limsup_{\ell \to \infty} \left( \limsup_{N \to \infty} \left( \frac{1}{N^2} \log \nu_{M_{N}(\C)_{\sa}^n}(\Gamma_{R'}^{(N)}(\mathcal{U})) + n \log N \right) - \frac{1}{\ell} \right) \\
&= \chi_R(\mu) = \chi(\mu),
\end{align*}
where the last equality follows from \cite[Proposition 2.4]{voi-entropy2}.  Moreover, it is clear that this choice of random matrix models is unitarily invariant and bounded in operator norm.
\end{proof}

\begin{remark} \label{rem:concentration}
Note that assumption (3) is trivially satisfied if $\norm{X^{(\ell)}}_{M_N(\C)_{\sa}^n} \leq C$.  It is also true of any random matrix models which satisfy Herbst's concentration inequality with a suitable normalization depending on the dimension $N_\ell$ (which in turn follows from a normalized log-Sobolev inequality).  In particular, this applies when $X^{(\ell)} = t^{1/2} Z^{(\ell)}$ for a GUE tuple $Z^{(\ell)}$ from $M_{N_\ell}(\C)_{\sa}^d$ and any fixed $t>0$. See \cite{gz} and \cite[\S 4.4.2]{agz}.  Herbst's concentration inequality also implies that (2) holds for some $R$ by \cite[Lemma 11.5.2]{JekelThesis}.
\end{remark}

\begin{remark}
Compare Proposition \ref{prop:variational} to the more explicit connections between microstates free entropy and classical entropy that occur for special random matrix models in \cite{voi-entropy2} and \cite[Proposition 16.1.4]{JekelThesis}.
\end{remark}

\begin{remark}
It was pointed out to us by Ben Hayes (private communication) that a similar idea to Proposition \ref{prop:variational} has already been used in the context of sofic entropy.  Bowen expressed the entropy of algebraic actions of residually finite groups as the supremum of the limits of classical entropies of certain measures on the model spaces (finitary approximations) \cite[Definition 4 and Theorem 4.1]{bowen}.  Similarly, Austin used this approach to define a version of sofic entropy in a more general context \cite{austin}.
\end{remark}

Now we give an analog of Proposition \ref{score-minor}:

\begin{lemma}[Classical score and minors]\label{class-score-minor}
Let $X$ be a random element of $M_N(\C)_{\sa}^n$ with finite classical Fisher information (in particular, the normalized classical score $J^{(N)}(X)$ exists).  Let $1 \leq M \leq N$, and let $\pi^{(N,M)} \colon M_N(\C)_{\sa}^n \to M_M(\C)_{\sa}^n$ be the compression map that sends a tuple $(X_1,\dots,X_n)$ in $M_N(\C)_{\sa}^n$ to the tuple consisting of the upper left $M \times M$ minors of $X_1,\dots,X_n$.  Define the normalized compression
$$ \Pi^{(N,M)} \coloneqq \frac{N^{1/2}}{M^{1/2}} \pi^{(N,M)}.$$
Then $\Pi^{(N,M)}(X)$ has a normalized classical score in $M_M(\C)_{\sa}^n$ given by the formula 
$$ J^{(M)}( \Pi^{(N,M)}(X) ) = \mathbb{E}\left[\Pi^{(N,M)}(J^{(N)}(X)) | \Pi^{(N,M)}(X)\right].$$
\end{lemma}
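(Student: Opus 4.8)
The plan is to follow the template of Proposition~\ref{score-minor}, replacing the free score by the classical score and the von Neumann algebra inner products by the normalized Hilbert--Schmidt inner products on $M_N(\C)_{\sa}^n$ and $M_M(\C)_{\sa}^n$. Write $\pi = \pi^{(N,M)}$, $\Pi = \Pi^{(N,M)}$, and set $W \coloneqq \mathbb{E}[\Pi(J^{(N)}(X)) \mid \Pi(X)]$ as the candidate score. A conditional expectation is an $L^2$-contraction and $\Pi$ is bounded in Hilbert--Schmidt norm, so $W \in L^2$ (using $\mathcal{I}^{(N)}(X)<\infty$, i.e. $J^{(N)}(X)\in L^2$); moreover $W$ is a function of $\Pi(X)$, hence lies in the $L^2$-closure of the algebra generated by $\Pi(X)$. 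Since the classical score is unique, it suffices to verify the defining identity~\eqref{class-score-norm} at matrix size $M$ for $W$: namely
$$\frac{d}{d\eps}\mathbb{E}\ip{g(\Pi(X) + \eps \widehat Z),\widehat Z}_{M_M(\C)_{\sa}^n}\Big|_{\eps=0} = \mathbb{E}\ip{W, g(\Pi(X))}_{M_M(\C)_{\sa}^n}$$
for all $g \in C_c^\infty(M_M(\C)_{\sa}^n; M_M(\C)_{\sa}^n)$, where $\widehat Z$ is a GUE tuple in $M_M(\C)_{\sa}^n$ classically independent of $\Pi(X)$.

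Two structural facts drive the argument. First, if $Z \coloneqq Z^{(N)}$ is a GUE tuple in $M_N(\C)_{\sa}^n$ classically independent of $X$, then $\widehat Z \coloneqq \Pi(Z)$ is a GUE tuple in $M_M(\C)_{\sa}^n$ (the factor $(N/M)^{1/2}$ in $\Pi$ rescales the entry variances from $1/N$ to $1/M$), and $\widehat Z$ is classically independent of $\Pi(X)$ since it is a function of $Z$. Second, a direct computation with the normalized traces shows that the adjoint of $\pi$ with respect to the two normalized inner products is $\tfrac{N}{M}\,\iota$, where $\iota \colon M_M(\C)_{\sa}^n \to M_N(\C)_{\sa}^n$ is the zero-padding embedding (insert a tuple as the upper-left $M\times M$ blocks, zero elsewhere): that is,
$$\ip{\pi(A), h}_{M_M(\C)_{\sa}^n} = \frac{N}{M}\,\ip{A, \iota(h)}_{M_N(\C)_{\sa}^n} \qquad (A \in M_N(\C)_{\sa}^n,\ h \in M_M(\C)_{\sa}^n),$$
and consequently the adjoint of $\Pi$ is $(N/M)^{3/2}\,\iota$.

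Now set $F \coloneqq \iota \circ g \circ \Pi$, a smooth function on $M_N(\C)_{\sa}^n$ that is bounded with bounded derivatives. Using linearity of $\Pi$ to write $\Pi(X)+\eps\widehat Z = \Pi(X+\eps Z)$ and applying the adjoint identity above, one gets $\mathbb{E}\ip{g(\Pi(X)+\eps\widehat Z),\widehat Z}_{M_M(\C)_{\sa}^n} = (N/M)^{3/2}\,\mathbb{E}\ip{F(X+\eps Z),Z}_{M_N(\C)_{\sa}^n}$ for every $\eps$. Differentiating at $\eps=0$ and invoking the size-$N$ identity~\eqref{class-score-norm} for the test function $F$ yields $(N/M)^{3/2}\,\mathbb{E}\ip{J^{(N)}(X), \iota(g(\Pi(X)))}_{M_N(\C)_{\sa}^n}$; applying the adjoint identity in the reverse direction converts this to $(N/M)^{3/2}(M/N)^{3/2}\,\mathbb{E}\ip{\Pi(J^{(N)}(X)), g(\Pi(X))}_{M_M(\C)_{\sa}^n}$. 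The powers of $N/M$ cancel — a useful check that $\Pi = (N/M)^{1/2}\pi$ is the correct normalization — and since $g(\Pi(X))$ is a function of $\Pi(X)$ we may replace $\Pi(J^{(N)}(X))$ by its conditional expectation $W$ (the same "delete the projection" step as in Proposition~\ref{score-minor}), obtaining exactly $\mathbb{E}\ip{W, g(\Pi(X))}_{M_M(\C)_{\sa}^n}$. This is the defining identity, so $J^{(M)}(\Pi(X)) = W$.

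The one point requiring care — the expected main obstacle — is that $F$ is not compactly supported, because the compression $\pi$ is not proper (the lower-right block of its argument is unconstrained), so~\eqref{class-score-norm} does not apply to $F$ verbatim. This is handled by a routine truncation: apply~\eqref{class-score-norm} to $\chi(\cdot/R)\,F \in C_c^\infty$ for a fixed $\chi \in C_c^\infty$ equal to $1$ near the origin, and let $R\to\infty$. Dominated convergence applies on both sides, since $F$ and its first derivatives are bounded, $\norm{Z}$ and $\norm{Z}^2$ are integrable (Gaussian), and $J^{(N)}(X)\in L^2$; this also justifies differentiating under the expectation in $\eps$. With this extension of~\eqref{class-score-norm} in hand, the remainder is the bookkeeping with the two normalized inner products described above.
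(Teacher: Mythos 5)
Your proposal is correct and follows essentially the same route as the paper's proof: reduce to verifying the defining identity \eqref{class-score-norm} for the candidate $W=\mathbb{E}[\Pi(J^{(N)}(X))\mid \Pi(X)]$, use that $\Pi(Z^{(N)})$ is a GUE tuple in $M_M(\C)_{\sa}^n$ independent of $\Pi(X)$, drop the conditional expectation against the function of $\Pi(X)$, and pass to the level-$N$ identity via the zero-padding embedding. Your explicit computation of the adjoint of $\Pi$ (and the observation that the $(N/M)^{3/2}$ factors cancel) matches what the paper does implicitly, and your truncation argument for the non-compactly-supported test function $\iota\circ g\circ\Pi$ is a small technical improvement over the paper, which invokes \eqref{class-score-norm} directly at that step without remarking on the support issue.
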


\begin{proof}
Let $Z^{(N)}$ be a GUE tuple in $M_N(\C)_{\sa}^n$ (classically) independent of $X$.  Then it is easy to see that $\Pi^{(N,M)}(Z^{(N)})$ is a GUE tuple in $M_M(\C)_{\sa}^n$ (classically) independent of $\Pi^{(N,M)}(X)$.  By \eqref{class-score-norm}, it suffices to show that
\begin{align*}
& \frac{d}{d\eps} \mathbb{E} \left\langle f\left(\Pi^{(N,M)}(X) + \eps \Pi^{(N,M)}(Z^{(N)})\right), \Pi^{(N,M)}(Z^{(N)}) \right\rangle_{M_M(\C)_{\sa}^n}|_{\eps = 0} \\
&\quad = \mathbb{E} \left\langle \mathbb{E}\left[\Pi^{(N,M)}(J^{(N)}(X)) | \Pi^{(N,M)}(X)\right], f\left(\Pi^{(N,M)}(X)\right) \right\rangle_{M_M(\C)_{\sa}^n}
\end{align*}
for any smooth $f \colon M_M(\C)_{\sa}^n \to M_M(\C)_{\sa}^n$.  We can remove the conditional expectation on the right-hand side, thus reducing to
\begin{align*}
& \frac{d}{d\eps} \mathbb{E} \left\langle f(\Pi^{(N,M)}(X) + \eps \Pi^{(N,M)}(Z^{(N)})), \Pi^{(N,M)}(Z^{(N)}) \right\rangle_{M_M(\C)_{\sa}^n}|_{\eps = 0} \\
&\quad = \mathbb{E} \left\langle \Pi^{(N,M)}(J^{(N)}(X)),  f(\Pi^{(N,M)}(X)) \right\rangle_{M_M(\C)_{\sa}^n}.
\end{align*}
Embedding $M_M(\C)_{\sa}^n$ into $M_N(\C)_{\sa}^n$ by padding zero entries to the $M \times M$ matrices to create $N \times N$ matrices, this simplifies further to
$$
 \frac{d}{d\eps} \mathbb{E} \left\langle f(\Pi^{(N,M)}(X + \eps Z^{(N)})), Z^{(N)} \right\rangle_{M_N(\C)_{\sa}^n}|_{\eps = 0} = \mathbb{E} \left\langle J^{(N)}(X), f(\Pi^{(N,M)}(X)) \right\rangle_{M_N(\C)_{\sa}^n}.
$$
But this follows from \eqref{class-score-norm}.
\end{proof}

As a consequence we can establish a classical analog of \eqref{ko}, except that there is an error coming from the diagonal elements of the matrix (which will end up going to zero in the limit as $N \to \infty$).

\begin{corollary}[Approximate monotonicity of normalized classical Fisher information]
Let the notation and hypotheses be as in Lemma \ref{class-score-minor}.  If the distribution of $X$ is additionally invariant under unitary conjugation, one has
\begin{equation} \label{eq:approximateKO}
\mathcal{I}^{(M)}\left(\Pi^{(N,M)}(X)\right) \leq \mathcal{I}^{(N)}(X) + \frac{N}{M} \mathbb{E} \norm{\diag(J^{(N)}(X))}_{M_N(\C)_{\sa}^n}^2,
\end{equation}
where $\diag(A) = (\diag(A_1),\dots,\diag(A_n))$ is the orthogonal projection onto the space of diagonal matrices of a tuple $A \in M_N(\C)_{\sa}^n$.
\end{corollary}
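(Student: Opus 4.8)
The plan is to run the free-probability derivation of \eqref{phip} (and hence \eqref{ko}) in the classical matrix setting, the role played there by the fact that $J(X)$ is free of $p$ now being taken over by the unitary invariance of the classical score $J^{(N)}(X)$. First I would apply Lemma~\ref{class-score-minor} to write
$$ J^{(M)}(\Pi^{(N,M)}(X)) = \E[\Pi^{(N,M)}(J^{(N)}(X)) \mid \Pi^{(N,M)}(X)], $$
and then use that conditional expectation is an $L^2$-orthogonal projection (Pythagoras) to conclude
$$ \mathcal{I}^{(M)}(\Pi^{(N,M)}(X)) = \E\norm{J^{(M)}(\Pi^{(N,M)}(X))}_{M_M(\C)_{\sa}^n}^2 \le \E\norm{\Pi^{(N,M)}(J^{(N)}(X))}_{M_M(\C)_{\sa}^n}^2 . $$
It then remains to bound the last quantity by $\mathcal{I}^{(N)}(X) + \tfrac{N}{M}\,\E\norm{\diag(J^{(N)}(X))}_{M_N(\C)_{\sa}^n}^2$.

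Write $A \coloneqq J^{(N)}(X) \in M_N(\C)_{\sa}^n$. The one structural input I need is that $A$ is again unitarily invariant in distribution. To see this, note that $A = F(X)$ for a Borel map $F$ (the score lies in the $L^2$-closure of the algebra generated by $X$), and that the defining relation \eqref{class-score-norm}, together with the $U(N)$-invariance of the trace pairing $\ip{\cdot,\cdot}_{M_N(\C)_{\sa}^n}$ and the unitary invariance of the GUE tuple $Z^{(N)}$, forces $F(UXU^*) = U F(X) U^*$ for all $U \in U(N)$; since $UXU^* \overset{d}{=} X$ and the classical score depends only on the law of its argument, this yields $UAU^* \overset{d}{=} A$. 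In particular, conjugating by permutation matrices, the quantity $d_j \coloneqq \E|A_j(i,i)|^2$ is independent of the diagonal position $i$, and $o_j \coloneqq \E|A_j(i,i')|^2$ is independent of the off-diagonal position $i \ne i'$.

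From $\norm{B}_{M_N(\C)_{\sa}^n}^2 = \tfrac1N\sum_j\sum_{i,i'}|B_j(i,i')|^2$ (and the analogous expressions for the diagonal part of $B$ and for the top-left $M\times M$ compression, where one sums over the $M$ diagonal and $M(M-1)$ off-diagonal positions of each block), the invariance of $A$ then gives
$$ \mathcal{I}^{(N)}(X) = \E\norm{A}_{M_N(\C)_{\sa}^n}^2 = \sum_{j=1}^n d_j + (N-1)\sum_{j=1}^n o_j, \qquad \tfrac{N}{M}\,\E\norm{\diag(A)}_{M_N(\C)_{\sa}^n}^2 = \tfrac{N}{M}\sum_{j=1}^n d_j, $$
and, using $\Pi^{(N,M)} = (N/M)^{1/2}\pi^{(N,M)}$,
$$ \E\norm{\Pi^{(N,M)}(A)}_{M_M(\C)_{\sa}^n}^2 = \tfrac{N}{M}\sum_{j=1}^n d_j + \tfrac{N(M-1)}{M}\sum_{j=1}^n o_j. $$
Plugging these into \eqref{eq:approximateKO} and cancelling the common term $\tfrac{N}{M}\sum_j d_j$, the desired inequality reduces to
$$ \Bigl(\tfrac{N(M-1)}{M} - (N-1)\Bigr)\sum_{j=1}^n o_j \;\le\; \sum_{j=1}^n d_j, $$
which holds because $\tfrac{N(M-1)}{M} - (N-1) = \tfrac{M-N}{M} \le 0$ (as $M \le N$) while $d_j, o_j \ge 0$, so the left side is nonpositive.

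The only step that is not routine bookkeeping is the unitary invariance of $A = J^{(N)}(X)$ in the second paragraph, which is the classical analogue of the fact (used in the free proof) that $J(X)$, lying in the closure of the algebra generated by $X$, is free of $p$. It should cause no real difficulty: if one prefers to avoid discussing equivariance of $F$, one can instead check directly that the pair $(UXU^*, UAU^*)$ satisfies the score relation \eqref{class-score-norm} with $Z^{(N)}$ replaced by the GUE tuple $UZ^{(N)}U^*$, so that $UAU^* = J^{(N)}(UXU^*) \overset{d}{=} J^{(N)}(X) = A$.
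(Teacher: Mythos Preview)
Your proof is correct and follows essentially the same route as the paper. Both arguments start from Lemma~\ref{class-score-minor} and Pythagoras, and both then reduce the norm estimate to the elementary inequality $\tfrac{N(M-1)}{M}\le N-1$ via the permutation invariance of the law of $J^{(N)}(X)$; the only cosmetic difference is that the paper introduces an auxiliary random permutation $\sigma$ and computes $\E_\sigma\norm{\Pi^{(N,M)}(\sigma A\sigma^{-1})}^2$ for a fixed realization $A$, whereas you appeal directly to the common second moments $d_j,o_j$ of the entries.
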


\begin{proof}
From Lemma \ref{class-score-minor}, one has
\begin{align*}
\mathcal{I}^{(M)}\left( \Pi^{(N,M)}(X)\right) &= \mathbb{E} \norm{\mathbb{E}\left[\Pi^{(N,M)}(J^{(N)}(X))  | \Pi^{(N,M)}(X)\right]}_{M_M(\C)_{\sa}^n}^2 \\
&\leq \mathbb{E} \norm{\Pi^{(N,M)}(J^{(N)}(X))}_{M_M(\C)_{\sa}^n}^2
\end{align*}
Now let $\sigma$ be a random permutation matrix in $U(N)$, drawn using Haar measure, (classically) independent of $X$.  From the unitary invariance of $X$ we then have
$$ \mathbb{E} \norm{\Pi^{(N,M)}(J^{(N)}(X))}_{M_M(\C)_{\sa}^n}^2 = 
\mathbb{E}_X \mathbb{E}_\sigma \norm{\Pi^{(N,M)}(\sigma J^{(N)}(X) \sigma^{-1})}_{M_M(\C)_{\sa}^n}^2$$
where we use $\mathbb{E}_\sigma$ to denote taking expectation just over $\sigma$, and $\mathbb{E}_X$ to denote taking expectation over the variable $X$ (which is independent of $\sigma$).
For any (deterministic) tuple $A = (A_1,\dots,A_n)$ in $M_N(\C)_{\sa}^n$, with $a_{k,ij}$ denoting the $ij$ entry of $A_k$, direct computation shows that
$$ \| A \|_{M_N(\C)_{\sa}^n}^2 = \frac{1}{N} \sum_{k=1}^n \sum_{1 \leq i,j \leq N} |a_{k,ij}|^2$$
and
$$ \mathbb{E}_\sigma \norm{\Pi^{(N,M)}(\sigma A \sigma^{-1})}_{M_M(\C)_{\sa}^n}^2
= \frac{1}{N} \frac{N(M-1)}{M(N-1)} \sum_{k=1}^n \sum_{1 \leq i,j \leq N: i \neq j} |a_{k,ij}|^2
+ \frac{1}{N} \frac{N}{M} \sum_{k=1}^n \sum_{i=1}^N |a_{k,ii}|^2 $$
and thus (since $N(M-1) \leq M(N-1)$)
$$ \mathbb{E}_\sigma \norm{\Pi^{(N,M)}(\sigma A \sigma^{-1})}_{M_M(\C)_{\sa}^n}^2 \leq \| A \|_{M_N(\C)_{\sa}^n}^2 + \frac{N}{M} \| \mathrm{diag}(A) \|_{M_N(\C)_{\sa}^n}^2.$$
Replacing $A = J^{(N)}(X)$ for each possible value of $X$ and then applying the expectation $\mathbb{E}_X$, we conclude
$$ \mathbb{E} \norm{\Pi^{(N,M)}(J^{(N)}(X))}_{M_M(\C)_{\sa}^n}^2 \leq 
\mathbb{E} \norm{J^{(N)}(X)}_{M_N(\C)_{\sa}^n}^2 + \frac{N}{M} \mathbb{E} \norm{\mathrm{diag}(J^{(N)}(X))}_{M_M(\C)_{\sa}^n}^2$$
and the claim follows.
\end{proof}

We now integrate this to obtain

\begin{lemma}[Approximate monotonicity of normalized classical entropy differences]\label{lem:classicalentropyinequality}
Let $X$ be a random element of $M_N(\C)_{\sa}^n$ with finite variance and unitarily invariant distribution.  Let $Z^{(N)}$ be a GUE tuple in $M_N(\C)_{\sa}^n$ (classically) independent of $X$, and set $X_t \coloneqq X + t^{1/2} Z^{(N)}$.  Let $1 \leq M \leq N$, and let $\Pi^{(N,M)}$ be the normalized compression operator from Lemma \ref{class-score-minor}.  Then for $0 < t_0 < t_1$,
\begin{align*}
h^{(M)}(\Pi^{(N,M)}(X_{t_1})) - h^{(M)}(\Pi^{(N,M)}(X_{t_0}))
 \leq h^{(N)}(X_{t_1}) - h^{(N)}(X_{t_0}) + \frac{n}{2M} \log \frac{t_1}{t_0}.
\end{align*}
\end{lemma}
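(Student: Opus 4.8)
The plan is to differentiate the claimed inequality in the regularizing parameter $t$: by the normalized de Bruijn identity \eqref{debruijn-ident-norm} each side is an integral of a normalized classical Fisher information, so one can apply the preceding Corollary (approximate monotonicity of normalized classical Fisher information) pointwise in $t$ and integrate. Concretely, first I would note that $\Pi^{(N,M)}(X_t) = \Pi^{(N,M)}(X) + t^{1/2}\Pi^{(N,M)}(Z^{(N)})$ by linearity of the compression, and that, exactly as observed in the proof of Lemma \ref{class-score-minor}, $\Pi^{(N,M)}(Z^{(N)})$ is a GUE tuple in $M_M(\C)_{\sa}^n$ that is (classically) independent of $\Pi^{(N,M)}(X)$. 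Since $X$, and hence also $\Pi^{(N,M)}(X)$, has finite variance, \eqref{debruijn-ident-norm} applies at both dimensions $M$ and $N$ and gives
\begin{align*}
h^{(M)}(\Pi^{(N,M)}(X_{t_1})) - h^{(M)}(\Pi^{(N,M)}(X_{t_0})) &= \frac{1}{2}\int_{t_0}^{t_1} \mathcal{I}^{(M)}\bigl(\Pi^{(N,M)}(X_t)\bigr)\, dt,\\
h^{(N)}(X_{t_1}) - h^{(N)}(X_{t_0}) &= \frac{1}{2}\int_{t_0}^{t_1} \mathcal{I}^{(N)}(X_t)\, dt.
\end{align*}

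Next, for each fixed $t > 0$ I would apply the Corollary with $X_t$ in place of $X$. This is legitimate because $X_t$ is unitarily invariant in distribution, being the sum of the unitarily invariant $X$ and the independent, unitarily invariant GUE tuple $t^{1/2}Z^{(N)}$, and it has finite classical Fisher information by the Stein identity \eqref{stein-ident-norm}. The Corollary then gives
$$
\mathcal{I}^{(M)}\bigl(\Pi^{(N,M)}(X_t)\bigr) \leq \mathcal{I}^{(N)}(X_t) + \frac{N}{M}\,\mathbb{E}\,\norm{\diag(J^{(N)}(X_t))}_{M_N(\C)_{\sa}^n}^2 .
$$
Subtracting the second de Bruijn identity from the first and inserting this inequality, it remains only to bound the diagonal error term.

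For that I would again use \eqref{stein-ident-norm}, which yields $J^{(N)}(X_t) = \mathbb{E}[\,t^{-1/2}Z^{(N)} \mid X_t\,]$; since $\diag(\cdot)$ is a deterministic linear contraction (an orthogonal projection onto diagonal matrices) commuting with the conditional expectation, Jensen's inequality gives
$$
\mathbb{E}\,\norm{\diag(J^{(N)}(X_t))}_{M_N(\C)_{\sa}^n}^2 \;\leq\; t^{-1}\,\mathbb{E}\,\norm{\diag(Z^{(N)})}_{M_N(\C)_{\sa}^n}^2 \;=\; \frac{n}{Nt},
$$
the last equality since each of the $n$ GUE components of $Z^{(N)}$ has $N$ diagonal entries of variance $1/N$, so that the expected normalized Frobenius norm squared of its diagonal part equals $\tfrac1N\cdot N\cdot\tfrac1N = \tfrac1N$. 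Hence the error term is at most $\frac{n}{Mt}$, and integrating,
$$
\frac{1}{2}\int_{t_0}^{t_1} \frac{n}{Mt}\,dt = \frac{n}{2M}\log\frac{t_1}{t_0},
$$
which combined with the two displays above yields the asserted inequality.

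I do not expect a genuine obstacle: the argument is a routine combination of the de Bruijn identity and the Corollary. The only points requiring a little care are the verification that the Corollary is applicable along the entire flow $t \mapsto X_t$ (namely that $X_t$ remains unitarily invariant and has finite Fisher information) and the elementary evaluation of $\mathbb{E}\,\norm{\diag(Z^{(N)})}_{M_N(\C)_{\sa}^n}^2$; both are immediate.
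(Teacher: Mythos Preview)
Your proposal is correct and follows essentially the same approach as the paper: reduce via the normalized de Bruijn identity to a pointwise Fisher information inequality, apply the preceding Corollary to $X_t$, and control the diagonal error term using the Stein identity together with the contraction property of conditional expectation to obtain $\mathbb{E}\norm{\diag(J^{(N)}(X_t))}^2 \leq n/(Nt)$. Your version is slightly more explicit in checking unitary invariance and finite Fisher information of $X_t$, but the argument is otherwise identical.
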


\begin{proof}  Note that $\Pi^{(N,M)}(X_t) = \Pi^{(N,M)}(X) + t^{1/2} \Pi^{(N,M)}(Z^{(N)})$ and that $\Pi^{(N,M)}(Z^{(N)})$ is a GUE tuple in $M_M(\C)_{\sa}^n$ classically independent of $\Pi^{(N,M)}(X)$.  Hence by two applications of \eqref{debruijn-ident-norm} it suffices to establish the inequality
$$
\mathcal{I}^{(M)}(\Pi^{(N,M)}(X_t))  \leq \mathcal{I}^{(N)}(X_t) + \frac{n}{Mt}
$$
for all $t > 0$. By \eqref{eq:approximateKO}, it suffices to show that
$$
\mathbb{E} \norm{\diag(J^{(N)}(X_t))}_{M_N(\C)_{\sa}^n}^2 \leq \frac{n}{Nt}.
$$
By \eqref{stein-ident-norm}, we have $J^{(N)}(X_t) = \mathbb{E}[ t^{-1/2} Z^{(N)} | X_t]$.  In particular,
\[
\mathbb{E} \norm{\diag(J^{(N)}(X_t))}_{M_N(\C)_{\sa}^n}^2 \leq \mathbb{E} \norm{t^{-1/2} \diag(Z^{(N)})}_{M_N(\C)_{\sa}^n}^2 = \frac{n}{Nt}
\]
as required.
\end{proof}

\begin{proof}[Proof of Theorem \ref{microstates}]
Consider a self-adjoint $n$-tuple $X$ from $(\A,\tau)$ and a freely independent projection $p$ of trace $1/k$ in $\A$.  By enlarging $(\A,\tau)$ if necessary assume it contains a tuple $Z = (Z_1,\dots,Z_n)$ of semicircular variables freely independent of each other and of $X$ and $p$.  Set $X_t \coloneqq X + t^{1/2} Z$ for every $t \geq 0$, and let $\Pi$ be the normalized compression $\Pi \coloneqq k^{1/2} \pi$.

We can assume without loss of generality that $\chi(X) > -\infty$ since otherwise the inequality is trivial.  By Proposition \ref{prop:variational}, there exists a sequence of integers $N_\ell$ tending to $\infty$ and random $N_\ell \times N_\ell$ matrix tuples $X^{(\ell)}$ with $\norm{X^{(\ell)}} \leq R$ and $\lambda_{X^{(\ell)}} \to \mu$ in probability, such that
\[
\chi(X) = \limsup_{\ell \to \infty} h^{(N_\ell)}(X^{(\ell)}).
\]
Let $M_\ell \coloneqq \lceil N_\ell/k \rceil$, so that $M_\ell / N_\ell \to 1/k$ as $\ell \to \infty$.  Let $Z^{(\ell)}$ be a GUE tuple in $M_{N_\ell}(\C)_{\sa}^n$ (classically) independent of $X^{(\ell)}$, and set $X_t^{(\ell)} \coloneqq X^{(\ell)} + t^{1/2} Z^{(\ell)}$ for $t \geq 0$.  Let $\Pi^{(N_\ell,M_\ell)}$ be the normalized compression operator from Lemma \ref{class-score-minor}, and let $P^{(\ell)} \in M_{N_\ell}(\C)$ be the orthogonal projection matrix onto the span of the first $M_\ell$ basis vectors.

It is a standard result in random matrix theory (see, e.g., \cite[Theorem 2.2]{voi-limit-law}, \cite[\S 5.5]{agz}) that $\lambda_{Z^{(\ell)}}$ converges almost surely to $\lambda_Z$.  We also have $\lambda_{X^{(\ell)}} \to \lambda_X$ in probability, and $\lambda_{P^{(\ell)}} \to \lambda_p$.  Because of the independence and unitary invariance of $X^{(\ell)}$ and $Z^{(\ell)}$, Voiculescu's asymptotic freeness theory \cite{voi-limit-law}, \cite{voi-asym-free} implies that $\lambda_{(X^{(\ell)},Z^{(\ell)},P^{(\ell)})} \to \lambda_{(X,Z,p)}$ in probability.  In particular, this implies that $\lambda_{\Pi^{(\ell)}(X_t^{(\ell)})} \to \lambda_{\Pi(X_t)}$ in probability.

Note that for any $t_0 > 0$, the matrix models $\Pi^{(N_\ell,M_\ell)}(X_{t_0}^{(\ell)})$ satisfy the hypotheses of Proposition \ref{prop:variational}.  The tail bound hypothesis (3) follows because $X^{(\ell)}$ is bounded in operator norm and because $Z^{(\ell)}$ satisfies these tail bounds using known concentration inequalities as explained in Remark \ref{rem:concentration}.  Thus, by Proposition \ref{prop:variational},
\[
\chi(\Pi(X_{t_0})) \geq \limsup_{\ell \to \infty} h^{(M_\ell)}\left(\Pi^{(N_\ell,M_\ell)}(X^{(\ell)}_{t_0})\right).
\]
By Lemma \ref{lem:classicalentropyinequality}, for any $0 < t_0 < t_1$, we have
\begin{equation} \label{eq:microstateestimate}
h^{(M_\ell)}\left(\Pi^{(N_\ell,M_\ell)}(X_{t_0}^{(\ell)})\right) - h^{(M_\ell)}\left(\Pi^{(N_\ell,M_\ell)}(X_{t_1}^{(\ell)})\right) \geq 
h^{(N_\ell)}\left(X_{t_0}^{(\ell)}\right) - h^{(N_\ell)}\left(X_{t_1}^{(\ell)}\right) - \frac{n}{2M_\ell} \log \frac{t_1}{t_0}.
\end{equation}
Using Lemma \ref{prob-standard}(iii) and \eqref{gauss-val}, we have
\begin{align*}
h^{(M_\ell)}\left(\Pi^{(N_\ell,M_\ell)}(X_{t_1}^{(\ell)})\right)
&=h^{(M_\ell)}\left(\Pi^{(N_\ell,M_\ell)}(X^{(\ell)}) + t_1^{1/2} \Pi^{(N_\ell,M_\ell)}(Z^{(\ell)})\right)\\
 &\geq h^{(M_\ell)}\left(t_1^{1/2} \Pi^{(N_\ell,M_\ell)}(Z^{(\ell)})\right) \\
&= \frac{n}{2} \log(2 \pi e t_1).
\end{align*}
and similarly
\[
h^{(N_\ell)}\left(X_{t_0}^{(\ell)}\right) \geq h^{(N_\ell)}\left(X^{(\ell)}\right).
\]
Finally, from Lemma \ref{prob-standard}(i) we have
$$
h^{(N_\ell)}\left(X_{t_1}^{(\ell)} \right) \leq \frac{n}{2} \log\left(2\pi e(\Var^{(N_\ell)}(X^{(\ell)})/n + t_1)\right).
$$
Substituting these estimates into \eqref{eq:microstateestimate} and collecting terms, we obtain
$$
h^{(M_\ell)}\left(\Pi^{(N_\ell,M_\ell)}(X_{t_0}^{(\ell)})\right) 
\geq h^{(N_\ell)}(X^{(\ell)}) - n \log\left[2\pi e\left(1 + \frac{\Var^{(N_\ell)}(X^{(\ell)})}{nt_1}\right)\right] - \frac{n}{2M_\ell} \log \frac{t_1}{t_0}.
$$
Taking the $\limsup$ as $\ell \to \infty$, we conclude
\[
\chi(\Pi(X_{t_0})) \geq \chi(X) - n \log\left[2\pi e\left(1 + \frac{\Var(X)}{nt_1}\right)\right],
\]
where $\Var(X) \coloneqq \sum_{j=1}^n \norm{X - \tau(X)}_\tau^2$, and we have observed that $\Var^{(N_\ell)}(X^{(\ell)}) \to \Var(X)$ because $X^{(\ell)}$ is bounded in operator norm and $\lambda_{X^{(\ell)}} \to \lambda_X$ in probability. Taking limits as $t_1 \to \infty$, we obtain
\[
\chi(\Pi(X_{t_0})) \geq \chi(X).
\]
Finally, note that $\Pi(X_{t_0})$ is bounded in operator norm by some constant $R'$ and converges in noncommutative law to $\Pi(X)$; hence, using the upper-semicontinuity of $\chi$ on $\Sigma_{n,R'}$ established in \cite[Proposition 2.6]{voi-entropy2}, we take the limit of the above inequality as $t_0 \to 0$ to conclude
\[
\chi(\Pi(X)) \geq \chi(X)
\]
as required.
\end{proof}

\end{document}